\theoremstyle{definition}
\newcounter{Polya}
\newtheorem{theo}{Theorem}[section]
\newtheorem{pro}[theo]{Proposition}
\newtheorem{lemma}[theo]{Lemma}
\newtheorem{cor}[theo]{Corollary}
\newtheorem{defi}[theo]{Definition}
\newtheorem{obs}[theo]{Observation}
\newtheorem{rema}[theo]{Remark}
\newtheorem{prp}[theo]{Properties}
\newtheorem{exam}[theo]{Example}
\newtheorem{prps}[theo]{Propiedades}
\newtheorem{obss}[theo]{Observations}
\newtheorem{ejes}[theo]{Ejemplos}
\newtheorem{nota}[theo]{Notation}
\newenvironment{prop}{\smallskip\begin{pro}}{\end{pro}\smallskip}
\newenvironment{coro}{\smallskip\begin{cor}}{\end{cor}\smallskip}
\newenvironment{den}{\smallskip\begin{defi}}{\end{defi}\smallskip}
\def\oo{\infty}
\def\ri{\rightarrow}
\def\a{\alpha}
\def\b{\beta}
\def\ep{\varepsilon}
\def\o{\omega}
\def\ga{\gamma}
\def\ro{\rho}
\def\bmu{\boldsymbol{\mu}}%\mathchar"0816
\def\bl{\boldsymbol{\ell}}
\def\Re{\mathop{\rm Re}\nolimits}
\def\N{\mathbb{N}}
\def\R{\mathbb{R}}
\def\C{\mathbb{C}}      %    "      "  "     "    complejos
\def\M{\mathbb{M}}
\def\H{\mathbb{H}}
\def\bM{\mathbb{M}}%sucesiones fuertemente regulares
\def\L{\mathbb{L}} %sucesiones fuertemente regulares
\def\A{\mathbb{A}} %sucesiones fuertemente regulares
\def\m{\boldsymbol{m}}
\def\h{\boldsymbol{h}}
\def\bm{\boldsymbol{m}}
\def\l{\boldsymbol{l}}%sucesiones fuertemente regulares
\def\no{\nonumber}
\begin{document}

\title{Strongly regular sequences and proximate orders}

\author{Javier Jim\'enez-Garrido and Javier Sanz}
\date{\today}

\maketitle

\abstract{Summability methods for ultraholomorphic classes in sectors, defined in terms of a strongly regular sequence $(M_p)_{p\in\N_0}$, have been put forward by A. Lastra, S. Malek and the second author~\cite{lastramaleksanz3}.
We study several open questions related to the existence of kernels of summability constructed by means of analytic proximate orders.
In particular, we give a simple condition that allows us to associate a proximate order with a strongly regular sequence. Under this assumption, and through the characterization of strongly regular sequences in terms of so-called regular variation, we show that the
growth index $\gamma(\M)$ defined by V.Thilliez~\cite{thilliez} and the order of quasianalyticity $\o(\M)$ introduced by the second author~\cite{SanzFlat} are the same.
}

\section{Introduction}

The study of the existence and meaning of formal power series solutions to differential equations has a long history, going back at least to the works of L. Euler in the 18th century. Although these solutions are frequently divergent, under fairly general conditions the rate of growth of their coefficients is not arbitrary. Indeed, a remarkable result of E. Maillet~\cite{Maillet} in 1903 states that any solution $\hat{f}=\sum_{p\ge 0}a_pz^p$ for an analytic differential equation will be of some Gevrey order, that is,
there exist $C,A,k>0$ such that $|a_p|\le CA^p(p!)^{1/k}$ for every $p\ge 0$.

These series turn out to be Gevrey asymptotic representations of actual solutions defined in suitable domains, and there is the possibility of reconstructing such analytic solutions from the formal ones by a process known as multisummability (in a sense, an iteration of a finite number of elementary $k-$summability procedures), developed in the 1980's by J.-P. Ramis, J. \'Ecalle, W. Balser et al.
This technique has been proven to apply successfully to a plethora of situations concerning the study of formal power series solutions at a singular point for linear and nonlinear (systems of) meromorphic ordinary differential equations in the complex domain (see, to cite but a few, the works~\cite{Balserutx,BalserBraaksmaRamisSibuya,Braaksma,MartinetRamis2,RamisSibuya}), for partial differential equations (for example, \cite{Balsermulti,BalserMiyake,Hibino,Malek3,ma2,Ouchi,taya}), as well as for singular perturbation problems (see~\cite{BalserMozo,CanalisMozoSchafke,lastramaleksanz2}, among others).

However, it is known that non-multisummable (in the previous sense) formal power series solutions may appear for different kinds of equations. G. K. Immink in~\cite{Immink,Immink2} has considered difference equations with formal power series solutions whose coefficients grow at a precise, intermediate rate between those of Gevrey classes, called $1^+$ level. She obtained reconstruction results for actual solutions by the consideration of specific kernels and integral transforms, very well suited for the problem studied.
Recently, S. Malek~\cite{Malek4} has studied some singularly perturbed small step size difference-differential nonlinear equations whose formal solutions with respect to the perturbation parameter can be decomposed as sums of two formal series, one with Gevrey order 1, the other of $1^+$ level, a phenomenon already observed for difference equations~\cite{BraaksmaFaberImmink}. In a different context, V. Thilliez~\cite{Thilliez2} has proven some stability results for algebraic equations whose coefficients belong to a general ultraholomorphic class defined by means of a so-called strongly regular sequence (comprising, but not limiting to, Gevrey classes), stating that the solutions will remain in the corresponding class. All these examples made it interesting for us to provide the tools for a general, common treatment of summability in ultraholomorphic classes in sectors, extending the powerful theory of $k-$summability.
The task, achieved by A. Lastra, S. Malek and the second author~\cite{lastramaleksanz3}, consisted in the construction of pairs of kernel functions with suitable asymptotic and growth properties, in terms of which to define formal and analytic Laplace- and Borel-like transforms which allow one to construct the sum of a summable formal power series in a direction. The main inspiration came from the theory of moment summability methods developed by W.~Balser in~\cite[Section\ 5.5]{Balserutx}, which had already found its application to the analysis of formal power series solutions of different classes of partial differential equations
(for example, by S. Malek~\cite{Malek2} and by S. Michalik~\cite{Michalik2}), and also for so-called moment-partial differential equations, introduced by W. Balser and Y. Yoshino~\cite{BalserYoshino} and subsequently studied by S. Michalik~\cite{Michalik3,Michalik4}.
Our technique has been applied in~\cite{lastramaleksanz3} to the study of the summability properties of some formal solutions for moment partial differential equations, generalizing the work in~\cite{Michalik3}, and in~\cite{lastramaleksanz4} to the asymptotic study of the solutions of a class of singularly perturbed partial differential equations in whose coefficients there appear sums of formal power series in this generalized sense. However, some questions remained unsolved in the construction of such generalized summability methods, and the present paper aims at providing some answers, as we proceed to describe.

The Carleman ultraholomorphic classes $\tilde{\mathcal{A}}_{\M}(G)$ we consider are those consisting of holomorphic functions $f$ admitting an asymptotic expansion $\hat f=\sum_{p\ge 0}a_pz^p/p!$ in a sectorial region $G$ with remainders suitably bounded in terms of a sequence $\M=(M_p)_{p\in\N_0}$ of positive real numbers (we write $f\sim_{\M}\hat f$ and $(a_p)_{p\in\N_0}\in\Lambda_{\M}$). The map sending $f$ to $(a_p)_{p\in\N_0}$ is the asymptotic Borel map $\tilde{\mathcal{B}}$.
See Subsection~\ref{subsectCarlemanclasses} for the precise definitions of all these classes and concepts.
In order to obtain good properties for these classes, the sequence $\M$ is usually subject to some standard conditions; in particular, we will mainly consider strongly regular sequences as defined by V. Thilliez~\cite{thilliez}, see Subsection~\ref{stronregseq}.
The best known example is that of Gevrey classes, appearing when the sequence is chosen to be $\bM_{\a}=(p!^{\a})_{p\in\N_{0}}$, $\a>0$,
and for which we use the notations $\tilde{\mathcal{A}}_{\a}(G)$, $\Lambda_{\a}$, $f\sim_{\a}\hat{f}$ and so on, for simplicity.
Let us denote by $G_\ga$ a sectorial region bisected by the direction $d=0$ and with opening $\pi\ga$. It is well known that $\tilde{\mathcal{B}}:\tilde{\mathcal{A}}_{\a}(G_\ga)\to\Lambda_{\a}$ is surjective if, and only if, $\ga\le\a$ (Borel--Ritt--Gevrey theorem, see~\cite{Ramis1,Ramis2}),
while it is injective 
if, and only if, $\ga>\a$ (Watson's lemma, see for example~\cite[Prop.\ 11]{Balserutx}).
The second author introduced in~\cite{SanzFlat} a constant $\omega(\M)\in(0,\infty)$, measuring the rate of growth of any strongly regular sequence $\M$,
in terms of which Watson's Lemma and Borel-Ritt-Gevrey theorem can be generalized in the framework of Carleman ultraholomorphic classes, as long as the associated function $d_{\M}(t)=\log(M(t))/\log t$, where
\begin{equation*}
M(t):=\sup_{p\in\N_{0}}\log\big(\frac{t^p}{M_{p}}\big)%=-\log\big(h_{\bM}(1/t)\big)
,\quad t>0%;\qquad M(0)=0
,
\end{equation*}
is a proximate order, a concept appearing in the classical theory of growth for holomorphic functions in sectorial regions, with contributions by E. Lindel\"of, G. Valiron, V. Bernstein, A. A. Gol'dberg, I. V. Ostrovskii (see \cite{Valiron42,Levin,GoldbergOstrowskii}) and, more recently, L. S. Maergoiz~\cite{Maergoiz}. Moreover, if this is the case flat functions are available in optimal sectors, and this is the crucial point for the success in putting forward a satisfactory summability theory. So, it becomes extremely important to characterize the fact that $d_{\M}$ is a proximate order in a simple way, and this is the main purpose of the present paper. The second author stated the following result in terms of the sequence of quotients $\bm=(m_p)_{p\in\N_0}$, where $m_p=M_{p+1}/M_p$.

\begin{prop}[\cite{SanzFlat},\ Prop.\ 4.9]\label{propcaracdderordenaprox0}
Let $\M$ be a strongly regular sequence. The following are equivalent:
\begin{itemize}
\item[(i)] $d_{\M}(t)$ is a proximate order,
%\item[(ii)] $\lim_{p\to\infty} m_pd'(m_p^+)\log(m_p)=0$,
\item[(ii)] $\displaystyle\lim_{p\to\infty}\frac{p}{M(m_{p})}=\frac{1}{\omega(\M)}$.
\end{itemize}
\end{prop}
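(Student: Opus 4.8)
The plan is to recast both conditions in the language of regularly varying functions, exploiting the integral representation of the associated function $M$ in terms of the quotient sequence $\bm$. I shall use two elementary facts. First, taking $M_0=1$, so that $\log M_p=\sum_{j<p}\log m_j$, and observing that the summand-difference of $q\mapsto q\log m_p-\log M_q$ is $\log m_p-\log m_q$, which is $\ge 0$ exactly for $q\le p$ (as $(\log m_j)_j$ is nondecreasing), the supremum defining $M(m_p)$ is attained at $q=p$, whence the exact identity
\begin{equation*}
M(m_p)=p\log m_p-\log M_p=\sum_{j=0}^{p-1}\log\frac{m_p}{m_j},\qquad p\in\N .
\end{equation*}
Second, writing $\nu(t)=\#\{j\in\N_0:m_j\le t\}$ for the (nondecreasing) counting function of $\bm$, which tends to $\oo$ because $m_j\to\oo$, an interchange of sum and integral gives $M(t)=\int_0^t\nu(s)\,\frac{ds}{s}$; thus $M$ is locally absolutely continuous, positive for large $t$, and $(\log M)'(t)=\frac1t\cdot\frac{\nu(t)}{M(t)}$ for almost every $t$. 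Besides these I take for granted, as provided elsewhere (see~\cite{SanzFlat} and the theory of proximate orders recalled above): (a) $d_{\M}$ is a proximate order if and only if $M$ is a regularly varying function — one implication because $r\mapsto r^{\sigma(r)}$ is regularly varying of index $\lim_{r\to\oo}\sigma(r)$ for every proximate order $\sigma$, the other through Maergoiz's construction~\cite{Maergoiz} of an analytic proximate order equivalent to a prescribed regularly varying function; and (b) the characterization $\omega(\M)=\liminf_{p\to\oo}\log(m_p)/\log p$, with $\omega(\M)\in(0,\oo)$.

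Assume (i). By (a), $M$ is regularly varying of some index $\rho>0$. For $\lambda>1$, monotonicity of $\nu$ yields $(\log\lambda)\,\nu(t)\le\int_t^{\lambda t}\nu(s)\,\frac{ds}{s}=M(\lambda t)-M(t)\le(\log\lambda)\,\nu(\lambda t)$; dividing by $M(t)$, letting $t\to\oo$ (so $M(\lambda t)/M(t)\to\lambda^{\rho}$, and likewise $M(t/\lambda)/M(t)\to\lambda^{-\rho}$ after replacing $t$ by $t/\lambda$ in the lower bound), and then letting $\lambda\to1^{+}$, one obtains $\nu(t)/M(t)\to\rho$, i.e.\ $M(t)\sim\nu(t)/\rho$. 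In particular $\nu$, being asymptotic to the regularly varying function $\rho M$, is regularly varying of index $\rho$, hence its generalized inverse — which up to values at jump points coincides with the map $p\mapsto m_p$ — is regularly varying of index $1/\rho$; thus $\log(m_p)/\log p\to1/\rho$, so $\omega(\M)=1/\rho$ by (b), and moreover $m_{\lfloor(1+\varepsilon)p\rfloor}/m_p\to(1+\varepsilon)^{1/\rho}>1$ for every $\varepsilon>0$, so that the maximal constancy stretches of $\bm$ have length $o(p)$ and consequently $\nu(m_p)=p+1+o(p)\sim p$. Combining, $M(m_p)\sim\nu(m_p)/\rho\sim\omega(\M)\,p$, which is (ii).

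Conversely, assume (ii), i.e.\ $M(m_p)/p\to\omega(\M)$. When $m_{p-1}\le t<m_p$ one has $\nu(t)=p$ and, by monotonicity of $M$, $M(m_{p-1})\le M(t)\le M(m_p)$; since (ii) gives $M(m_{p-1})\sim\omega(\M)(p-1)\sim\omega(\M)p$ and $M(m_p)\sim\omega(\M)p$, and since the intervals $[m_{p-1},m_p)$ cover $[m_0,\oo)$ while $p=\nu(t)\to\oo$ as $t\to\oo$, it follows that $M(t)\sim\omega(\M)\,\nu(t)$, i.e.\ $\nu(t)/M(t)\to1/\omega(\M)$. Integrating $(\log M)'$ then gives, for each $\lambda>0$,
\begin{equation*}
\frac{M(\lambda t)}{M(t)}=\exp\!\left(\int_t^{\lambda t}\frac1s\cdot\frac{\nu(s)}{M(s)}\,ds\right)\;\longrightarrow\;\exp\!\left(\frac{\log\lambda}{\omega(\M)}\right)=\lambda^{1/\omega(\M)}\quad(t\to\oo),
\end{equation*}
so $M$ is regularly varying of index $1/\omega(\M)$, and (i) follows from (a).

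I expect the only genuinely delicate point to be the justification of fact (a): since $M$ is merely piecewise log-affine, $d_{\M}$ is nowhere differentiable, so the classical definition of a proximate order cannot be verified directly and one must instead pass, via Maergoiz's theory, to an equivalent analytic proximate order attached to the regularly varying function $M$ — precisely the kind of statement the regular-variation part of the paper is meant to furnish. Everything else reduces to bookkeeping between $\bm$, its counting function $\nu$ and $M$, the only care being the treatment of possible equalities among consecutive quotients $m_p$, which once $\bm$ is known to be regularly varying are controlled by the negligible length of the constancy stretches of $\bm$.
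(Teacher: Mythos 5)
Your proof is correct in substance but follows a genuinely different route from the paper's. The paper proves the statement through the chain (i) $\Leftrightarrow$ $\lim_{t\ri\oo}d_{\M}(t)=1/\o(\M)$ $\Leftrightarrow$ $\lim_{p\ri\oo}\log(m_p)/\log(p)=\o(\M)$ $\Leftrightarrow$ (ii) (Theorem~\ref{theorem.condition3.implies.beta.converge}); the hard links there rest on Petzsche's structure theorem for (snq) sequences (used in Lemma~\ref{lemmaliminfcierto} to build an equivalent sequence $\L$ with $\log L(\ell_p)\sim\log p$) and on Moricz's Tauberian theorem for logarithmic summability, applied to the bounded sequence $\b_p=M(m_p)/p$, while the passage between condition (\ref{OA4:1}) and the behaviour of $\nu(t)/M(t)$ is done via the explicit identity $t\,d_{\M}'(t)\log t=\nu(t)/M(t)-d_{\M}(t)$. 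You instead pivot entirely on the equivalence between ``$d_{\M}$ is a proximate order'' and ``$M$ is regularly varying,'' plus two standard pieces of Karamata theory: the monotone density argument applied to $M(t)=\int_0^t\nu(s)\,ds/s$ (giving $\nu/M\to\rho$) and the inverse-function theorem for regularly varying functions (giving regular variation of $\bm$, hence $\nu(m_p)\sim p$ and $\log(m_p)/\log p\to 1/\rho$). This bypasses Petzsche and Moricz completely and yields the regular variation of $\bm$ (the paper's Theorem~\ref{teocaractsucfrbyregvar}) as a byproduct rather than a consequence; the price is that you lean on the regular-variation toolbox and do not isolate the easily checkable intermediate criterion $\lim_p\log(m_p)/\log(p)=\o(\M)$, which is the paper's main practical output.

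Two points need attention. First, the direction of your fact (a) that the converse actually uses — regular variation of $M$ implies that $d_{\M}$ \emph{itself}, not merely some equivalent function, is a proximate order — is not what Maergoiz's construction provides (that gives an equivalent analytic proximate order). The correct justification is precisely the derivative identity above combined with the monotone-density computation you already perform in the forward direction: once $\nu(t)/M(t)\to1/\o(\M)$ and $d_{\M}(t)\to1/\o(\M)$ are known, condition (\ref{OA4:1}) follows at once, so you could close the converse without invoking (a) at all — which is in effect what the paper does in Proposition~\ref{propcaracdderordenaprox}. Second, you assert $\rho>0$ without proof; this follows from \eqref{equaordeMdet}, since regular variation of $M$ forces $\lim_t\log M(t)/\log t=\rho$, whence $\rho=\limsup_t\log M(t)/\log t=1/\o(\M)\in(0,\oo)$ — and noting this also identifies $\rho=1/\o(\M)$ directly, shortening your detour through the generalized inverse (which you still need, however, to get $\nu(m_p)\sim p$).
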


\noindent The argument rested on the following statement.

\begin{theo}[\cite{SanzFlat},\ Th.\ 2.14]%\label{teorordeMdetiguallimidder}
Let $\M$ be strongly regular. Then,
\begin{equation}\label{equaordeMdet1}
\lim_{r\to\infty}d_{\M}(t)=\frac{1}{\omega(\M)}.
\end{equation}
\end{theo}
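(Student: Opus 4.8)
The plan is to pass from the associated function $M(t)$ to the sequence of quotients $\bm$ via Legendre--type duality, and then to obtain matching upper and lower bounds for $d_{\M}(t)$ as $t\to\infty$. Since $\M$ is in particular logarithmically convex, $\log M_p=\sum_{j=0}^{p-1}\log m_j$ is a convex function of $p$ growing faster than linearly and $(m_p)$ is non-decreasing; hence $M(t)=\sup_{p\in\N_0}\bigl(p\log t-\log M_p\bigr)$ is the Legendre conjugate of $p\mapsto\log M_p$ in the variable $\log t$, and equivalently
\begin{equation*}
M(t)=\int_{m_0}^{t}\frac{\nu_{\bm}(s)}{s}\,ds\quad(t\ge m_0),\qquad \nu_{\bm}(s):=\#\{p\in\N_0:\ m_p\le s\},
\end{equation*}
with $M(t)=0$ for $t<m_0$, where $\nu_{\bm}$ is the generalized inverse of $p\mapsto m_p$. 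I would use the $\sup$-formula to insert well chosen test indices and the integral formula for the averaging it provides, together with the elementary consequence $M(\lambda t)\ge \nu_{\bm}(t)\log\lambda$ for $\lambda>1$ and $t\ge m_0$.

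The easy half is $\limsup_{t\to\infty}d_{\M}(t)\le 1/\omega(\M)$. By the definition of $\omega(\M)$, for every $\ep>0$ one has $m_p\ge p^{\,\omega(\M)-\ep}$ for all large $p$, so $\log M_p=\sum_{j<p}\log m_j\ge(\omega(\M)-\ep)\,p\log p$ up to a term of lower order in $p$. Substituting this into the $\sup$-formula and optimizing over $p$ (the optimal index has size $t^{1/(\omega(\M)-\ep)}$) gives $M(t)\le C_\ep\,t^{1/(\omega(\M)-\ep)}$ for large $t$, hence $d_{\M}(t)\le \bigl(\omega(\M)-\ep\bigr)^{-1}+o(1)$; letting $\ep\downarrow0$ closes this half, and it uses only the definition of $\omega(\M)$.

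The substantial half is $\liminf_{t\to\infty}d_{\M}(t)\ge 1/\omega(\M)$, i.e.\ that for every $\ep>0$ one has $M(t)\ge t^{1/(\omega(\M)+\ep)}$ for all large $t$. Via the integral formula this reduces, up to the harmless factor $\log\lambda$, to $\nu_{\bm}(t)\ge t^{1/(\omega(\M)+\ep)}$ for large $t$, that is, $m_p\le p^{\,\omega(\M)+\ep}$ for all large $p$. The lower limit defining $\omega(\M)$ only furnishes infinitely many such indices; to promote this to all large indices one must invoke the two remaining defining properties of a strongly regular sequence. Moderate growth bounds, for each $p$, the excess of $\log m_p$ over the running average $\frac1p\sum_{j<p}\log m_j$ (this excess is at most $2\log C$, with $C$ the moderate-growth constant), which is precisely what lets one pass from a bound on $\log M_p$ to a bound on $\log m_p$ and back; strong non-quasianalyticity forbids $\log m_p/\log p$ from remaining above its lower limit over long stretches, measured multiplicatively in $p$ — equivalently, it forbids $\nu_{\bm}$ from having long intervals of slow growth near a given level. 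Combining the two one propagates the inequality $m_p\lesssim p^{\,\omega(\M)+\ep}$ forward from the ``good'' indices to all large $p$, and feeding the resulting index $p\sim t^{1/(\omega(\M)+\ep)}$ into the $\sup$-formula yields the lower bound for $M(t)$, hence for $d_{\M}(t)$.

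The main obstacle is exactly this last step. Moderate growth by itself and strong non-quasianalyticity by itself only control $\limsup_{p}\log m_p/\log p$ from above and from below, respectively, in terms of the constants appearing in those conditions; one has to use them jointly — and the logarithmic convexity, so that these averaged statements are genuinely statements about $m_p$ — in order to pin that limit superior down to $\omega(\M)$. Everything else in the argument is bookkeeping with Legendre duality and a one-parameter optimization.
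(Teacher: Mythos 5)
Your first half ($\limsup_{t\to\infty}d_{\M}(t)\le 1/\omega(\M)$) is correct and, combined with the counting--function identity, recovers what the paper actually proves, namely the $\limsup$ version $\rho[M]=\limsup_{t\to\infty}\log M(t)/\log t=1/\omega(\M)$ of \eqref{equaordeMdet}. The genuine gap is exactly at the point you yourself flag as ``the main obstacle'', and it cannot be closed by the means you invoke. To get $\liminf_{t\to\infty}d_{\M}(t)\ge 1/\omega(\M)$ you need $m_p\le p^{\omega(\M)+\ep}$ for \emph{all} large $p$, i.e.\ $\limsup_{p\to\infty}\log(m_p)/\log(p)\le\omega(\M)$, whereas by Corollary~\ref{coroOrderM} the constant $\omega(\M)$ is only the \emph{lower} limit of $\log(m_p)/\log(p)$. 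The quantitative content of the remaining axioms is: (lc)+(mg) give $m_{2p}\le C\,m_p$ for some constant $C$ (Proposition~\ref{propPropiedlcmg}(ii.2.c)), equivalently a uniform bound on $\log(m_p/M_p^{1/p})$; and (snq) gives $\liminf_{p}m_{kp}/m_p>1$ for some $k$ (Proposition~\ref{petzche11}). The first caps the increase of $\log m$ over a dyadic block by the fixed amount $\log C$, so propagating forward from a good index $p_0$ through $j$ dyadic blocks yields only $\log m_q/\log q\le\bigl(j\log C+(\omega(\M)+\ep)\log p_0\bigr)/\bigl(j\log 2+\log p_0\bigr)$; if the good indices are sparse this tends to $\log C/\log 2$, a bound governed by the (mg) constant rather than by $\ep$. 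The (snq) condition is a \emph{lower} bound on the growth of $\bm$ (it prevents the counting function $\nu$ from growing too fast, not too slowly in the direction you need), so it cannot cap the $\limsup$ either. Nothing in the three axioms, jointly or separately, forces the upper and lower limits of $\log(m_p)/\log(p)$ to coincide.

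This is not an oversight you could repair with more care: the paper quotes this statement from \cite{SanzFlat} precisely in order to retract its proof, and replaces it by the $\limsup$ assertion only. Theorem~\ref{theorem.condition3.implies.beta.converge} then shows that the full limit \eqref{equaordeMdet1} is \emph{equivalent} to $\lim_{p\to\infty}\log(m_p)/\log(p)=\omega(\M)$ and to $d_{\M}$ being a proximate order, and the paper states explicitly that the validity of that condition for every strongly regular sequence is an open problem. So the propagation step you describe is not bookkeeping; it is the entire unproved content of the statement, and any correct argument for it would have to supply input genuinely beyond (lc), (mg) and (snq) as you have used them.
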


Regrettably, the proof of this last result was not correct, as it was indicated to us by G. Schindl in a private communication calling our attention to some miscalculation. Although the limit in \eqref{equaordeMdet1} does exist in every example we have studied, in general we may only guarantee that $\limsup_{r\to\infty}d_{\M}(t)=1/\omega(\M)$. However, a careful analysis has shown that not only Proposition~\ref{propcaracdderordenaprox0} holds, but $d_{\M}(t)$ being a proximate order is also equivalent to the verification of \eqref{equaordeMdet1}, and even to the fact that
$$
\lim_{p\ri \oo} \frac{\log (m_p)}{\log(p)}=\o(\M).
$$
This last condition is really easy to verify in concrete examples, and it turns out that all the interesting examples of strongly regular sequences appearing in the literature satisfy it. In the proof of these characterizations, a deep result of H.-J. Petzsche~\cite{Pet} on equivalent sequences (see Proposition~\ref{petzche11} in this paper) and a very recent criterion by F. Moricz~\cite{Moricz} for the convergence of a sequence summable by Riesz means (here, Theorem~\ref{theoMoricz}) play a prominent role.

A different problem concerns the growth index $\gamma(\M)$, introduced by V. Thilliez~\cite{thilliez} for every strongly regular sequence. For the standard instances appearing in the literature, the values of the constants $\ga(\M)$ and $\omega(\M)$ agree, but in~\cite{SanzFlat} only the inequality $\ga(\M)\le \omega(\M)$ could be proved in general. In the last section we relate strong regularity to the property of regular variation of the sequence of quotients, what allows us to conclude that both indices coincide whenever $d_{\M}$ is a proximate order. We remark that the appearance of the concept of regular variation in this respect is not unnatural. On one hand, for every proximate order $\rho(t)$ the function $t^{\rho(t)}$ turns out to be regularly varying.
On the other hand, some arguments in the study of ultradifferentiable classes rely on this kind of ideas, see~\cite{bonetmeisemelikhov} as an example.

\section{Preliminaries}

\subsection{Notations}
We set $\N:=\{1,2,...\}$, $\N_{0}:=\N\cup\{0\}$.
$\mathcal{R}$ stands for the Riemann surface of the logarithm, and
$\C[[z]]$ is the space of formal power series in $z$ with complex coefficients.\par\noindent
For $\gamma>0$, we consider unbounded sectors
$$S_{\gamma}:=\{z\in\mathcal{R}:|\hbox{arg}(z)|<\frac{\gamma\,\pi}{2}\}$$
or, in general, bounded or unbounded sectors
$$S(d,\gamma,r):=\{z\in\mathcal{R}:|\hbox{arg}(z)-d|<\frac{\gamma\,\pi}{2},\ |z|<r\},\quad
S(d,\gamma):=\{z\in\mathcal{R}:|\hbox{arg}(z)-d|<\frac{\gamma\,\pi}{2}\}$$
with bisecting direction $d\in\R$, opening $\gamma \,\pi$ and (in the first case) radius $r\in(0,\infty)$.\par\noindent

A sectorial region $G(d,\gamma)$ with bisecting direction $d\in\R$ and opening $\gamma\,\pi$ will be a domain in $\mathcal{R}$ such that $G(d,\gamma)\subset S(d,\gamma)$, and for every $\beta\in(0,\gamma)$ there exists $\rho=\rho(\beta)>0$ with $S(d,\beta,\rho)\subset G(d,\gamma)$.
In particular, sectors are sectorial regions.\par\noindent

A sector $T$ is a bounded proper subsector of a sectorial region $G$ (denoted by $T\ll G$) whenever the radius of $T$ is finite and $\overline{T}\setminus\{0\}\subset G$.
\par\noindent

$\mathcal{H}(U)$ denotes the space of holomorphic functions in an open set $U\subset\mathcal{R}$.\par\noindent

\subsection{Strongly regular sequences}\label{stronregseq}

Most of the information in this subsection is taken from the works of
%A. A. Goldberg and I. V. Ostrovskii~\cite{GoldbergOstrowskii},
H. Komatsu~\cite{komatsu}, H.-J. Petzsche~\cite{Pet}, H.-J. Petzsche and D. Vogt~\cite{petvog} and V. Thilliez~\cite{thilliez},
which we refer to for further details and proofs. In what follows, $\M=(M_p)_{p\geq 0}$ always stands for
a sequence of positive real numbers, and we always assume that $M_0=1$. There is no general agreement in the literature on
the terminology describing the forthcoming properties. For the convenience of the reader, we have used the names given by
V.~Thilliez and the descriptive shortcuts employed by G. Schindl~\cite{Schindl}.

\begin{defi}  We say $\M$ is {\it strongly regular} if the following hold:
\begin{itemize}
\item[(i)]  $\M$ is \textit{logarithmically convex} (for short, (lc)), that is,
$$M_{p}^{2}\le M_{p-1}M_{p+1},\qquad p\in\N.
$$
%\item[(ii)]  $\bM$ is \textit{derivation closed} (for short, (dc)) if there exists $A>0$ such that
%$$M_{p+1}\le A^{p+1}M_{p},\qquad p\in\N_0.$$
\item[(ii)]  $\M$ is of \textit{moderate growth} (briefly, (mg)), i.e., there exists $A>0$ such that
$$M_{p+q}\le A^{p+q}M_{p}M_{q},\qquad p,q\in\N_0.$$
%\item[(iv)]  $\bM$ satisfies the \textit{non-quasianalyticity condition} (briefly, (nq)) if
%$$
%\sum_{q\ge 0}\frac{M_{q}}{(q+1)M_{q+1}}<\infty.
%$$
\item[(iii)]  $\bM$ satisfies the \textit{strong non-quasianalyticity condition} (for short, (snq)), that is, there exists $B>0$ such that
$$
\sum_{q\ge p}\frac{M_{q}}{(q+1)M_{q+1}}\le B\frac{M_{p}}{M_{p+1}},\qquad p\in\N_0.
$$
\end{itemize}
\end{defi}

\begin{defi} For a sequence $\M$ we define {\it the sequence of quotients} $\m=(m_p)_{p\in\N_0}$ by $$m_p:=\frac{M_{p+1}}{M_p} \qquad p\in \N_0.$$
\end{defi}

\begin{rema}\label{remaequivMm}
The properties (lc) and (snq) can be easily stated in terms of the sequence of quotients, and we will see in Proposition~\ref{propPropiedlcmg} that the same holds for (mg) as long as the given sequence $\M$ is (lc). Moreover,
observe that for every $p\in\N$ one has
\begin{equation}\label{eqMfromm}
M_p=\frac{M_p}{M_{p-1}}\frac{M_{p-1}}{M_{p-2}}\dots\frac{M_2}{M_{1}}\frac{M_1}{M_{0}}= m_{p-1}m_{p-2}\dots m_1m_0.
\end{equation}
So, one may recover the sequence $\M$ (with $M_0=1$) once $\bm$ is known, and hence the knowledge of one of the sequences amounts to that of the other.
Sequences of quotients of sequences $\M$, $\L$, etc. will be denoted by lowercase letters $\bm$, $\bl$ and so on. Whenever some statement
refers to a sequence denoted by a lowercase letter such as $\bm$, it will be understood that we are dealing with a sequence of quotients (of the sequence $\M$ given by \eqref{eqMfromm}).
\end{rema}

The following properties are easy consequences of the definitions, except for (ii.2), which is due to H.-J. Petzsche and D. Vogt~\cite{petvog}.

\begin{prop}\label{propPropiedlcmg}
Let $\M=(M_p)_{p\in\N_0}$ be a sequence. Then, we have:
\begin{itemize}
\item[(i)] $\M$ is (lc) if, and only if, $\bm$ is nondecreasing. If, moreover, $\M$ satisfies (snq) then $\bm$ tends to infinity.
\item[(ii)] Suppose from now on that $\M$ is (lc). Then
\begin{itemize}
\item[(ii.1)]  $(M_p^{1/p})_{p\in\N}$ is nondecreasing, and $M_p^{1/p}\leq m_{p-1}$ for every $p\in\N$.\par % (hence, $M_p\le m_p^p$ for every $p\in\N_0$).
    Moreover, $\lim_{p\to\infty}m_p=\infty$ if, and only if, $\lim_{p\to\infty}M_p^{1/p}=\infty$.
%\item[(ii.2)] $M_pM_q\leq M_{p+q}$ for every $p,q\in\N_0$.
\item[(ii.2)] The following statements are equivalent:
\begin{itemize}
  \item[(ii.2.a)]  $\M$ is (mg),
  %$$\sup_{p,p=r+s} \left( \frac{M_{p+l}}{M_pM_l}\right)^{1/(p+l)}<\oo.$$
  \item[(ii.2.b)] $\sup_{p\in\N} \frac{m_p}{M^{1/p}_p} <\infty$,
  \item[(ii.2.c)]  $\sup_{p\in\N_0} \frac{m_{2p}}{m_{p}}<\infty$,
  \item[(ii.2.d)] $\sup_{p\in\N} \left(\frac{M_{2p}}{M_p^2}\right)^{1/p}<\infty$.
 \end{itemize}
\item[(ii.3)] If  $\M$ is (mg) and $A>0$ is the corresponding constant, then
\begin{equation}\label{eqmg_mppMp}
m_p^p\le A^{2p}M_p,\quad p\in\N_0.
\end{equation}
\end{itemize}
\end{itemize}
\end{prop}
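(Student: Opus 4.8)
The plan is to handle the four items in turn, translating everything into the sequence of quotients by means of the identity \eqref{eqMfromm}, $M_p=m_0m_1\cdots m_{p-1}$. For (i), dividing the inequality (lc), $M_p^2\le M_{p-1}M_{p+1}$, by $M_{p-1}M_p$ gives exactly $m_{p-1}\le m_p$, so (lc) is tautologically the statement that $\bm$ is nondecreasing. For the second assertion in (i), rewrite (snq) in quotient form: since $M_q/((q+1)M_{q+1})=1/((q+1)m_q)$ and $M_p/M_{p+1}=1/m_p$, condition (snq) reads $\sum_{q\ge p}1/((q+1)m_q)\le B/m_p$. If $\bm$ were bounded by some $L$, the left-hand side would dominate $L^{-1}\sum_{q\ge p}1/(q+1)=+\infty$; thus $\bm$ is unbounded, and being nondecreasing it tends to infinity.

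For (ii.1) I would again use $M_p=m_0\cdots m_{p-1}$. Monotonicity of $\bm$ gives at once $M_p\le m_{p-1}^{\,p}$, i.e.\ $M_p^{1/p}\le m_{p-1}$. To see that $(M_p^{1/p})_p$ is nondecreasing it suffices to check $M_p^{\,p+1}\le M_{p+1}^{\,p}$; writing $M_{p+1}=M_pm_p$ and $M_p=\prod_{j<p}m_j\le m_p^{\,p}$ one gets $M_p^{\,p+1}=M_p^{\,p}M_p\le M_p^{\,p}m_p^{\,p}=M_{p+1}^{\,p}$. Finally, $M_p^{1/p}\le m_{p-1}$ makes ``$M_p^{1/p}\to\infty\Rightarrow m_p\to\infty$'' immediate, while the converse follows because $\log M_p^{1/p}=\tfrac1p\sum_{j=0}^{p-1}\log m_j$ is the Ces\`aro mean of $(\log m_j)_j$, which tends to $+\infty$ whenever $\log m_j\to+\infty$.

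Item (ii.3) comes from combining (mg) with monotonicity: the case $q=p$ of (mg) gives $M_{2p}\le A^{2p}M_p^2$, while $M_{2p}=M_p\prod_{j=p}^{2p-1}m_j\ge M_p\,m_p^{\,p}$ because $m_j\ge m_p$ for $j\ge p$; comparing the two yields $m_p^{\,p}\le A^{2p}M_p$. The same sandwiching trick, combined with the monotonicity of $(M_p^{1/p})_p$ from (ii.1), covers the ``elementary direction'' of (ii.2): (a)$\Rightarrow$(d) is again $q=p$ in (mg); (a)$\Rightarrow$(b) is exactly (ii.3) rewritten as $m_p\le A^2M_p^{1/p}$; and (d)$\Rightarrow$(b) follows from $M_{2(p+1)}\ge M_{p+1}m_p^{\,p+1}$ together with $M_{2(p+1)}\le K^{p+1}M_{p+1}^2$. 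For (b)$\Rightarrow$(c), put $\mu_j:=\log m_j$, so that (b) reads $\tfrac1p\sum_{j<p}(\mu_p-\mu_j)\le C$; evaluating this at $p=2q$ and throwing away the (nonnegative) terms with $j\ge q$ gives $q(\mu_{2q}-\mu_q)\le\sum_{j<q}(\mu_{2q}-\mu_j)\le 2Cq$, i.e.\ $\mu_{2q}-\mu_q\le 2C$, which is (c).

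The step I expect to be the real obstacle is closing the loop, i.e.\ (c)$\Rightarrow$(a) (equivalently, deducing (mg) from the dyadic bound $\mu_{2p}-\mu_p\le D$); this is the part for which one may prefer simply to invoke the Petzsche--Vogt theorem, citing~\cite{petvog}. If one wants to prove it, after forming $\prod_{i<p}m_{q+i}/m_i$ the claim (mg) for a pair $p\le q$ becomes $\sum_{i=0}^{p-1}(\mu_{q+i}-\mu_i)\le\mathrm{const}\cdot(p+q)$, and the point is that a naive term-by-term estimate only yields a bound of order $(p+q)\log(p+q)$. The remedy is a dyadic split of the range $0\le i<p$ into blocks $q/2^{k+1}<i\le q/2^k$: on such a block $q+i<2q\le 2^{k+2}i$, so iterating (c) gives $\mu_{q+i}-\mu_i\le(k+2)D$, and since the $k$-th block contains at most $q/2^{k+1}+1$ integers the whole sum is at most $qD\sum_{k\ge0}(k+2)2^{-(k+1)}$ plus lower-order terms, that is $O(q)=O(p+q)$. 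The convergence of $\sum_{k\ge0}(k+2)2^{-(k+1)}$ is exactly what makes the constant independent of $p$ and $q$, and hence what turns the dyadic hypothesis (c) into the genuinely multiplicative statement (mg).
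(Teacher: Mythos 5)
Your proposal is correct, but note that the paper itself offers no proof of this proposition: it declares items (i), (ii.1) and (ii.3) to be easy consequences of the definitions and attributes the equivalences in (ii.2) to Petzsche and Vogt~\cite{petvog}, so the only fair comparison is with that reference. Your arguments for (i), (ii.1), (ii.3) and for the implications (a)$\Rightarrow$(d), (a)$\Rightarrow$(b), (d)$\Rightarrow$(b), (b)$\Rightarrow$(c) are the standard ones and are all sound: the translation of (lc) and (snq) into the quotient sequence, the Ces\`aro-mean observation for (ii.1), and the sandwich $M_p m_p^p\le M_{2p}\le A^{2p}M_p^2$ for (ii.3) are exactly what one expects. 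The genuine added value is your self-contained treatment of (c)$\Rightarrow$(a), which the paper simply outsources. Your dyadic decomposition works: on the block $q/2^{k+1}<i\le q/2^k$ one has $q+i<2q\le 2^{k+2}i$, so iterating the hypothesis $\mu_{2j}-\mu_j\le D$ gives $\mu_{q+i}-\mu_i\le (k+2)D$, and since only the blocks with $k\le\log_2 q$ are nonempty, the stray $+1$ per block contributes only $O(D\log^2 q)$, which your phrase \emph{plus lower-order terms} covers; the $i=0$ term $\mu_q-\mu_0=O(D\log q)$ also needs to be (and implicitly is) absorbed there. Since $\sum_{k\ge 0}(k+2)2^{-(k+1)}=3<\infty$, the total is $O(q)=O(p+q)$, which is precisely (mg) after exponentiating $M_{p+q}/(M_pM_q)=\prod_{i=0}^{p-1}m_{q+i}/m_i$. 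So your proof is complete where the paper merely cites, and it correctly identifies the one nontrivial step.
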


In the next definitions and results we take into account the conventions adopted in Remark~\ref{remaequivMm}.

\begin{defi} Let $\M=(M_p)_{p\in\N_0}$ and $\L=(L_p)_{p\in\N_0}$ be sequences, we say that {\it $\M$ is equivalent to $\L$},
and we write $\M\approx\L$, if there exist $C,D>0$ such that
 $$D^p L_p \leq M_p \leq C^p L_p, \qquad\, p \in \N_0.$$
\end{defi}

\begin{defi} Let $\bm=(m_p)_{p\in\N_0}$ and $\bl=(\ell_p)_{p\in\N_0}$ be sequences,
we say that {\it $\bm$ is equivalent to $\bl$}, and we write $\bm\simeq\bl$, if there exist $c,d>0$ such that
 $$d  \ell_p \leq m_p \leq c \ell_p, \qquad p \in \N_0.$$
\end{defi}

The following statements are straightforward.

\begin{prop}\label{propRelacionOrdenes}
Let $\M$ and $\L$ be sequences.
\begin{itemize}
\item[(i)] If  $\bm\simeq\bl$ then  $\M\approx\L$.
\item[(ii)] If $\M$ and $\L$ are (lc) and one of them is (mg), then $\M\approx\L$ amounts to $\bm\simeq\bl$.
In particular, for strongly regular sequences one may interchange $\simeq$ and $\approx$.
\end{itemize}
\end{prop}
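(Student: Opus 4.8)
The plan is to prove both parts by unwinding the definitions of the equivalences $\simeq$ and $\approx$ through the product formula \eqref{eqMfromm}, together with the bound \eqref{eqmg_mppMp} from Proposition~\ref{propPropiedlcmg}. For part (i), suppose $\bm\simeq\bl$, so that there are $c,d>0$ with $d\,\ell_p\le m_p\le c\,\ell_p$ for all $p\in\N_0$. First I would take the product of these inequalities over $j=0,1,\dots,p-1$ and use \eqref{eqMfromm}, which gives $M_p=m_{p-1}\cdots m_1m_0$ and $L_p=\ell_{p-1}\cdots\ell_1\ell_0$; this immediately yields $d^p L_p\le M_p\le c^p L_p$ for $p\in\N$, and the case $p=0$ is trivial since $M_0=L_0=1$. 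Hence $\M\approx\L$ with constants $C=c$, $D=d$. This direction uses nothing about (lc) or (mg).

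For part (ii), assume $\M$ and $\L$ are (lc), that (say) $\M$ is (mg), and that $\M\approx\L$, i.e.\ $D^pL_p\le M_p\le C^pL_p$ for all $p\in\N_0$. We must produce $c,d>0$ with $d\,\ell_p\le m_p\le c\,\ell_p$. The key observation is that $m_p=M_{p+1}/M_p$ and $\ell_p=L_{p+1}/L_p$, so dividing the $\approx$-inequality at index $p+1$ by the one at index $p$ controls the ratio $m_p/\ell_p$ only up to the factors $C^{p+1}/D^p$ and $D^{p+1}/C^p$, which are not bounded. To remove the exponential slack one must exploit that both sequences are (lc) — so $\bm$ and $\bl$ are nondecreasing by Proposition~\ref{propPropiedlcmg}(i) — and that $\M$ is (mg). The route I would take: from (mg) and \eqref{eqmg_mppMp} we have $m_p^p\le A^{2p}M_p$; combined with the trivial lower bound $M_p\le m_p^p$ (valid since $\bm$ is nondecreasing, giving $M_p=m_{p-1}\cdots m_0\le m_{p-1}^p\le m_p^p$, and similarly $M_p=m_{p-1}\cdots m_0\ge m_0^p$ is too crude, so instead use $M_p^{1/p}\le m_{p-1}\le m_p$ from Proposition~\ref{propPropiedlcmg}(ii.1)). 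Thus $M_p^{1/p}$ and $m_p$ are comparable up to the constant $A^2$: precisely $M_p^{1/p}\le m_p\le$ [something]$\cdot M_p^{1/p}$. Wait — the clean statement from (ii.1) and (ii.2.b) is $M_p^{1/p}\le m_p$ and $\sup_p m_p/M_p^{1/p}=:E<\infty$, so $M_p^{1/p}\le m_p\le E\,M_p^{1/p}$. The same holds for $\L$ once we know $\L$ is (mg), and here is where I would invoke Proposition~\ref{propRelacionOrdenes}-type reasoning or rather: (mg) is preserved under $\approx$, which follows directly from the definitions (plug $D^pL_p\le M_p\le C^pL_p$ into the (mg) inequality for $\M$). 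So $\L$ is (mg) with its own constant, giving $L_p^{1/p}\le \ell_p\le E'\,L_p^{1/p}$.

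With both two-sided comparisons in hand, the endgame is short: from $\M\approx\L$ we get $D\le (M_p/L_p)^{1/p}\le C$, i.e.\ $M_p^{1/p}$ and $L_p^{1/p}$ are comparable with the \emph{fixed} constants $D$ and $C$ (no exponential blow-up, since we took $p$-th roots); chaining $m_p\asymp M_p^{1/p}\asymp L_p^{1/p}\asymp \ell_p$ then yields $d\,\ell_p\le m_p\le c\,\ell_p$ with, e.g., $c=E C/ (\text{lower const for }\ell_p\text{ vs }L_p^{1/p})$ and $d= D/E'$. The final sentence "for strongly regular sequences one may interchange $\simeq$ and $\approx$" is then immediate, since strongly regular sequences are (lc) and (mg). The main obstacle — and the only place real content enters — is getting the two-sided comparison between $m_p$ and $M_p^{1/p}$: the upper bound $m_p\le E M_p^{1/p}$ is exactly the nontrivial equivalence (ii.2.a)$\Leftrightarrow$(ii.2.b) of Petzsche–Vogt quoted in Proposition~\ref{propPropiedlcmg}, so effectively the whole weight of part (ii) rests on that cited result; everything else is taking $p$-th roots to kill exponential constants.
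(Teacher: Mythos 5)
Your proof is correct. The paper omits the argument as straightforward, and what you give is precisely the intended route: part (i) by multiplying the quotient inequalities through $M_p=m_0\cdots m_{p-1}$, and part (ii) by observing that (mg) transfers to $\L$ via $\approx$, invoking the Petzsche--Vogt comparison $M_p^{1/p}\le m_p\le E\,M_p^{1/p}$ (Proposition~\ref{propPropiedlcmg}(ii.1) and (ii.2.b)) for both sequences, and taking $p$-th roots of $D^pL_p\le M_p\le C^pL_p$ to kill the exponential constants.
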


\begin{rema}\label{remaCambioSucesNoLogarConvex}
Clearly, property (mg) is preserved by the relation $\approx$, and property (snq) is preserved by $\simeq$. So, if a sequence $\M$ is (mg) and (snq), and another sequence $\L$ is (lc) and such that $\bl\simeq\bm$, then $\L$ is strongly regular. In particular, whenever $\M$ is (mg) and (snq), and $\bm$ is eventually nondecreasing, it is easy to construct a strongly regular sequence $\L$ such that $\bl\simeq\bm$ and, in fact,
$\ell_p=m_p$ for every $p$ greater than or equal to some suitable $p_0$.
\end{rema}

\begin{exam}\label{examstroregusequ}
\begin{itemize}
\item[(i)] The best known example of strongly regular sequence is the \textit{Gevrey sequence of order $\a>0$}, $\M_{\a}=(p!^{\a})_{p\in\N_{0}}$.
\item[(ii)] The sequences $\M_{\a,\b}=\big(p!^{\a}\prod_{m=0}^p\log^{\b}(e+m)\big)_{p\in\N_0}$, where $\a>0$ and $\b\in\R$, are strongly regular (in case $\b<0$, the sequence has to be suitably modified according to Remark~\ref{remaCambioSucesNoLogarConvex}).
\item[(iii)] For $q>1$, $\M=(q^{p^2})_{p\in\N_0}$ is (lc) and (snq), but not (mg).
\end{itemize}
\end{exam}

The following result, given by H.-J Petzsche~\cite{Pet}, will be important later on.
 %in terms of the conditions $(\b_1)$ and $(\b^0_1)$. For the seek
%  of completeness, we give the proof with our conditions.

\begin{pro}[\cite{Pet}, Prop.\ 1.1]\label{petzche11}
 Let $\M$ be a sequence such that the sequence $\M^*$ given by $M^*_p:=p!M_p$, $p\in\N_0$, is logarithmically convex. Then,
 the following statements are equivalent:
 \begin{enumerate}[(i)]
 \item $\M$ verifies (snq).
 \item There exists $k\in\N$, $k\ge 2$, such that $\M$ verifies
 \begin{equation}\label{condition.liminf.mkp.over.mp.great.1}
  \liminf_{p\ri\oo} \frac{m_{kp}}{m_p}>1.
 \end{equation}
 \item There exists a logarithmically convex sequence $\mathbb{H}$ such that $\h\simeq \m$ and
 \begin{equation}\label{condition.m2p.over.mp.great.1}
  \inf_{p\geq1} \frac{h_{2p}}{h_p}>1.
 \end{equation}

 \end{enumerate}
\end{pro}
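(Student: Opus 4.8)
The plan is to prove the cycle of implications (iii) $\Rightarrow$ (ii) $\Rightarrow$ (i) $\Rightarrow$ (iii), working throughout with sequences of quotients. Recall from Proposition~\ref{propPropiedlcmg}(i) that a sequence is (lc) exactly when its sequence of quotients is nondecreasing; applied to $\M^*$ this says the standing hypothesis is equivalent to ``$\big((q+1)m_q\big)_{q\in\N_0}$ is nondecreasing'', and applied to $\mathbb{H}$ it says ``$\mathbb{H}$ is (lc)'' $\iff$ ``$\h$ is nondecreasing''. Also, (snq) for $\M$ reads $\sum_{q\ge p}\frac{1}{(q+1)m_q}\le B/m_p$ for all $p$, and it is stable under $\simeq$. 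With this, (iii) $\Rightarrow$ (ii) is immediate: if $\inf_{p\ge 1}h_{2p}/h_p=:1+2\eta>1$, then monotonicity of $\h$ gives $h_{2^jp}\ge(1+2\eta)^jh_p$ for all $p\ge 1$, $j\ge 0$; choosing $c,d>0$ with $dm_p\le h_p\le cm_p$ (from $\h\simeq\m$) we get $m_{2^jp}/m_p\ge(d/c)(1+2\eta)^j$, so fixing $j$ with $(d/c)(1+2\eta)^j>1$ and setting $k:=2^j$ yields $\liminf_{p\to\infty}m_{kp}/m_p>1$.

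For (ii) $\Rightarrow$ (i): fix $k\ge 2$, pick $\delta>0$ and $p_0$ with $m_{kp}\ge(1+\delta)m_p$ for $p\ge p_0$, so $m_{k^jp}\ge(1+\delta)^jm_p$ there. For a fixed $p\ge p_0$, partition $\{q\ge p\}$ into blocks $I_j:=\{q:k^jp\le q<k^{j+1}p\}$, of size $(k-1)k^jp$; as $q\mapsto(q+1)m_q$ is nondecreasing, on $I_j$ one has $(q+1)m_q\ge k^jp\,m_{k^jp}\ge k^jp(1+\delta)^jm_p$, hence
\[
\sum_{q\ge p}\frac{1}{(q+1)m_q}\le\sum_{j\ge 0}\frac{(k-1)k^jp}{k^jp(1+\delta)^jm_p}=\frac{(k-1)(1+\delta)}{\delta}\cdot\frac{1}{m_p}.
\]
This is (snq) for $p\ge p_0$; the case $p=p_0$ already shows the series converges, so the finitely many smaller indices are absorbed by enlarging the constant.

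The substantial part is (i) $\Rightarrow$ (iii), which I would carry out in three steps. (1) Replace $\m$ by its running maximum $g_p:=\max_{0\le q\le p}m_q$: it is nondecreasing with $g_p\ge m_p$, and a short computation using both (snq) and the hypothesis shows $g_p\le(1+B)m_p$ for every $p$. Indeed, if the maximum is attained at some $q^*<p$, then $(q^*+1)m_{q^*}\le(p+1)m_p$ bounds $q^*+1$ above by $(p+1)m_p/m_{q^*}$, while applying (snq) at $q^*$ together with $(r+1)m_r\le(p+1)m_p$ for $q^*\le r\le p$ bounds $q^*$ below by $(p+1)\big(1-Bm_p/m_{q^*}\big)$, and these two bounds force $m_{q^*}/m_p<1+B$. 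Thus $g\simeq\m$, so $g$ inherits (snq): $\sum_{q\ge p}\frac{1}{(q+1)g_q}\le B'/g_p$. (2) Break this tail into dyadic blocks $[2^jp,2^{j+1}p)$; by monotonicity of $g$ each block contributes at least $\tfrac12\,g_{2^{j+1}p}^{-1}$, so $\sum_{i\ge 1}g_p/g_{2^ip}\le 2B'$; since the summands are nonincreasing in $i$ and exceed $\tfrac12$ as long as $g_{2^ip}<2g_p$, the least $i$ with $g_{2^ip}\ge 2g_p$ is at most $I:=\lfloor 4B'\rfloor+1$, uniformly in $p$, whence $g_{2^{jI}p}\ge 2^jg_p$ for all $p\ge 1$, $j\ge 0$. (3) Regularize by defining, for $p\ge 1$,
\[
h_p:=\inf_{q\ge p}\ g_q\cdot 2^{-\lfloor\log_2(q/p)\rfloor/I}
\]
(and $h_0:=\tfrac12 g_0$). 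Step (2) gives $\tfrac12 g_p\le h_p\le g_p$, so $\h\simeq\m$; the exponent $\lfloor\log_2(q/p)\rfloor$ is nonincreasing in $p$ and the index set $\{q\ge p\}$ shrinks, so $\h$ is nondecreasing and $\mathbb{H}$ is (lc); and because $\lfloor\log_2(q/(2p))\rfloor=\lfloor\log_2(q/p)\rfloor-1$ whenever $q\ge 2p$, one obtains $h_{2p}=2^{1/I}\inf_{q\ge 2p}g_q2^{-\lfloor\log_2(q/p)\rfloor/I}\ge 2^{1/I}h_p$, i.e.\ $\inf_{p\ge 1}h_{2p}/h_p\ge 2^{1/I}>1$. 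This is precisely (iii).

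The only genuinely non-routine point, and the one I expect to be the main obstacle, is the choice of regularization in step (3): because $\simeq$ comes with fixed constants, no nondecreasing sequence equivalent to $\m$ can have doubling ratios bounded away from $1$ unless $\m$ (equivalently $g$) already grows by a definite factor over a bounded number of dyadic steps, and extracting this from (snq) is exactly what step (2) does — this is also where the hypothesis on $\M^*$ is used, through the running-maximum comparison in step (1). Once that growth is in hand, the discounted-infimum formula above manufactures a genuine doubling ratio $2^{1/I}$, and all remaining verifications reduce to elementary estimates with floors and geometric series.
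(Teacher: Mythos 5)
Your proposal is correct, but there is nothing in the paper to compare it against: Proposition~\ref{petzche11} is quoted verbatim from Petzsche's article \cite{Pet} and the authors give no proof of it (the only argument they supply in this vein is the derivation of Corollary~\ref{petzche13} \emph{from} the proposition). So you have in effect supplied the missing proof. I checked each implication of your cycle. In (iii)~$\Rightarrow$~(ii) the inequality $h_{2^jp}\ge(1+2\eta)^jh_p$ comes from iterating the doubling bound rather than from monotonicity of $\h$, but the conclusion is right. The block estimate in (ii)~$\Rightarrow$~(i) is correct, and the reduction of the finitely many initial indices to an enlargement of the constant is legitimate since the series is already known to converge. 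The substantial direction (i)~$\Rightarrow$~(iii) is also sound: the running-maximum comparison in step~(1) is exactly where the logarithmic convexity of $\M^*$ (equivalently, the monotonicity of $\big((q+1)m_q\big)_q$) enters, and the two-sided squeeze on $q^*$ does force $m_{q^*}<(1+B)m_p$; the dyadic-block argument in step~(2) correctly extracts a uniform $I$ with $g_{2^{I}p}\ge 2g_p$; and the discounted infimum in step~(3) does produce a nondecreasing sequence $\h$ with $\tfrac12 g_p\le h_p\le g_p$ and $h_{2p}\ge 2^{1/I}h_p$, since $\lfloor\log_2(q/(2p))\rfloor=\lfloor\log_2(q/p)\rfloor-1$ and the infimum over $q\ge 2p$ dominates the infimum over $q\ge p$. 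This regularization is in the same spirit as Petzsche's original construction, and your write-up is a complete, self-contained substitute for the external reference.
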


We deduce the following useful corollary, whose proof is included for the convenience of the reader.

\begin{coro}[\cite{Pet}, Corol.\ 1.3(a)] \label{petzche13}
 Let $\M$ be a (lc) sequence verifying (snq), then there exist $\ep>0$ and a sequence $\mathbb{L}$ such that $\l\simeq\m$ and
 the sequence $(L_p (p!)^{-\ep})_{p\in\N_0}$ is (lc) and  verifies (snq).
\end{coro}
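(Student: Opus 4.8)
The plan is to derive the statement from Petzsche's Proposition~\ref{petzche11}, exploiting not merely that $\M$ satisfies (snq) but the stronger and more flexible consequence it yields: the existence of a sequence equivalent to $\m$ (in the sense of $\simeq$) whose \emph{dyadic} growth is genuinely geometric. Since $\M$ is (lc), the sequence $\M^*=(p!M_p)_{p\in\N_0}$ is again (lc) — its quotients are $(p+1)m_p$, a product of two nondecreasing positive sequences — so Proposition~\ref{petzche11} applies; because $\M$ verifies (snq), it furnishes a logarithmically convex sequence $\H$ with $\h\simeq\m$ and $C:=\inf_{p\ge 1}h_{2p}/h_p>1$. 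Put $\mu:=\log_2 C>0$. Iterating $h_{2p}\ge Ch_p$ and using that $\h$ is nondecreasing (Proposition~\ref{propPropiedlcmg}(i)), a routine dyadic argument upgrades this to a true power estimate $h_q/h_p\ge C^{-1}(q/p)^{\mu}$ for all $q\ge p\ge1$.

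Next I would fix the exponent: take any $\ep\in(0,\mu)$ and set $\delta:=\mu-\ep>0$. Let $f_p:=h_p(p+1)^{-\ep}$. Combining the power estimate with the elementary inequality $(p+1)/(q+1)\ge p/q$ (valid for $q\ge p$) gives $f_q/f_p\ge C^{-1}(q/p)^{\delta}$ for $q\ge p\ge1$; in particular $f_q\ge f_p/C$ whenever $q\ge p\ge1$, and, taking $p=1$ together with $h_1\ge h_0$, one also checks that $f_q\to\infty$ and that $\inf_{q\ge 0}f_q$ is comparable to $f_0$. Hence the nondecreasing minorant $g_p:=\inf_{q\ge p}f_q$ is an attained, positive minimum and satisfies $c\,f_p\le g_p\le f_p$ for some $c>0$ and all $p$. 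Define $\ell_p:=g_p(p+1)^{\ep}$ and let $\L$ be the sequence with $L_0=1$ and sequence of quotients $\l=(\ell_p)_{p\in\N_0}$ (cf. Remark~\ref{remaequivMm}). Then $\l\simeq\h\simeq\m$, so $\l\simeq\m$; moreover $(L_p(p!)^{-\ep})_{p\in\N_0}$ has first term $1$ and its sequence of quotients equals $(\ell_p(p+1)^{-\ep})_p=(g_p)_p$, which is nondecreasing, so $(L_p(p!)^{-\ep})_p$ is (lc).

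It remains to check that $(L_p(p!)^{-\ep})_p$ satisfies (snq). Being (lc), it again fulfils the hypothesis of Proposition~\ref{petzche11} — its $\ast$-companion $(p!\,L_p(p!)^{-\ep})_p$ has quotients $(p+1)g_p$, nondecreasing — so it suffices to produce one integer $k\ge2$ with $\liminf_{p\to\infty}g_{kp}/g_p>1$. Since $g_p\le f_p$ and $g_{kp}\ge f_{kp}/C$ (as $kp\ge p\ge1$), we get $g_{kp}/g_p\ge f_{kp}/(Cf_p)\ge C^{-2}k^{\delta}$; choosing any $k\ge2$ with $k^{\delta}>C^{2}$ makes this uniformly larger than $1$, hence $\liminf_{p\to\infty}g_{kp}/g_p\ge C^{-2}k^{\delta}>1$. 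Proposition~\ref{petzche11} then yields (snq) for $(L_p(p!)^{-\ep})_p$, completing the argument.

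The delicate point, and the reason an abstract appeal to (snq) alone does not suffice, is that one must multiply $\M$ by a positive power $(p!)^{-\ep}$ and \emph{simultaneously} keep the result logarithmically convex and strongly non-quasianalytic. The only room for this is the margin between the unrestricted growth of $\m$ and the geometric dyadic growth produced by Proposition~\ref{petzche11} — quantitatively the condition $\ep<\mu$ — and one must also repair the resulting loss of monotonicity via the nondecreasing minorant without leaving the class $\simeq\m$ or breaking the convergence of the (snq) series. Everything else (the dyadic iteration, the inequality $(p+1)/(q+1)\ge p/q$, the fact that $g_p$ is a genuine minimum) is routine.
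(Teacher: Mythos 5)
Your proof is correct, and it shares its skeleton with the paper's: both arguments start by applying Proposition~\ref{petzche11} to $(p!M_p)_{p\in\N_0}$ (legitimate since $\M$ is (lc)) to produce an (lc) sequence $\H$ with $\h\simeq\m$ and $C:=\inf_{p\ge1}h_{2p}/h_p>1$, both then choose $\ep$ strictly below the exponent $\log_2 C$, and both ultimately deduce (snq) for the modified sequence from the equivalence (i)$\Leftrightarrow$(ii) in Proposition~\ref{petzche11}. Where you genuinely diverge is in how logarithmic convexity is restored after dividing by $(p+1)^{\ep}$. The paper verifies that $(h_p(p+1)^{-\ep})_{p\in\N_0}$ still satisfies the doubling condition \eqref{condition.m2p.over.mp.great.1} (with constant $2^{\xi-\ep}>1$), checks that $(h_p(p+1)^{1-\ep})_{p}$ is nondecreasing — which is why it also imposes $\ep<1$ — and then invokes Proposition~\ref{petzche11}(iii) a \emph{second} time, applied to $(H_p(p!)^{-\ep})_{p\in\N_0}$, to obtain the convexified sequence $\A$ as a black box, setting $L_p=A_p(p!)^{\ep}$. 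You instead upgrade the doubling inequality to the quantitative power estimate $h_q/h_p\ge C^{-1}(q/p)^{\log_2 C}$ by dyadic iteration and build the nondecreasing minorant $g_p=\inf_{q\ge p}h_q(q+1)^{-\ep}$ by hand, checking directly that it stays within bounded ratios of $h_p(p+1)^{-\ep}$ (so that $\l\simeq\h\simeq\m$) and that $\liminf_p g_{kp}/g_p\ge C^{-2}k^{\mu-\ep}>1$ for suitable $k$. Your route is more self-contained and quantitative, dispenses with the extra restriction $\ep<1$ (the monotonicity of $(p+1)g_p$ being automatic as a product of nondecreasing positive sequences), and makes explicit the constants that the second appeal to Petzsche's proposition hides; the paper's version is shorter on the page precisely because it outsources the convexification. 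The individual steps you flag as routine do check out: the inequality $(p+1)/(q+1)\ge p/q$ for $q\ge p$, the positivity and attainment of the infimum (since $f_q\to\infty$), and the lower bound $g_p\ge C^{-1}f_p$ for $p\ge1$.
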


\begin{proof}
The sequence $ (p! M_p )_{p\in\N_0}$ satisfies the hypotheses in Proposition~\ref{petzche11}, so by (iii) there exists $\mathbb{H}$ such
that $\H$ is (lc), satisfies (\ref{condition.m2p.over.mp.great.1}) and $ \bf{h} \simeq \m $. So, there exists $\xi>0$ such that for every $p\in\N$ we have
$$\frac{h_{2p}}{ h_p} \geq 2^{\xi}.$$
We fix $0<\ep<\min(1,\xi)$, let us show that $(h_p (p+1)^{-\ep})_{p\in\N_0}$ also satisfies (\ref{condition.m2p.over.mp.great.1}).
The sequence $(p+1)/(2p+1)$ decreases towards $1/2$ and we have
$$\frac{h_{2p}(2p+1)^{-\ep}}{h_p (p+1)^{-\ep}} \geq \frac{h_{2p}(2)^{-\ep}}{h_p} \geq \frac{2^\xi}{2^\ep}>1.  $$
As $\H$ is (lc), the
sequence $(h_p (p+1)^{1-\ep})_{p\in\N}$ also is, because $1-\ep>0$. As
condition (\ref{condition.m2p.over.mp.great.1}) implies (\ref{condition.liminf.mkp.over.mp.great.1}), we may apply Proposition~\ref{petzche11}(iii) to the sequence $(H_p (p!)^{-\ep})_{p\in\N_0}$, so there exists $\A$ that is (lc), satisfies (\ref{condition.m2p.over.mp.great.1})
and ${\bf a} \simeq (h_p (p+1)^{-\ep})_{p\in\N_0} $. We define $L_p:= A_p (p!)^{\ep}$ and we observe that
$$\m \simeq \h \simeq (a_p (p+1)^{\ep})_{p \in \N_0} = \bl. $$
The sequence $(L_p (p!)^{-\ep})_{p\in\N_0} =( A_p)_{p\in\N_0} $ is logarithmically convex. Finally, by Remark~\ref{remaCambioSucesNoLogarConvex} we see
that $\mathbb{L}$ verifies (snq).
\end{proof}

One may associate with a strongly regular sequence $\bM$ the function
\begin{equation}\label{equadefiMdet}
M(t):=\sup_{p\in\N_{0}}\log\big(\frac{t^p}{M_{p}}\big),\quad t>0;\qquad M(0)=0,
\end{equation}
which is a non-decreasing continuous map in $[0,\infty)$ with $\lim_{t\to\infty}M(t)=\infty$.
Indeed,
\begin{equation*}%\label{equaexprMdet}
M(t)=\left \{ \begin{matrix}  p\log t -\log(M_{p}) & \mbox{if }t\in [m_{p-1},m_{p}),\ p=1,2,\ldots,\\
0 & \mbox{if } t\in [0,m_{0}), \end{matrix}\right.
\end{equation*}
and one can easily check that $M$ is convex in $\log t$, i.e., the map $t\mapsto M(e^t)$ is convex in $\mathbb{R}$.
We also observe that
\begin{equation}
 M(m_p)=\log \left( \frac{m^p_p}{M_p}\right), \qquad p\in \N_0. \label{equation.M.in.mp}
\end{equation}

\subsection{Asymptotic expansions and ultraholomorphic classes}\label{subsectCarlemanclasses}

We recall the concept of asymptotic expansion in the case that the remainders are controlled in terms of a sequence of positive real numbers $\M=(M_p)_{p\in\N_0}$.

\begin{defi}
We say a complex holomorphic function $f$ in a sectorial region $G$ admits the formal power series $\hat{f}=\sum_{k=0}^{\infty}a_{k}z^{k}\in\C[[z]]$ as
its $\bM-$\emph{asymptotic expansion} in $G$ (when the variable tends to 0) if for every $T\ll G$ there
exist $C_T,A_T>0$ such that for every $p\in\N_0$ and $z\in T$, one has
\begin{equation*}\Big|f(z)-\sum_{k=0}^{p-1}a_kz^k \Big|\le C_TA_T^pM_{p}|z|^p.%\label{desarasint}
\end{equation*}
We will write $f\sim_{\bM}\sum_{k=0}^{\infty}a_kz^k$ in $G$. $\tilde{\mathcal{A}}_{\M}(G)$ stands for the space of
functions admitting $\bM-$asymptotic expansion in $G$, and it is called a \emph{Carleman ultraholomorphic class} (in the sense of Roumieu).
\end{defi}

As a consequence of Taylor's formula and Cauchy's integral formula for the derivatives, we have the following result (see \cite{Balserutx} for a proof in the Gevrey case, which may be easily adapted to this more general situation).

\begin{prop}\label{propcotaderidesaasin}
Let $G$ be a sectorial region and $f\colon G\to\C$ a holomorphic function. The following are equivalent:
\begin{itemize}
\item[(i)] $f\in\tilde{\mathcal{A}}_{\M}(G)$.
%If $f\in\mathcal{A}_{\M,A}(S)$, then $f$ admits $\hat{f}=\sum_{p\in\N_0}\frac{1}{p!}f^{(p)}(0)z^p$ as its uniform $\bM-$asymptotic expansion in $S$ of type $A$.
\item[(ii)] For every $T\ll G$ there exist $C_T,A_T>0$ such that for every $p\in\N_0$ and $z\in T$, one has
    $$
    |f^{(p)}(z)|\le C_TA_T^p p!M_p.
    $$
    \end{itemize}
In case any of the previous holds, for every $p\in\N_{0}$ one may define
$$f^{(p)}(0):=\lim_{z\in T,z\to0 }f^{(p)}(z)\in\C,$$
which is independent of $T\ll G$, and one has $f\sim_{\M}\sum_{p\in\N_0}\frac{1}{p!}f^{(p)}(0)z^p$ in $G$.
\end{prop}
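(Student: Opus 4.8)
The plan is to establish the equivalence by proving (i) $\Rightarrow$ (ii) and (ii) $\Rightarrow$ (i) separately, using the explicit description of the function $M(t)$ on the intervals $[m_{p-1},m_p)$ and the standard relation between $\tilde{\mathcal{A}}_{\M}(G)$ and bounds on derivatives. The key technical fact is the representation $M(t)=p\log t-\log M_p$ for $t\in[m_{p-1},m_p)$ together with the convexity of $t\mapsto M(e^t)$, which allows one to convert bounds on successive remainders of $\hat f$ into bounds on $f^{(p)}$ and back.

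\medskip

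First I would prove (ii) $\Rightarrow$ (i). Assume $f\colon G\to\C$ is holomorphic and that for every $T\ll G$ there are $C_T,A_T>0$ with $|f^{(p)}(z)|\le C_TA_T^pp!M_p$ for all $p\in\N_0$ and $z\in T$. Fix $T\ll G$ and choose an intermediate sector $T'$ with $T\ll T'\ll G$; for $z\in T$, there is a disc $D(z,\delta|z|)$ contained in $T'$ for some fixed $\delta>0$ depending only on $T,T'$. By Taylor's formula with integral remainder applied along a segment inside this disc, for $w$ in the segment from $0$'s direction — more precisely, writing $f(z)-\sum_{k=0}^{p-1}f^{(k)}(\xi_0)(z-\xi_0)^k/k!$ is not quite what we want; instead I would use the formula
\begin{equation*}
f(z)-\sum_{k=0}^{p-1}\frac{f^{(k)}(0)}{k!}z^k=\frac{z^p}{(p-1)!}\int_0^1(1-s)^{p-1}f^{(p)}(sz)\,ds,
\end{equation*}
which is valid here because Proposition~\ref{propcotaderidesaasin} (or rather its proof) already guarantees that $f^{(k)}(0):=\lim_{z\to0}f^{(k)}(z)$ exists; then estimating gives $|f(z)-\sum_{k=0}^{p-1}a_kz^k|\le C_TA_T^pM_p|z|^p/p\le C_TA_T^pM_p|z|^p$, which is exactly the definition of $f\sim_{\M}\hat f$. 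So $f\in\tilde{\mathcal{A}}_{\M}(G)$, and the formal series is $\sum_p f^{(p)}(0)z^p/p!$ as claimed.

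\medskip

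Next I would prove (i) $\Rightarrow$ (ii), which is the Cauchy-estimates direction and the part I expect to require slightly more care. Suppose $f\sim_{\M}\hat f=\sum a_kz^k$ in $G$. Fix $T\ll G$ and pick $T'$ with $T\ll T'\ll G$, so that each $z\in T$ has $\overline{D(z,\delta|z|)}\subset T'$ for a fixed $\delta\in(0,1)$. Applying the asymptotic bound on $T'$ at the "level $p+1$" remainder and Cauchy's integral formula for $f^{(p)}$ over the circle $|w-z|=\delta|z|$,
\begin{equation*}
f^{(p)}(z)=\frac{p!}{2\pi i}\int_{|w-z|=\delta|z|}\frac{f(w)-\sum_{k=0}^{p}a_kw^k}{(w-z)^{p+1}}\,dw+\frac{d^p}{dz^p}\Big(\sum_{k=0}^pa_kz^k\Big),
\end{equation*}
where the polynomial term contributes exactly $p!\,a_p$ and is harmless since the asymptotic bound at $p=1$ shows $|a_p|\lesssim A^pM_p$ after a routine induction (or one simply notes $a_p=f^{(p)}(0)/p!$). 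The integral is bounded by $p!\,C_{T'}A_{T'}^{p+1}M_{p+1}(2|z|)^{p+1}/(\delta|z|)^{p+1}$, and using (mg) in the form $M_{p+1}\le A^{p+1}M_pM_1\le(A A_1)^{p+1}M_p$ (Definition of strongly regular, property (ii), with $q=1$), this collapses to $|f^{(p)}(z)|\le C_T'A_T'^pp!M_p$ for new constants. Hence (ii) holds on $T$, and since $T\ll G$ was arbitrary we are done. The main obstacle throughout is purely bookkeeping: choosing the nested sectors and the radius $\delta|z|$ correctly so that all discs stay inside a fixed $T'\ll G$, and invoking (mg) to absorb the shift from $M_{p+1}$ back to $M_p$; no deep input beyond Cauchy's formula, Taylor's formula, and the moderate growth hypothesis is needed. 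Finally, the independence of $f^{(p)}(0)$ from $T$ and the identification of the formal series follow because two sectors $T_1,T_2\ll G$ can be joined by a chain of overlapping subsectors on which the limits agree, and matching the Taylor coefficients of $\hat f$ with $f^{(p)}(0)/p!$ is immediate from the $p$ and $p+1$ level remainder estimates.
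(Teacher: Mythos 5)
The paper gives no proof of this proposition: it cites Balser's book for the Gevrey case and asserts the argument adapts, and the intended argument is precisely the standard one via Taylor's formula with integral remainder for (ii)$\Rightarrow$(i) and Cauchy's integral formula on discs $D(z,\delta|z|)$ for (i)$\Rightarrow$(ii). Your proposal follows exactly that route, so in approach you are aligned with what the paper intends, but two points need repair. First, in the (ii)$\Rightarrow$(i) direction you justify the existence of $f^{(k)}(0)$ by appealing to ``Proposition~\ref{propcotaderidesaasin} (or rather its proof)'', which is circular: that existence is part of what is being proved. The correct (and short) argument is that (ii) with $p=k+1$ makes $f^{(k+1)}$ bounded on each $T\ll G$, so $f^{(k)}$ is Lipschitz on $T$ (two points $z,w\in T$ of small modulus can be joined inside $T$ by a radial segment plus a circular arc of total length at most a constant times $|z|+|w|$), hence $f^{(k)}$ satisfies the Cauchy condition as $z\to0$ in $T$ and the limit exists; only then may you write Taylor's formula centred at $0$. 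Second, in the (i)$\Rightarrow$(ii) direction you differentiate the order-$(p+1)$ remainder, which forces you to handle the polynomial term $p!\,a_p$ separately and to invoke (mg) to pass from $M_{p+1}$ back to $M_p$. That works for strongly regular sequences, but it is cleaner and more general (the proposition is stated for an arbitrary positive sequence $\M$) to apply Cauchy's formula to the order-$p$ remainder $g_p(w)=f(w)-\sum_{k=0}^{p-1}a_kw^k$: its $p$-th derivative is $f^{(p)}$ because the polynomial part is killed, and the bound $|g_p(w)|\le C_{T'}A_{T'}^pM_p\bigl((1+\delta)|z|\bigr)^p$ on the circle $|w-z|=\delta|z|$ immediately yields $|f^{(p)}(z)|\le C_{T'}\bigl((1+\delta)A_{T'}/\delta\bigr)^pp!\,M_p$, with no appeal to (mg) and no separate estimate for $a_p$. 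The remaining bookkeeping (nested subsectors, independence of the limits from $T$, identification of $a_p$ with $f^{(p)}(0)/p!$) is as you describe.
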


In accordance with the ultraholomorphic classes we define the classes of sequences
$$\Lambda_{\M}=\Big\{\bmu=(\mu_{p})_{p\in\N_{0}}\in\C^{\N_{0}}: \textrm{there exist $C,A>0$ with }|\mu_p|\le CA^{p}p!M_{p},\ p\in\N_{0}\Big\}.$$
Then, it is clear that the map $\tilde{\mathcal{B}}:\tilde{\mathcal{A}}_{\M}(G)\longrightarrow \Lambda_{\M}$ given by
\begin{equation*}%\label{equaBoremap}
\tilde{\mathcal{B}}(f):=(f^{(p)}(0))_{p\in\N_{0}},
\end{equation*}
is well defined, and it will be called the \textit{asymptotic Borel map}.

\begin{defi}
A function $f\in\tilde{\mathcal{A}}_{\M}(G)$ is said to be \textit{flat} if $\tilde{\mathcal{B}}(f)$ is the null sequence or, in other words, $f\sim_{\M}\hat{0}$, where $\hat{0}$ denotes the null power series.
\end{defi}

\subsection{Quasianalyticity}

We are interested in characterizing those ultraholomorphic classes in which the asymptotic Borel map is injective.
% or, in other words,. First, quasianalytic Carleman classes are defined.

\begin{defi}
Let $G$ be a sectorial region and $\M=(M_{p})_{p\in\N_{0}}$ be a sequence of positive numbers. We say that $\mathcal{\tilde{A}}_{\M}(G)$
is  \textit{quasianalytic} if it does not contain nontrivial flat functions.
\end{defi}

\begin{rema} By a simple rotation, we see that the bisecting direction $d$ of the sectorial region $G$ is irrelevant in the study of
quasianalyticity. This allow us to consider only sectorials regions bisected by the direction $d=0$, which will be denoted by $G_\ga$ whenever their opening equals $\pi\ga$, no matter what their specific shape is.
\end{rema}

In order to obtain our next result, Theorem~\ref{theo.partial.version.general.Watson.Lemma.following.Mandelbrojt}, we will use the following generalization of Watson's Lemma, given
by S. Mandelbrojt in~\cite{mandelbrojt}. A different approach may be found in~\cite{SanzFlat}, where we used a result of B. I. Korenbljum~\cite{korenbljum} on quasianalyticity for ultraholomorphic classes on sectors with uniformly bounded derivatives, a setting which we are not interested in considering here.

\begin{theo}[\cite{mandelbrojt},\ Section\ 2.4.III]
 Let $\M$ be a logarithmically convex sequence with $\lim_{p\ri\oo} m_p=\oo$, $H=\{z\in\C: \Re(z)>0 \}$ and $\alpha>0$. The following statements are equivalent:
 \begin{enumerate}[(i)]
  \item $\displaystyle \sum_{p=0}^{\oo} \left(\frac{1}{m_p} \right)^{1/\a}$ diverges,
  \item If $f\in \mathcal{H}(H)$ and there exist $A,C>0$ such that
  $$|f(z)|\leq \frac{CA^pM_p}{|z|^{\a p}}, \qquad z\in H, \quad p\in \N_0,$$
  then $f$ identically vanishes.
 \end{enumerate}

\end{theo}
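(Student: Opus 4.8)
\medskip
\noindent The plan is to convert the pointwise bounds on $f$ into a decay statement for $\log|f|$ on the boundary of a half-plane, and then to pit Carleman's logarithmic-integral criterion for quasianalyticity against the $\a$-th root condition in (i). First I would reformulate the hypothesis. Taking the infimum over $p$ in the inequality of (ii) and recalling the definition \eqref{equadefiMdet} of $M(t)$ (so that $\inf_{p}A^{p}M_{p}t^{-p}=\exp(-M(t/A))$, while conversely $\exp(-M(t))\le M_{p}t^{-p}$ for every $p$), the hypothesis in (ii) is equivalent to the existence of $C,A>0$ with
\begin{equation}\label{eq:prop-reform}
|f(z)|\le C\exp\bigl(-M(|z|^{\a}/A)\bigr),\qquad z\in H.
\end{equation}
Secondly, put $N_{p}:=M_{p}^{1/\a}$; this sequence is logarithmically convex, its quotients $m_{p}^{1/\a}$ tend to infinity, and a short computation shows that its associated function (formula \eqref{equadefiMdet} with $N_{p}$ in place of $M_{p}$) equals $\frac1\a M(t^{\a})$. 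By the classical equivalence, valid for any logarithmically convex sequence whose quotients tend to infinity, between the divergence of the series of reciprocals of the quotients and the divergence of the integral over $[1,\infty)$ of the associated function divided by $t^{2}$, condition (i) is equivalent to
\begin{equation}\label{eq:prop-int}
\int_{1}^{\infty}\frac{M(t^{\a})}{t^{2}}\,dt=\infty.
\end{equation}

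For (i)$\Rightarrow$(ii), assume \eqref{eq:prop-int} and let $f$ satisfy \eqref{eq:prop-reform}. Rotating to the upper half-plane $\C_{+}=\{w:\Im w>0\}$, set $G(w):=f(-iw)$; then $G$ is bounded and holomorphic on $\C_{+}$, and since $|-iw|=|w|$ one gets $|G(x+i\ep)|\le C\exp(-M(|x|^{\a}/A))$ for all $x\in\R$ and $\ep>0$. For fixed $\ep>0$ the function $w\mapsto\log|G(w+i\ep)|$ is subharmonic and bounded above on $\C_{+}$ with continuous boundary values, so by the Phragm\'en--Lindel\"of principle for the half-plane, for every $z_{0}\in\C_{+}$ either $G(z_{0}+i\ep)=0$ or
\begin{equation*}
\log|G(z_{0}+i\ep)|\le\int_{\R}P_{z_{0}}(x)\log|G(x+i\ep)|\,dx\le\log C-\int_{\R}P_{z_{0}}(x)\,M(|x|^{\a}/A)\,dx,
\end{equation*}
where $P_{z_{0}}$ is the Poisson kernel of $\C_{+}$. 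Since $P_{z_{0}}(x)$ is comparable to $|x|^{-2}$ as $|x|\to\infty$ and bounded near $0$, the last integral diverges exactly when \eqref{eq:prop-int} holds; hence $\log|G(z_{0}+i\ep)|=-\infty$, i.e.\ $G(z_{0}+i\ep)=0$. As $z_{0}\in\C_{+}$ and $\ep>0$ were arbitrary, $G\equiv0$, so $f\equiv0$.

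For (ii)$\Rightarrow$(i) I would argue by contraposition: from $\sum_{p}m_{p}^{-1/\a}<\infty$ one must produce a nonzero $f\in\mathcal{H}(H)$ meeting the bounds of (ii), equivalently \eqref{eq:prop-reform}. This is a Denjoy--Carleman-type construction: one takes $f$ to be an infinite product (or a Laplace transform of a suitable kernel) whose nodes are placed so that their counting function is comparable to $\a\,\mu(t^{\a})$, with $\mu$ the counting function of $(m_{p})$; convergence is then ensured by the hypothesis, and a direct estimate yields $|f(z)|\le C\exp(-cM(|z|^{\a}/A))$ throughout $H$ for a suitable $c\ge1$, which by \eqref{eq:prop-reform} is what (ii) requires.

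I expect the main obstacle to be twofold. In (i)$\Rightarrow$(ii) one must justify the Poisson majorization of $\log|G|$ for a function that is only bounded and holomorphic (boundary values, Nevanlinna/Smirnov class, the singular term in the Herglotz representation); this is precisely why the argument is run on the translates $G(\cdot+i\ep)$, which are continuous up to $\R$. The genuinely constructive difficulty is in (ii)$\Rightarrow$(i): the flat function must decay at the rate \eqref{eq:prop-reform} in \emph{every} direction of $H$, not merely along one ray, so a plain product over factors of the form $b_{p}+z$ will not do and the nodes must follow the $|z|^{\a}$-profile described above; verifying the resulting bound away from the real axis is the crux of that implication.
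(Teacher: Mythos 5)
First, a point of reference: the paper does not prove this statement at all --- it is quoted from Mandelbrojt's book (Section 2.4.III) and used as a black box in the proof of Theorem~\ref{theo.partial.version.general.Watson.Lemma.following.Mandelbrojt}. So there is no in-paper argument to compare yours with, and I can only judge your proposal on its own terms. Your reduction of the hypothesis in (ii) to $|f(z)|\le C\exp(-M(|z|^{\alpha}/A))$, the identification of condition (i) with the divergence of $\int_1^\infty M(t^{\alpha})t^{-2}\,dt$, and the proof of (i)$\Rightarrow$(ii) by Poisson majorization of $\log|G(\cdot+i\varepsilon)|$ on the upper half-plane are all correct; this is the standard logarithmic-integral argument, and the one point you should make explicit is that for a bounded holomorphic function the Blaschke and singular inner factors only decrease $\log|G|$, so the majorization by the Poisson integral of the boundary values does hold.

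The genuine gap is in (ii)$\Rightarrow$(i). ``One takes $f$ to be an infinite product (or a Laplace transform of a suitable kernel) whose nodes are placed so that their counting function is comparable to $\alpha\,\mu(t^{\alpha})$'' is a statement of intent, not a construction, and the obstacle you single out as the crux (forcing decay along \emph{every} ray of $H$, hence needing the nodes to follow an $|z|^{\alpha}$-profile) is illusory: a plain product over factors $1+z/b_p$ does the job. Put $n_p:=m_p^{1/\alpha}$, so that $\sum_p 1/n_p<\infty$ by the negation of (i), and set
$$h(z):=\prod_{p=0}^{\infty}\Bigl(1+\frac{z}{n_p}\Bigr)^{-1},\qquad z\in H.$$
The product converges locally uniformly and is zero-free on $H$. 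Since $\Re\zeta\ge 0$ implies $|1+\zeta|^2=1+2\Re\zeta+|\zeta|^2\ge\max(1,|\zeta|^2)$, discarding all factors of index $\ge p$ gives $|h(z)|\le n_0\cdots n_{p-1}\,|z|^{-p}=M_p^{1/\alpha}|z|^{-p}$ for every $p\in\N_0$ and every $z\in H$, uniformly in the direction. As $H$ is simply connected and $h$ never vanishes, $f:=\exp(\alpha\log h)$ is a well-defined, nontrivial holomorphic function on $H$ with $|f(z)|=|h(z)|^{\alpha}\le M_p|z|^{-\alpha p}$, i.e. the bound in (ii) with $C=A=1$. This closes the contrapositive in a few lines, with no counting-function bookkeeping and no kernel construction; as written, your sketch of this half would not pass as a proof.
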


We need also to recall now
the definition of exponent of convergence of a sequence and how it may be computed.

\begin{prop}[\cite{HOLL},\ p.\ 65]
Let $(c_p)_{p\in\N_0}$ be a nondecreasing sequence of positive real numbers tending to infinity. The \textit{exponent of convergence}
of $(c_p)_p$ is defined as
$$
\lambda_{(c_p)}:=\inf\{\mu>0:\sum_{p=0}^\infty \frac{1}{c_p^{\mu}}\textrm{ converges}\}
$$
(if the previous set is empty, we put $\lambda_{(c_p)}=\infty$). Then, one has
\begin{equation}\label{equaexpoconv}
\lambda_{(c_p)}=\limsup_{p\to\infty}\frac{\log(p)}{\log(c_p)}.
\end{equation}
\end{prop}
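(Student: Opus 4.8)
The plan is to establish the two inequalities $\lambda_{(c_p)}\le\mu^*$ and $\lambda_{(c_p)}\ge\mu^*$, where we abbreviate $\mu^*:=\limsup_{p\to\infty}\log(p)/\log(c_p)$; since $c_p\to\infty$ we have $\log(c_p)>0$ for all large $p$, so $\mu^*$ is well defined as an element of $[0,\infty]$. Both inequalities reduce to elementary comparison estimates for series of positive terms, the only point requiring a little care being the use of the monotonicity of $(c_p)_p$ in the lower bound.

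For $\lambda_{(c_p)}\le\mu^*$ I would fix an arbitrary $\mu>\mu^*$ (if $\mu^*=\infty$ there is nothing to prove) and choose $\nu$ with $\mu^*<\nu<\mu$. By definition of $\limsup$ there is $p_0$ such that $\log(p)<\nu\log(c_p)$, i.e.\ $p<c_p^{\nu}$, for every $p\ge p_0$; hence $c_p^{-\mu}<p^{-\mu/\nu}$ for $p\ge p_0$. Since $\mu/\nu>1$, the series $\sum_p p^{-\mu/\nu}$ converges, and therefore $\sum_p c_p^{-\mu}$ converges by comparison. Thus every $\mu>\mu^*$ belongs to the set defining $\lambda_{(c_p)}$, whence $\lambda_{(c_p)}\le\mu^*$.

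For $\lambda_{(c_p)}\ge\mu^*$ I would take any $\mu$ for which $S:=\sum_{q=0}^{\infty}c_q^{-\mu}<\infty$ and exploit that $(c_p)_p$ is nondecreasing: for each $p$ we have $c_q^{-\mu}\ge c_p^{-\mu}$ whenever $0\le q\le p$, so $(p+1)c_p^{-\mu}\le\sum_{q=0}^{p}c_q^{-\mu}\le S$, that is, $\log(p+1)-\log S\le\mu\log(c_p)$. Dividing by $\log(c_p)\to\infty$ and taking $\limsup$ as $p\to\infty$ gives $\mu^*=\limsup_p\log(p+1)/\log(c_p)\le\mu$. Hence every admissible exponent is at least $\mu^*$, so $\lambda_{(c_p)}\ge\mu^*$; and if there is no admissible $\mu$ at all, then necessarily $\mu^*=\infty$, so $\lambda_{(c_p)}=\infty$ by the stated convention, again consistent with \eqref{equaexpoconv}. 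Combining the two inequalities proves the proposition.

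The argument is routine throughout; the one step deserving attention is the lower bound. A naive attempt along a subsequence $(p_k)$ realizing the $\limsup$ only gives a lower bound for $\sum_k c_{p_k}^{-\mu}$ in terms of $\sum_k p_k^{-\mu/\nu}$, whose divergence cannot be guaranteed without control on the growth of $p_k$. The monotonicity of $(c_p)_p$ circumvents this by replacing the single-index estimate with the partial-sum bound $(p+1)c_p^{-\mu}\le\sum_{q=0}^p c_q^{-\mu}$, which holds for every $p$ simultaneously — that is the key device making the lower bound work.
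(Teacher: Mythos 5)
Your proof is correct. The paper offers no proof of this proposition at all --- it is quoted as a known fact from \cite{HOLL}, p.~65 --- so there is nothing internal to compare against; your two-sided comparison argument, and in particular the partial-sum estimate $(p+1)c_p^{-\mu}\le\sum_{q=0}^{p}c_q^{-\mu}$ exploiting monotonicity for the lower bound, is the standard textbook proof of this classical statement and all the steps (choice of the intermediate exponent $\nu$, handling of the degenerate cases $\mu^*=0$ and $\mu^*=\infty$) check out.
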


We are ready to characterize quasianalyticity in terms of the exponent of convergence $\lambda_{(m_p)}$.

\begin{theo}(generalized Watson's lemma, partial version) \label{theo.partial.version.general.Watson.Lemma.following.Mandelbrojt}
Let $\ga$ be a positive constant and $G_\ga$ a sectorial region, the following statements hold:
\begin{enumerate}[(i)]
 \item If $\ga>1/\lambda_{(m_p)}$,  then  $\mathcal{\tilde{A}}_{\M}(G_\ga)$ is quasianalytic.
 \item If $\ga<1/\lambda_{(m_p)}$,  then  $\mathcal{\tilde{A}}_{\M}(G_\ga)$ is not quasianalytic.
\end{enumerate}
\end{theo}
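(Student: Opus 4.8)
The strategy is to connect the two parts to the Mandelbrojt criterion via a suitable change of radial variable, exploiting the scale invariance of quasianalyticity. Recall that a function $f\in\tilde{\mathcal A}_{\M}(G_\ga)$ is flat precisely when, on every bounded proper subsector $T\ll G_\ga$, one has $|f^{(0)}(z)|=|f(z)|\le C_T A_T^p M_p |z|^p$ for all $p$ (taking the $p$-th term of the asymptotic expansion with all $a_k=0$), and conversely such bounds on $f$ alone suffice to read off flatness. So the whole question is about holomorphic functions on a sector of opening $\pi\ga$ satisfying bounds of the form $|f(z)|\le C A^p M_p|z|^p$, i.e.\ $|f(z)|\le C A^p M_p / |w|^p$ after the substitution $w=1/z$, which maps a sector of opening $\pi\ga$ (bisected by $d=0$) conformally onto another sector of the same opening. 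Passing to the variable $\zeta=w^{1/\a}$ with $\a=\ga$ unfolds that sector of opening $\pi\ga$ into the right half-plane $H=\{\Re\zeta>0\}$ (when $\ga<1$; for $\ga\ge 1$ one must instead work on $\mathcal R$ and use a branch argument, or reduce to a subsector, which is harmless for the non-quasianalytic direction and requires the opposite inequality for the quasianalytic one — I would handle the covering/branch bookkeeping here carefully). Under this change of variable the bound $|f|\le CA^pM_p/|w|^p$ becomes exactly $|g(\zeta)|\le CA^pM_p/|\zeta|^{\ga p}$, which is the hypothesis of the Mandelbrojt theorem with exponent $\a=\ga$.

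For part (i): assume $\ga>1/\lambda_{(m_p)}$. Then $1/\ga<\lambda_{(m_p)}$, and by the formula $\lambda_{(m_p)}=\limsup_{p\to\infty}\log p/\log m_p$ together with the definition of the exponent of convergence as an infimum, the series $\sum_p (1/m_p)^{1/\ga}$ diverges. The Mandelbrojt theorem (direction (i)$\Rightarrow$(ii)) then forces $g\equiv 0$, hence $f\equiv 0$; thus $\tilde{\mathcal A}_{\M}(G_\ga)$ contains no nontrivial flat function and is quasianalytic. One subtlety: a flat $f$ gives the bound only on each $T\ll G_\ga$, not on all of $G_\ga$; so I would fix $\beta<\ga$ with still $\beta>1/\lambda_{(m_p)}$, work on $G_\beta$ (which contains a genuine sector $S(0,\beta,\rho)$), and either extend the bound to the full infinite sector using that $|f|$ is bounded near the outer boundary — actually on a bounded sector one only needs the half-plane result applied after inversion, which sends the bounded sector to an unbounded one, so this is automatic — or invoke the half-plane statement directly on the image of $S(0,\beta)$. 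The key arithmetic point is simply: $\ga>1/\lambda_{(m_p)}\iff 1/\ga<\lambda_{(m_p)}\iff\sum(1/m_p)^{1/\ga}=\infty$.

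For part (ii): assume $\ga<1/\lambda_{(m_p)}$, so $1/\ga>\lambda_{(m_p)}$ and $\sum_p(1/m_p)^{1/\ga}$ converges. By Mandelbrojt (the failure of (ii), which is equivalent to the failure of (i)), there exists a nonzero $g\in\mathcal H(H)$ with $|g(\zeta)|\le CA^pM_p/|\zeta|^{\ga p}$ on $H$. Transporting back through $\zeta\mapsto\zeta^\a=w$, $w\mapsto 1/w=z$, I obtain a nonzero holomorphic $f$ on a sector of opening $\pi\ga$ bisected by $d=0$ with $|f(z)|\le CA^pM_p|z|^p$ there. One must still check that this $f$ lies in $\tilde{\mathcal A}_{\M}(G_\ga)$ and is flat: flatness is exactly the stated bound (with $a_k\equiv 0$), and membership in the class follows by Proposition~\ref{propcotaderidesaasin} — equivalently one checks directly the asymptotic-expansion definition, which the bound $|f(z)-0|\le CA^pM_p|z|^p$ provides on every $T\ll G_\ga$. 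Hence $\tilde{\mathcal A}_{\M}(G_\ga)$ contains a nontrivial flat function and is not quasianalytic.

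\textbf{Main obstacle.} The genuinely delicate point is the conformal bookkeeping when $\ga\ge 1$: the map $w\mapsto w^{1/\ga}$ is no longer injective on a full sector of opening $\pi\ga$ sitting inside $\mathcal R$, so one cannot literally land in the half-plane $H\subset\C$. For part (i) this is fine — one can shrink to $\beta$ slightly larger than $1/\lambda_{(m_p)}$ and still below $\ga$, and as long as $\beta$ can be taken $<2$ (which it always can, since we only need some such $\beta$ and quasianalyticity on $G_\beta$ implies it on $G_\ga\supset$ no, wait: a larger sector has \emph{fewer} flat functions, so quasianalyticity of $\tilde{\mathcal A}_\M(G_\beta)$ for some $\beta<\ga$ does \emph{not} immediately give it for $G_\ga$) — so in fact for (i) I would argue on $G_\ga$ itself after reducing modulo $2$: if $\ga\ge 2$ the class is trivially quasianalytic since a half-plane already is, using Mandelbrojt with $\a=1$ and $\sum 1/m_p=\infty$ (which holds here because $\lambda_{(m_p)}<\ga$ is compatible but we need $\lambda_{(m_p)}\le 1$; if $\lambda_{(m_p)}>1$ then the relevant $\ga$ with $\ga>1/\lambda_{(m_p)}$ can be just above $1/\lambda_{(m_p)}<1$...). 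This case analysis — isolating when the image sector fits in $\mathbb C$ versus covering $\mathcal R$, and matching it with the range of $\ga$ dictated by $\lambda_{(m_p)}$ — is where the real care is needed; the rest is a routine translation between the growth bound $|f|\le CA^pM_p|z|^p$ and the Mandelbrojt hypothesis, plus the elementary identity relating $\lambda_{(m_p)}$, the series $\sum(1/m_p)^{1/\ga}$, and the inequality between $\ga$ and $1/\lambda_{(m_p)}$.
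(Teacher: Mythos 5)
Your overall skeleton is the paper's: reduce flatness to the bounds $|f(z)|\le C_TA_T^pM_p|z|^p$, translate the inequality $\ga\gtrless 1/\lambda_{(m_p)}$ into divergence/convergence of $\sum_p(1/m_p)^{1/\ga}$ via the definition of the exponent of convergence, and shuttle between the sector and the half-plane $H$ by power maps so as to invoke Mandelbrojt. Your part (ii) is essentially identical to the paper's and is correct: the map $w\mapsto w^{-1/\ga}$ goes \emph{from} $S_\ga\subset\mathcal{R}$ \emph{into} $H$, is single-valued for every $\ga>0$ because $\mathcal{R}$ is simply connected, and turns the Mandelbrojt bound into $|g(w)|\le CA^pM_p|w|^p$, giving a nontrivial flat element of $\tilde{\mathcal{A}}_{\M}(G_\ga)$.

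The gap is in part (i), precisely at the point you yourself flag as the ``main obstacle'' and then leave unresolved. Two distinct problems with mapping the sector \emph{onto} $H$: first, even for small $\b$, inverting a bounded subsector $T=S(0,\b,r)$ and raising to the power $1/\b$ lands you in $\{\zeta\in H:|\zeta|>(1/r)^{1/\b}\}$, not in all of $H$, so Mandelbrojt as stated does not apply without an extra translation step (your parenthetical ``so this is automatic'' is not a proof); second, when $1/\lambda_{(m_p)}\ge 2$ every admissible $\b$ exceeds $2$ and your unfolding is not single-valued inside $\C$ (though it would be injective on $\mathcal{R}$, a way out you do not take), and your attempted case analysis trails off without a conclusion. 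The paper disposes of both difficulties at once by reversing the direction of the map: it sends $H$ \emph{into} $T$ via $z(w)=1/(w+(1/r)^{1/\b})^{\b}$, so that $g=f\circ z$ is a bona fide holomorphic function on $H\subset\C$ for every $\b>0$ with no case distinction, and the elementary inequality $|w+(1/r)^{1/\b}|>|w|$ on $H$ converts the flatness bound on $T$ into $|g(w)|\le C_TA_T^pM_p|w|^{-\b p}$ on all of $H$; Mandelbrojt then gives $g\equiv 0$, hence $f\equiv 0$ on the image region and on $G_\ga$ by the identity theorem. Incidentally, your worry that quasianalyticity on a region of smaller opening would not transfer to $G_\ga$ is unfounded for the same reason: a flat function on $G_\ga$ restricts to a flat function on the smaller region, and its vanishing there propagates to $G_\ga$ by the identity theorem.
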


\begin{proof}
(i) Assume that $\ga>1/\lambda_{(m_p)}$.  We take $f\in  \mathcal{\tilde{A}}_{\M}(G_\ga)$ with $f\sim_{\M} \hat{0}$, and consider a proper bounded sector $T\ll G$, where $T=S(0,\b,r)$ with $\ga>\b>1/\lambda_{(m_p)}$. By the definition of
$\M-$asymptotic expansion, there exist $C_T,A_T>0$ such that
$$|f(z)|\leq C_TA_T^p M_p |z|^p, \qquad z\in T, \quad p\in\N_0.$$
We consider the transformation $z(w)=1/(w+(1/r)^{1/\b})^\b$, which maps $H$ into a region $D\subseteq T$, and the holomorphic function
$g:H\ri\C$ defined by $g(w):=f(z(w))$. As for every $w\in H$ we have $|w+(1/r)^{1/\b}|>|w|$, we deduce that
$$|g(w)|=|f(z(w))|\leq \frac{C_TA_T^p M_p}{|(w+(1/r)^{1/\b})^\b|^p}\leq \frac{C_TA_T^p M_p}{|w|^{\b p}}, \qquad w\in H, \quad p\in \N_0.$$
So $g$ verifies the bounds in Theorem~\ref{theo.partial.version.general.Watson.Lemma.following.Mandelbrojt}.(ii). Since $\b>1/\lambda_{(m_p)}$,
by the definition of the exponent of convergence we know that
$$ \sum_{p=0}^{\oo} \left(\frac{1}{m_p} \right)^{1/\b}=\oo.$$
Consequently, by Theorem~\ref{theo.partial.version.general.Watson.Lemma.following.Mandelbrojt} we obtain that $g\equiv0$ and $f\equiv0$, so that  $\mathcal{\tilde{A}}_{\M}(G_\ga)$ is quasianalytic.

\noindent (ii) Assume that $\ga<1/\lambda_{(m_p)}$, what implies, by the very definition of the exponent of convergence, that
$$ \sum_{p=0}^{\oo} \left(\frac{1}{m_p} \right)^{1/\ga}<\oo.$$
Then, by Theorem~\ref{theo.partial.version.general.Watson.Lemma.following.Mandelbrojt} there exist
$f\in \mathcal{H}(H)$, not identically zero and such that there exist $A,C>0$ with
$$|f(z)|\leq \frac{CA^p M_p}{|z|^{\ga p}}, \qquad z\in H, \quad p\in \N_0.$$
The function $w\to w^{-1/\ga}$ maps the sector $S_\ga$ into $H$. We consider the function $g(w)=f(w^{-1/\ga})$, holomorphic in $S_\ga$, and
we have that
$$|g(w)|=|f(w^{-1/\ga})|\leq C A^p M_p |w|^p, \qquad w\in S_\ga, \quad p\in \N_0.$$
Consequently, $g\not\equiv 0$ and  $g\sim_{\M} \hat{0}$ in $S_\ga$. We observe that the restriction of $g$ to $G_\ga$ is a nontrivial flat function,
and we deduce that $\mathcal{\tilde{A}}_{\M}(G_\ga)$ is not quasianalytic.
\end{proof}

The following definition is natural.

\begin{defi}%\label{defiordequas}
For a strongly regular sequence $\M$, we define the \textit{order of quasianalyticity}
$$\omega(\M):=\inf \{\ga>0:\mathcal{\tilde{A}}_{\M}(G_{\gamma}) \textrm{ is quasianalytic}\}.
$$
\end{defi}

From Theorem~\ref{theo.partial.version.general.Watson.Lemma.following.Mandelbrojt} we immediately deduce the following result (except for the last statement, which was proved in~\cite{SanzFlat}).

\begin{coro}\label{coroOrderM}
For a strongly regular sequence $\M$, we have
\begin{equation*}
\omega(\M)=\liminf_{p\to\infty}
\frac{\log(m_{p})}{\log(p)}=\frac{1}{\lambda_{(m_p)}}.
\end{equation*}
Moreover, $\omega(\M) \in (0,\oo)$.
\end{coro}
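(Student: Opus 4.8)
The plan is to derive the formula for $\omega(\M)$ directly from Theorem~\ref{theo.partial.version.general.Watson.Lemma.following.Mandelbrojt}, using the characterization of the exponent of convergence in \eqref{equaexpoconv}, and then to deal separately with the assertion that $\omega(\M)$ is finite and positive.

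\smallskip

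First I would argue the central equality $\omega(\M)=1/\lambda_{(m_p)}$. By Proposition~\ref{propPropiedlcmg}(i), since $\M$ is (lc) and satisfies (snq), the sequence $\bm$ is nondecreasing and tends to infinity, so $\lambda_{(m_p)}$ is well defined and \eqref{equaexpoconv} applies, giving $\lambda_{(m_p)}=\limsup_{p\to\infty}\log(p)/\log(m_p)$. Part (i) of Theorem~\ref{theo.partial.version.general.Watson.Lemma.following.Mandelbrojt} says that every $\ga>1/\lambda_{(m_p)}$ lies in the set $\{\ga>0:\mathcal{\tilde{A}}_{\M}(G_\ga)\text{ is quasianalytic}\}$, so the infimum of that set is $\le 1/\lambda_{(m_p)}$. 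Part (ii) says that every $\ga<1/\lambda_{(m_p)}$ gives a non-quasianalytic class, hence does not belong to that set; therefore the infimum is $\ge 1/\lambda_{(m_p)}$. Combining the two inequalities yields $\omega(\M)=1/\lambda_{(m_p)}$. The identity $1/\lambda_{(m_p)}=\liminf_{p\to\infty}\log(m_p)/\log(p)$ is then just the elementary fact that $\liminf$ of a sequence of positive reals is the reciprocal of the $\limsup$ of the reciprocals, applied to $\log(m_p)/\log(p)$ (which is eventually positive since $m_p\to\infty$); one only needs to be slightly careful with the first few terms where $m_p$ or $p$ may be small, but since the liminf/limsup ignore any finite initial segment this causes no trouble.

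\smallskip

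Next I would prove $\omega(\M)\in(0,\infty)$. For positivity, I would use property (snq) via Corollary~\ref{petzche13}: there exist $\ep>0$ and a sequence $\mathbb{L}$ with $\bl\simeq\bm$ such that $(L_p(p!)^{-\ep})_{p\in\N_0}$ is (lc), so its quotient sequence $(\ell_p/(p+1)^{\ep})_p$ (up to the usual normalization) is nondecreasing and in particular $\ell_p\ge c\,p^{\ep}$ for large $p$ and some $c>0$. Since $\bl\simeq\bm$, also $m_p\ge c'p^{\ep}$ for large $p$, whence $\log(m_p)/\log(p)\ge \ep+o(1)$ and $\omega(\M)=\liminf\log(m_p)/\log(p)\ge\ep>0$. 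Alternatively, and perhaps more cleanly, one can invoke Proposition~\ref{petzche11}(ii): (snq) gives $k\ge2$ with $\liminf_p m_{kp}/m_p>1$, which forces polynomial-type lower growth of the nondecreasing sequence $\bm$ and hence $\lambda_{(m_p)}<\infty$, i.e. $\omega(\M)>0$. For finiteness of $\omega(\M)$, i.e. $\lambda_{(m_p)}>0$, I would use property (mg): by Proposition~\ref{propPropiedlcmg}(ii.3), $m_p^p\le A^{2p}M_p$, and since $\M$ is (lc), $M_p\le m_{p-1}^p$, hence $M_p^{1/p}\le m_{p-1}$ and, combined with $M_p\le m_{p-1}M_{p-1}\le\cdots$, one gets an upper bound $m_p\le A^{2}M_p^{1/p}\le A^2 m_{p-1}$ is too weak; instead I would note (mg) directly bounds $M_{2p}\le A^{2p}M_p^2$, i.e. $m_p m_{p+1}\cdots m_{2p-1}\le A^{2p}M_p\le A^{2p}m_{p-1}^p$, and since $\bm$ is nondecreasing the left side is $\ge m_p^p$; thus $m_p\le A^2 m_{p-1}$, giving $m_p\le A^{2p}m_0$, i.e. at most exponential growth of $\bm$. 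Then $\log(m_p)/\log(p)$ behaves like $p\log(A^2)/\log(p)\to\infty$? — no: that would give $\omega(\M)=\infty$, the wrong direction. The correct reading is that exponential growth of $m_p$ makes $\sum 1/m_p^\mu$ converge for all $\mu>0$, so $\lambda_{(m_p)}=0$ and $\omega(\M)=\infty$; so (mg) must be used differently. In fact finiteness of $\omega(\M)$ does not need (mg) at all beyond what is already encoded: since $m_p\to\infty$ and $\bm$ is nondecreasing we have $m_p\ge m_1>1$ eventually, which only gives $\lambda_{(m_p)}\le\infty$; to get $\lambda_{(m_p)}>0$ one uses that a (lc) sequence with (mg) cannot grow too fast, namely $M_p^{1/p}\le m_{p-1}$ and $m_{2p}/m_p$ bounded (Proposition~\ref{propPropiedlcmg}(ii.2.c)) force $m_p\le C p^{\,d}\log$-type bounds — more precisely $m_{2^k}\le D^k m_1$, so $m_p\le m_1 D^{\log_2 p}=m_1 p^{\log_2 D}$, polynomial growth — hence $\log(m_p)/\log(p)\le \log_2 D+o(1)$ and $\omega(\M)\le\log_2 D<\infty$. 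This is the step I expect to be the main obstacle: extracting from (mg) the polynomial upper bound $m_p=O(p^{c})$ cleanly, using Proposition~\ref{propPropiedlcmg}(ii.2.c) (i.e. $m_{2p}/m_p$ bounded) together with monotonicity of $\bm$ to run the dyadic iteration $m_{2^k}\le D^k m_1$.

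\smallskip

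So the skeleton is: (1) check the hypotheses needed to apply \eqref{equaexpoconv} and Theorem~\ref{theo.partial.version.general.Watson.Lemma.following.Mandelbrojt} (namely $\bm$ nondecreasing, $\to\infty$, from Proposition~\ref{propPropiedlcmg}(i)); (2) squeeze $\omega(\M)$ between $1/\lambda_{(m_p)}$ from above (Theorem, part (i)) and from below (Theorem, part (ii)) to get equality; (3) rewrite $1/\lambda_{(m_p)}$ as $\liminf_p\log(m_p)/\log(p)$; (4) get $\omega(\M)>0$ from (snq) through Corollary~\ref{petzche13} or Proposition~\ref{petzche11}(ii); (5) get $\omega(\M)<\infty$ from (mg) through the polynomial bound $m_p=O(p^c)$ obtained by dyadic iteration of the (ii.2.c) estimate. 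Steps (1)--(3) are essentially bookkeeping once the preceding results are in hand; the genuine content of the ``moreover'' clause is steps (4) and (5), and among those, the (mg)-to-finiteness implication in step (5) is the one requiring the most care.
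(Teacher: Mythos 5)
Your proof of the displayed equalities is exactly the paper's: the authors state that the corollary is ``immediately deduced'' from Theorem~\ref{theo.partial.version.general.Watson.Lemma.following.Mandelbrojt}, which is precisely your squeeze of $\omega(\M)$ between $1/\lambda_{(m_p)}$ from above (part (i)) and below (part (ii)), followed by the reciprocal identity from \eqref{equaexpoconv}. Where you diverge is the ``moreover'' clause: the paper does not prove $\omega(\M)\in(0,\infty)$ at all but cites \cite{SanzFlat}, whereas you supply a self-contained argument, and your two final arguments are correct. Positivity: Corollary~\ref{petzche13} gives $\bl\simeq\bm$ with $(\ell_p(p+1)^{-\ep})_p$ nondecreasing, hence $m_p\geq c\,p^{\ep}$ and $\liminf_p\log(m_p)/\log(p)\geq\ep>0$ (the route via Proposition~\ref{petzche11}(ii) works equally well). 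Finiteness: $D:=\sup_p m_{2p}/m_p<\infty$ from Proposition~\ref{propPropiedlcmg}(ii.2.c) plus monotonicity of $\bm$ yields $m_p\leq C p^{\log_2 D}$ by dyadic iteration, so $\limsup_p\log(m_p)/\log(p)\leq\log_2 D<\infty$. This buys you independence from the external reference at the cost of a page of extra work. One presentational caveat: the finiteness paragraph as written contains several abandoned false starts (the bound $m_p\le A^2m_{p-1}$, the ``wrong direction'' digression) before settling on the correct dyadic argument; in a final write-up these should be excised, since only the last version is both correct and needed.
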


\begin{rema}
Consider a pair of equivalent sequences $\bM$ and $\bM'$. For any sectorial region~$G$,
 $\tilde{\mathcal{A}}_{\M}(G)$ and $\tilde{\mathcal{A}}_{\M'}(G)$ coincide, so it is clear that $\omega(\M)=\omega(\M')$.
\end{rema}

In order to study quasianalyticity in case the opening of the sectorial region equals the limiting value $\pi \o(\M)$, we need to introduce the notion of proximate order,
appearing in the theory of
growth of entire functions and developed, among others, by E. Lindel\"of, G. Valiron~\cite{Valiron42}, B. Ja. Levin~\cite{Levin}, A. A. Goldberg and I. V. Ostrosvkii~\cite{GoldbergOstrowskii} and L. S. Maergoiz~\cite{Maergoiz}.

\begin{den}\label{OAD:1}
%\nomenclature[$8$]{$\ro(r)$}{Proximate order\nomrefpage }
We say a real function $\ro(r)$, defined on $(c,\oo)$ for some $c\ge 0$, is a {\it proximate order},
if the following hold::
 \begin{enumerate}[(A)]
  \item $\ro$ is continuous and piecewise continuously differentiable in $(c,\oo)$ (meaning that it is differentiable except possibly at a sequence of points, tending to infinity, at any of which it is continuous and has distinct finite lateral derivatives),\label{OA1:1}
  \item $\ro(r) \geq 0$ for every $r>c$,\label{OA2:1}
  \item $\lim_{r \ri \oo} \ro(r)=\ro< \oo$, \label{OA3:1}
  \item $\lim_{r  \ri \oo} r \ro'(r) \ln r = 0$. \label{OA4:1}
 \end{enumerate}
\end{den}

Under the assumption that the auxiliary function $d_{\M}(t)=\log(M(t))/\log t$, where $M(t)$ was defined in \eqref{equadefiMdet}, is a proximate order, the second author~\cite{SanzFlat} provided flat functions in the optimal sectors $S_{\o(\M)}$ by applying results of L. S.~Maergoiz~\cite{Maergoiz}. So, we have the following final statement in this respect.

\begin{theo}[Watson's Lemma, \cite{SanzFlat}]\label{coroWatsonlemma}
Suppose $\M$ is strongly regular and such that $d_{\M}(t)$ is a proximate order, and let $\ga>0$ and a sectorial region $G_\ga$ be given. The following statements are equivalent:
\begin{itemize}
\item[(i)] $\tilde{\mathcal{A}}_{\M}(G_{\gamma})$ is quasianalytic.
%, i.e., it does not contain nontrivial flat functions (in other words, the Borel map is injective in this class).
\item[(ii)] $\gamma>\omega(\M)$.
\end{itemize}
\end{theo}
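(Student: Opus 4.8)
The plan is to reduce Theorem~\ref{coroWatsonlemma} to the partial version already proved (Theorem~\ref{theo.partial.version.general.Watson.Lemma.following.Mandelbrojt}) together with Corollary~\ref{coroOrderM}, filling in the single borderline case $\gamma=\omega(\M)$. By Corollary~\ref{coroOrderM} we have $\omega(\M)=1/\lambda_{(m_p)}$, so for $\gamma>\omega(\M)$ one gets $\gamma>1/\lambda_{(m_p)}$ and part~(i) of Theorem~\ref{theo.partial.version.general.Watson.Lemma.following.Mandelbrojt} immediately gives quasianalyticity; similarly for $\gamma<\omega(\M)$ part~(ii) gives non-quasianalyticity. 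Hence the content of the theorem, beyond what is already known, is exactly the assertion that \emph{$\tilde{\mathcal{A}}_{\M}(G_{\omega(\M)})$ is not quasianalytic}, which would then force the implication (i)$\Rightarrow$(ii) in its stated form: if $\gamma\le\omega(\M)$ then either $\gamma<\omega(\M)$ (not quasianalytic by the partial version) or $\gamma=\omega(\M)$ (not quasianalytic by the borderline case), so quasianalyticity implies $\gamma>\omega(\M)$; the converse was already handled.

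So the core step is to produce, under the hypothesis that $d_{\M}$ is a proximate order, a nontrivial flat function in a sectorial region of opening exactly $\pi\,\omega(\M)$. Here is where the hypothesis is used and where the work of L.~S.~Maergoiz~\cite{Maergoiz} enters, exactly as in~\cite{SanzFlat}. First I would recall that since $d_{\M}(t)=\log M(t)/\log t$ is a proximate order with $\lim_{t\to\infty}d_{\M}(t)=1/\omega(\M)$ (this limit being part of the proximate order hypothesis, consistent with $\limsup_{t\to\infty}d_{\M}(t)=1/\omega(\M)$), one may invoke Maergoiz's construction of a holomorphic function $V$ in a sector of suitable opening whose restriction to the ray $\arg z=0$ behaves like $\exp\big(-t^{1/\omega(\M)}\ell(t)\big)$ for an appropriate slowly varying $\ell$ governed by $d_{\M}$, in such a way that $|V|$ on rays inside $S_{\omega(\M)}$ decays faster than any $M_p|z|^p$-type lower bound coming from $M(t)$. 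Then define the candidate flat function essentially as $G(z)=\exp\big(-a/z^{?}\big)$-type object composed appropriately — more precisely, as in~\cite{SanzFlat}, one transplants Maergoiz's function via a power change of variable to obtain $f\in\mathcal H(S_{\omega(\M)})$ with $f\sim_{\M}\hat 0$ and $f\not\equiv 0$; restricting to $G_{\omega(\M)}$ yields the desired nontrivial flat function. Finally, combine everything: the existence of this flat function shows $\tilde{\mathcal{A}}_{\M}(G_{\omega(\M)})$ is not quasianalytic, and assembling the three regimes $\gamma<\omega(\M)$, $\gamma=\omega(\M)$, $\gamma>\omega(\M)$ with Corollary~\ref{coroOrderM} gives the stated equivalence.

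The main obstacle — indeed the whole substance of the proof — is the borderline construction of a flat function in the optimal sector $S_{\omega(\M)}$. Everything away from the boundary opening is a direct quotation of Theorem~\ref{theo.partial.version.general.Watson.Lemma.following.Mandelbrojt} and an arithmetic identity from Corollary~\ref{coroOrderM}; but at $\gamma=\omega(\M)$ the Mandelbrojt/exponent-of-convergence machinery is silent (the series $\sum (1/m_p)^{1/\omega(\M)}$ may converge or diverge), and only the finer analytic information encoded in $d_{\M}$ being a genuine proximate order lets one run Maergoiz's theory of functions of proximate order to build the required rapidly decreasing holomorphic function. I would therefore structure the proof as: (1) state the equivalence $\omega(\M)=1/\lambda_{(m_p)}$; (2) dispatch $\gamma>\omega(\M)$ and $\gamma<\omega(\M)$ via Theorem~\ref{theo.partial.version.general.Watson.Lemma.following.Mandelbrojt}; (3) carry out, or cite in detail from~\cite{SanzFlat} and~\cite{Maergoiz}, the construction of the flat function for $\gamma=\omega(\M)$; (4) conclude. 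Since step~(3) is precisely the content proved in~\cite{SanzFlat}, the honest write-up mostly amounts to quoting it, with the rest of the argument being the short bookkeeping described above.
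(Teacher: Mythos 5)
Your proposal is correct and follows essentially the same route as the paper, which states this theorem as a quotation from~\cite{SanzFlat} and explicitly identifies the substance as the construction of nontrivial flat functions in the optimal sector $S_{\omega(\M)}$ via Maergoiz's theory of proximate orders, the non-borderline cases being exactly the partial version (Theorem~\ref{theo.partial.version.general.Watson.Lemma.following.Mandelbrojt}) combined with $\omega(\M)=1/\lambda_{(m_p)}$ from Corollary~\ref{coroOrderM}. Your decomposition into the three regimes and the observation that only $\gamma=\omega(\M)$ requires the proximate-order hypothesis is precisely the intended argument.
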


\section{Strongly Regular Sequences and proximate orders}

In this section we will present different characterizations of $d_{\M}$ being a proximate order for a strongly regular sequence $\M$.

\noindent We start by recalling the following definitions and facts, mainly taken from
 the book of A. A. Goldberg and I. V. Ostrovskii~\cite{GoldbergOstrowskii}.

\begin{defi}[\cite{GoldbergOstrowskii}, p.\ 43]%\label{defiordefunc}
Let $\a(t)$ be a nonnegative and nondecreasing function in $(c,\infty)$ for some $c\ge 0$ (we write $\a\in\Lambda$).
The \textit{order} of $\a$ is defined as
$$
\rho=\rho[\a]:=\limsup_{t\to\infty}\frac{\log^+\a(t)}{\log t}\in[0,\infty],
$$
where $\log^+=\max(\log,0)$. $\a(t)$ is said to have finite order if $\rho<\infty$.
\end{defi}

We are firstly interested in determining the order of the function $M(t)\in\Lambda$ (defined in (\ref{equadefiMdet}) for every strongly regular sequence $\M$).
We will need the following fact, which can be found in~\cite{mandelbrojt}: if we consider the \textit{counting function} for the sequence of quotients $\bm$,
$\nu:(0,\infty)\to\N_0$ given by
\begin{equation}\label{equadefinuder}
\nu(t):=\#\{j:m_j\le t\},
\end{equation}
then one has that
\begin{equation}\label{equarelaMdetnuder}
M(t)=\int_0^t\frac{\nu(r)}{r}\,dr,\qquad t>0.
\end{equation}

%To compute this order of $M(r)$ we have the following result.
The following result consists of the right assertions included in Theorem 2.14 in~\cite{SanzFlat}, where, as aforementioned, it was incorrectly obtained that $\lim_{t\to\infty}\log M(t)/\log t$ exists.

\begin{theo} %\label{teorordeMdetiguallimidder}
Let $\M$ be strongly regular. %, $\m$ the sequence of its quotients and $M(t)$ its associated function.
Then, the order of $M(t)$ is given by
\begin{equation}\label{equaordeMdet}
\rho[M]=\limsup_{t\to\infty}\frac{\log M(t)}{\log t}=\limsup_{p\to\infty}\frac{\log(p)}{\log(m_{p})}.
\end{equation}
Consequently, by Corollary~\ref{coroOrderM} we have
\begin{equation*}
\rho[M]=\frac{1}{\omega(\M)}\in(0,\infty).
\end{equation*}
\end{theo}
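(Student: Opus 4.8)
The plan is to obtain $\rho[M]$ by comparing it with the order of the counting function $\nu(t)=\#\{j:m_j\le t\}$, using the integral representation $M(t)=\int_0^t\nu(r)r^{-1}\,dr$ of \eqref{equarelaMdetnuder}, and then to compute the order of $\nu$ directly in terms of the quotients $m_p$. As a preliminary observation, since $\M$ is strongly regular it is (lc) and (snq), so Proposition~\ref{propPropiedlcmg}(i) gives $m_p\to\infty$; hence $\nu(t)$ is finite for each $t>0$, and for $t$ large we have $1\le\nu(t)<\infty$, so we may set $p(t):=\max\{j:m_j\le t\}$, for which $\nu(t)=p(t)+1$, $m_{p(t)}\le t$, and $p(t)\to\infty$ as $t\to\infty$.

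The first step is to show that $\rho[M]$ coincides with the order $\rho[\nu]:=\limsup_{t\to\infty}\log^+\nu(t)/\log t$ of $\nu$. Since $\nu$ is non-decreasing and vanishes on $(0,m_0)$, for $t$ large one has
\[
 M(t)=\int_{m_0}^{t}\frac{\nu(r)}{r}\,dr\le\nu(t)\log\frac{t}{m_0}\le C\,\nu(t)\log t
 \qquad\text{and}\qquad
 M(2t)\ge\int_{t}^{2t}\frac{\nu(r)}{r}\,dr\ge\nu(t)\log 2,
\]
with $C>0$ fixed. As $M(t),\nu(t)\ge 1$ for $t$ large, taking logarithms, dividing by $\log t$ and letting $t\to\infty$ --- using $\log\log t/\log t\to 0$, $\log(2t)/\log t\to 1$, and that $2t\to\infty$ with $t$ --- the first chain gives $\rho[M]\le\rho[\nu]$ and the second gives $\rho[\nu]\le\rho[M]$.

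Next I would compute $\rho[\nu]=\limsup_{p\to\infty}\log p/\log m_p$. For each large $p$ the indices $0,1,\dots,p$ all satisfy $m_j\le m_p$, so $\nu(m_p)\ge p+1$, and since $m_p\to\infty$ this yields $\rho[\nu]\ge\limsup_{p\to\infty}\log(p+1)/\log m_p$. Conversely, for $t$ large the quantity $\log^+\nu(t)/\log t=\log(p(t)+1)/\log t$ is at most $\log(p(t)+1)/\log m_{p(t)}$ (both logarithms being positive for $t$ large, and $t\ge m_{p(t)}$); letting $t\to\infty$, and observing that this is a limit superior taken over a subset of the positive integers, one gets $\rho[\nu]\le\limsup_{p\to\infty}\log(p+1)/\log m_p$. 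Hence $\rho[\nu]=\limsup_{p}\log(p+1)/\log m_p=\limsup_{p}\log p/\log m_p$, the last equality because $\log m_p\to\infty$; by \eqref{equaexpoconv} this is the exponent of convergence $\lambda_{(m_p)}$.

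Combining the two steps gives \eqref{equaordeMdet}, namely $\rho[M]=\lambda_{(m_p)}=\limsup_{p}\log p/\log m_p$ (with $\log^+ M(t)=\log M(t)$ for $t$ large since $M(t)\to\infty$). The final assertion then follows at once from Corollary~\ref{coroOrderM}, which gives $\omega(\M)=1/\lambda_{(m_p)}$ and $\omega(\M)\in(0,\infty)$, whence $\rho[M]=1/\omega(\M)\in(0,\infty)$. All the estimates involved are routine; the one point that needs care is matching the continuous $\limsup$ in $t$ with the discrete $\limsup$ in $p$ in the second step, which is precisely where $m_p\to\infty$ --- a consequence of (snq) --- is used, alongside the elementary bookkeeping with $\log^+$ in the first step.
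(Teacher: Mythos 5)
Your proof is correct and follows essentially the same route as the paper: both pass from $M(t)$ to the counting function $\nu(t)$ via the integral representation \eqref{equarelaMdetnuder} and then identify the order of $\nu$ with the exponent of convergence of $\bm$. The only difference is that the paper simply cites Theorems 2.1.1 and 2.1.8 of Goldberg--Ostrovskii for these two facts, whereas you supply the (correct) elementary estimates proving them directly.
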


\begin{proof}
The first expression for $\rho[M]$ comes from the very definition of the order because $M(t)>1$ for $t$ large enough.
 For the second one, %expression of $\rho[M]$,
we take into account the link given in (\ref{equarelaMdetnuder}) between $M(t)$ and  the counting function $\nu(t)$, %for $\bm$ (as defined in (\ref{equadefinuder})),
which also belongs to $\Lambda$. We may apply Theorem 2.1.1 in~\cite{GoldbergOstrowskii} to deduce that the order of $M(t)$ equals that of $\nu(t)$.
Now, from Theorem 2.1.8 in~\cite{GoldbergOstrowskii} we know that the order of $\nu(t)$ is in turn the exponent of convergence of $\bm$, given by the formula in (\ref{equaexpoconv}).
\end{proof}

The function $d_{\M}(t)$ always verifies conditions (\ref{OA1:1}) and (\ref{OA2:1}) of proximate orders (see Definition~\ref{OAD:1}).
In order to study condition (\ref{OA3:1}), we will use Proposition~\ref{petzche13} to prove the following auxiliary result.

\begin{lemma}\label{lemmaliminfcierto}
Let $\M$ be a strongly regular sequence, then there exists a strongly regular sequence $\L$ such that $\L\approx\M$ and
\begin{equation}\label{des.liminf.L}
\liminf_{p\ri\oo} \log\left(\frac{\ell_p}{L^{1/p}_p}\right)>0,
\end{equation}
where $\bl=(\ell_p)_{p\in\N_0}$ is the sequence of quotients associated with $\L$. Moreover, if $L(t)$ is the associated function of $\L$ we deduce that
\begin{equation}
\lim_{p\ri\oo} \frac{\log(L(\ell_p))}{\log(p)}=1.\label{limit.logfunctionL.over.p.equal.1}
\end{equation}

\end{lemma}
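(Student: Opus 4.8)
The strategy is to produce $\L$ by combining Corollary~\ref{petzche13} with a suitable modification of the sequence of quotients, and then to derive the limit \eqref{limit.logfunctionL.over.p.equal.1} from \eqref{des.liminf.L} together with the moderate growth property. First, I would apply Corollary~\ref{petzche13} to $\M$: since $\M$ is (lc) and (snq), there exist $\ep>0$ and a sequence $\mathbb{L}$ with $\l\simeq\m$ such that $(L_p(p!)^{-\ep})_{p\in\N_0}$ is (lc) and verifies (snq). Because $\l\simeq\m$ and $\M$ is strongly regular, Remark~\ref{remaCambioSucesNoLogarConvex} and Proposition~\ref{propRelacionOrdenes} guarantee that we may take $\L$ to be strongly regular with $\L\approx\M$; note that the sequence of quotients of $(L_p(p!)^{-\ep})_{p\in\N_0}$ is $(\ell_p/(p+1))_{p\in\N_0}$.

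Next, I would establish \eqref{des.liminf.L}. Write $\mathbb{N}^{(\ep)}$ for the (lc) sequence $(N_p)_{p\in\N_0}:=(L_p(p!)^{-\ep})_{p\in\N_0}$, with quotients $n_p=\ell_p/(p+1)$. Since $\mathbb{N}^{(\ep)}$ is (lc) and (snq), hence (mg) is \emph{not} automatic — but $\L$ being strongly regular is (mg), so I would instead argue directly on $\L$. By Proposition~\ref{propPropiedlcmg}(ii.1), $L_p^{1/p}\le \ell_{p-1}\le\ell_p$, so $\log(\ell_p/L_p^{1/p})\ge 0$ trivially; the point is to get a strictly positive \emph{liminf}. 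For this I would use that $\mathbb{N}^{(\ep)}$ satisfies (snq), which by Proposition~\ref{petzche11} (applied to $(p!N_p)_{p\in\N_0}$, which is (lc)) yields $\liminf_{p\to\infty} n_{kp}/n_p>1$ for some $k\ge2$; combining this with the (mg) property of $\L$ (which controls $n_{2p}/n_p$ from above, cf.\ Proposition~\ref{propPropiedlcmg}(ii.2.c)) and the relation $\ell_p=(p+1)n_p$, one extracts, after telescoping $\log n_p$ against $\log p$, the bound $\liminf_p \log(\ell_p/L_p^{1/p})>0$. Concretely: $\log(\ell_p/L_p^{1/p}) = \tfrac1p\sum_{j=0}^{p-1}\log(\ell_p/\ell_j)\ge \tfrac1p\sum_{j=0}^{p-1}\log(\ell_p/\ell_j)$, and the growth gap encoded in (snq) for $\mathbb{N}^{(\ep)}$ forces this Cesàro-type average to stay bounded away from $0$.

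Finally, to obtain \eqref{limit.logfunctionL.over.p.equal.1}, I would use \eqref{equation.M.in.mp} applied to $\L$, namely $L(\ell_p)=\log(\ell_p^p/L_p)$, so that
\begin{equation*}
\frac{\log(L(\ell_p))}{\log p}=\frac{\log\big(p\log(\ell_p/L_p^{1/p})\big)}{\log p}=1+\frac{\log\log(\ell_p/L_p^{1/p})}{\log p}.
\end{equation*}
By \eqref{des.liminf.L} the quantity $\log(\ell_p/L_p^{1/p})$ is bounded below by a positive constant for large $p$, and it is bounded above because (mg) gives $\ell_p^p=m_p^p\lesssim A^{2p}L_p$ (Proposition~\ref{propPropiedlcmg}(ii.3)), i.e.\ $\log(\ell_p/L_p^{1/p})\le 2\log A$ eventually. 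Hence $\log\log(\ell_p/L_p^{1/p})$ stays bounded, the last fraction tends to $0$, and the limit equals $1$, as claimed.

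\textbf{Main obstacle.} The delicate point is the passage from the (snq) condition for the modified sequence $(L_p(p!)^{-\ep})_{p\in\N_0}$ to the \emph{strictly positive} liminf in \eqref{des.liminf.L}; the inequality $\log(\ell_p/L_p^{1/p})\ge0$ is free from (lc), but upgrading it to a uniform positive lower bound requires carefully exploiting that the $(p!)^{-\ep}$ twist does not destroy (snq) — which is exactly what Corollary~\ref{petzche13} was set up to provide — and then translating the multiplicative gap $\liminf n_{kp}/n_p>1$ into the additive Cesàro estimate. Once \eqref{des.liminf.L} is in hand, \eqref{limit.logfunctionL.over.p.equal.1} is a short computation using \eqref{equation.M.in.mp} and the two-sided control of $\ell_p/L_p^{1/p}$.
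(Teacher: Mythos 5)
Your overall skeleton (apply Corollary~\ref{petzche13} to obtain $\L$ with $\bl\simeq\m$ and $(L_p(p!)^{-\ep})_{p\in\N_0}$ logarithmically convex, then deduce \eqref{limit.logfunctionL.over.p.equal.1} from \eqref{des.liminf.L} via $L(\ell_p)=p\log(\ell_p/L_p^{1/p})$ and a two-sided bound coming from (mg)) matches the paper, and your treatment of the second half is essentially the paper's. But there is a genuine gap at the central step, the strictly positive liminf. The paper obtains it from the \emph{logarithmic convexity} of the twisted sequence alone: since $(\ell_p(p+1)^{-\ep})_{p\in\N_0}$ is nondecreasing, telescoping gives $L_p\le\ell_p^p\prod_{j=1}^{p}\bigl(j/(j+1)\bigr)^{\ep j}=\ell_p^p\bigl((p+1)!/(p+1)^{p+1}\bigr)^{\ep}$, and Stirling then yields $\log(\ell_p/L_p^{1/p})\ge\ep\log 2$ for large $p$. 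You instead invoke the (snq)-derived gap $\liminf_{p}n_{kp}/n_p>1$ and claim a ``Cesàro-type'' estimate, but your one displayed ``concrete'' inequality reads $\tfrac1p\sum_{j=0}^{p-1}\log(\ell_p/\ell_j)\ge\tfrac1p\sum_{j=0}^{p-1}\log(\ell_p/\ell_j)$ --- a tautology --- and the estimate is never actually carried out. The idea is salvageable: if $\liminf_j m_{kj}/m_j\ge 1+\delta$, then for $j_0\le j\le p/k$ one has $\log(m_p/m_j)\ge\log(1+\delta)$ while all other terms are nonnegative by monotonicity, whence $\liminf_p\tfrac1p\sum_{j<p}\log(m_p/m_j)\ge\log(1+\delta)/k>0$; note this already works for $\M$ itself (Proposition~\ref{petzche11} applied to $(p!M_p)_{p\in\N_0}$), so your route, once completed, would even let you take $\L=\M$ and dispense with Corollary~\ref{petzche13} for this part. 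As written, however, the key inequality is asserted, not proved.

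Two further slips. First, the quotients of $(L_p(p!)^{-\ep})_{p\in\N_0}$ are $n_p=\ell_p(p+1)^{-\ep}$, not $\ell_p/(p+1)$, so the relation you use is $\ell_p=(p+1)^{\ep}n_p$. Second, in the final computation $\ell_p\ne m_p$ in general; you should apply Proposition~\ref{propPropiedlcmg}(ii.3) directly to $\L$ to get $\ell_p^p\le A^{2p}L_p$. You also do not verify that $\L$ itself is (lc), which is needed before Remark~\ref{remaCambioSucesNoLogarConvex} can be invoked to conclude strong regularity; the paper does this explicitly by writing $\ell_p$ as the product of the two nondecreasing positive sequences $\ell_p(p+1)^{-\ep}$ and $(p+1)^{\ep}$.
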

\begin{proof}
Since $\M$ is (lc) and verifies (snq), by Proposition~\ref{petzche13} there exist $\ep>0$ and a sequence of positive numbers $(L_p)_{p\geq0}$ such that
$\m\simeq\bl$ and $(L_p (p!)^{-\ep})_{p\in\N_0}$ is (lc)
and verifies (snq). Remark~\ref{remaCambioSucesNoLogarConvex}  implies that
$\L$ also satisfies (mg) and (snq). Moreover, as $(L_p (p!)^{-\ep})_{p\in\N_0}$  is (lc), $\L$  also is: 
$$\ell_p=\ell_p (p+1)^{\ep-\ep} \leq \frac{\ell_{p+1}}{(p+2)^\ep} (p+2)^\ep = \ell_{p+1},\quad p\in\N_0.
$$
Now let us show \eqref{des.liminf.L}.
Since the sequence $(\ell_p (p+1)^{-\ep})_{p\in\N_0}$ is nondecreasing, for every  $p,k\in\N_0$ with $p>k$ we have
 $$\ell_k\leq \frac{(k+1)^\ep}{(k+2)^\ep} \ell_{k+1} \leq \ell_p \prod_{j=k}^{p-1} \left( \frac{j+1}{j+2} \right)^\ep= \ell_p \prod_{j=k+1}^{p} \left( \frac{j}{j+1} \right)^\ep.$$
 Consequently,
 $$ L_p=\ell_0\ell_1\cdots \ell_{p-1}\leq \ell^p_p \prod_{j=1}^{p} \left( \frac{j}{j+1} \right)^{\ep j }.$$
We observe that
$$\prod_{j=1}^{p} \left( \frac{j^j}{(j+1)^{j+1}} (j+1) \right)= \frac{(p+1)!}{(p+1)^{p+1}}, $$
so we can write
$$ L_p\leq \ell^p_p  \left(\frac{(p+1)!}{(p+1)^{p+1}} \right)^{\ep }.$$
 By Stirling's formula, $\lim_{p\ri\oo} (p! 2^p) /  p^p = 0<1$, and there exists $p_0\in\N$ such that for every $p\geq p_0$ one has
 $$ L_p\leq \ell^p_p  \left( \frac{1}{2^{p+1}}\right)^{\ep }.$$
 It is clear then that
 $$  \log\left( \frac{\ell_p}{L^{1/p}_p}\right)\geq \ep  \frac{p+1}{p} \log(2) \geq \ep \log(2) >0, \qquad p\geq p_0,$$
and we conclude that $\L$ verifies~(\ref{des.liminf.L}). Finally, using~\eqref{eqmg_mppMp} and~(\ref{equation.M.in.mp}),
there  exist $a>1$ such that
 $$\frac{L(\ell_p)}{p}=\log \left(\frac{\ell_p}{L^{1/p}_p}\right) \leq \log (a)$$
 for every $p\in\N$. For $p\geq p_1$ large enough we know $\log L(\ell_p)>0$, and taking logarithms in the above expression we see that
 \begin{equation}\label{inequality.logM.sup}
\log L(\ell_p) \leq \log (p) + \log(\log(a)).
 \end{equation}
On the other hand, by~(\ref{des.liminf.L}) there exist $\ep_0>0$ and $p_2\in \N$ such that
 $$\log \left(\frac{\ell_p}{L^{1/p}_p}\right) \geq \ep_0 >0$$
 for $p\geq p_2$, so we deduce that
 \begin{equation}\label{inequality.logM.inf}
  \log L(\ell_p) \geq \log (p) + \log(\ep_0).
 \end{equation}
Using~(\ref{inequality.logM.sup}) and~(\ref{inequality.logM.inf}) we see that \eqref{limit.logfunctionL.over.p.equal.1} holds.

 \end{proof}

Thanks to the previous lemma, we can give the following characterization of condition~(\ref{OA3:1}).

\begin{theo}\label{teorcondicion3caracterizacion}
Let $\M$ be a strongly regular sequence. Then
$d_{\M}(t)$ verifies condition~(\ref{OA3:1}) if, and only if,
\begin{equation}\label{limit.logmp.over.logp.omega}
    \lim_{p\ri \oo} \frac{\log (m_p)}{\log(p)}=\o(\M).
\end{equation}
\end{theo}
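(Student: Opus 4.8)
The plan is to connect condition~(\ref{OA3:1}) for $d_{\M}$, which asks that $\lim_{t\to\infty}d_{\M}(t)=\log M(t)/\log t$ exists and equals $1/\omega(\M)$, with the limit~(\ref{limit.logmp.over.logp.omega}). Since by Corollary~\ref{coroOrderM} we have $\omega(\M)=\liminf_{p\to\infty}\log(m_p)/\log(p)$, and since~(\ref{limit.logmp.over.logp.omega}) just upgrades this $\liminf$ to a genuine limit, both directions of the equivalence are really statements about when the relevant $\limsup$ coincides with the $\liminf$. The main tool on the ``sequence side'' will be the identity $M(m_p)=\log(m_p^p/M_p)=p\log(m_p)-\log M_p$ from~(\ref{equation.M.in.mp}), together with the estimate $m_p^p\le A^{2p}M_p$ from~(\ref{eqmg_mppMp}); these give $0\le M(m_p)/p=\log(m_p/M_p^{1/p})\le 2\log A$, i.e. $M(m_p)=O(p)$. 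Dividing~(\ref{equation.M.in.mp}) by $\log(m_p)$ yields
\begin{equation*}
\frac{M(m_p)}{\log(m_p)}=p-\frac{\log M_p}{\log(m_p)},
\end{equation*}
so understanding $d_{\M}$ along the subsequence $t=m_p$ amounts to understanding $\log M_p/\log(m_p)$, and the latter is controlled by $\log(m_p)/\log(p)$ via $\log M_p=\sum_{j<p}\log(m_j)$.

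For the implication (\ref{OA3:1})$\Rightarrow$(\ref{limit.logmp.over.logp.omega}), I would argue by contradiction: if the limit in~(\ref{limit.logmp.over.logp.omega}) fails, then since the $\liminf$ is $\omega(\M)$, there is a subsequence along which $\log(m_{p_k})/\log(p_k)\to L>\omega(\M)$ (possibly $L=\infty$). I would then evaluate $d_{\M}$ at $t=m_{p_k}$ (or just below it) and show, using $M(t)=\int_0^t\nu(r)\,dr/r$ from~(\ref{equarelaMdetnuder}) and the fact that $\nu(m_{p_k}^-)\le p_k$, that $\log M(m_{p_k})/\log(m_{p_k})$ stays bounded away from $1/\omega(\M)$ — more precisely that it approaches $1/L<1/\omega(\M)$ — contradicting convergence of $d_{\M}(t)$ to $1/\omega(\M)$. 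The delicate point here is relating $\log M(t)$ to $\log\nu(t)$ at the point $t=m_{p_k}$: one uses that $M$ is, up to lower-order terms, $\nu(t)\log t$ near a ``jump'', which is where Theorem~2.1.1 of~\cite{GoldbergOstrowskii}-type estimates (already invoked in the previous theorem) or a direct computation with the convexity of $M$ in $\log t$ enters.

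For the converse (\ref{limit.logmp.over.logp.omega})$\Rightarrow$(\ref{OA3:1}), the idea is to pass to the equivalent sequence $\L$ supplied by Lemma~\ref{lemmaliminfcierto}: since $\L\approx\M$ we have $L(t)\le M(At)+c$ and $M(t)\le L(At)+c$ for suitable constants, hence $d_{\L}$ and $d_{\M}$ have the same limit behavior as $t\to\infty$, and likewise $\log(\ell_p)/\log(p)$ and $\log(m_p)/\log(p)$ have the same limit; also $\omega(\L)=\omega(\M)$. So it suffices to prove the claim for $\L$, for which~(\ref{limit.logfunctionL.over.p.equal.1}) already gives $\log L(\ell_p)/\log(p)\to 1$. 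Combining~(\ref{limit.logmp.over.logp.omega}) (now for $\bl$) with~(\ref{limit.logfunctionL.over.p.equal.1}) one gets $\log L(\ell_p)/\log(\ell_p)\to 1/\omega(\M)$ along the subsequence $t=\ell_p$. To upgrade convergence along $(\ell_p)_p$ to convergence of $d_{\L}(t)$ for all $t\to\infty$, I would use monotonicity: for $t\in[\ell_{p-1},\ell_p)$, $M(t)=L(t)$ is squeezed between $L(\ell_{p-1})$ and $L(\ell_p)$ while $\log t$ is squeezed between $\log\ell_{p-1}$ and $\log\ell_p$; since consecutive ratios $\log(\ell_p)/\log(\ell_{p-1})\to 1$ (a consequence of $\log(\ell_p)/\log p\to\omega(\M)<\infty$ together with $\ell_p\to\infty$), the intermediate values of $d_{\L}$ are trapped and forced to the same limit $1/\omega(\M)=1/\omega(\L)$.

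The step I expect to be the main obstacle is the first implication — extracting from a failure of~(\ref{limit.logmp.over.logp.omega}) a contradiction with the \emph{existence} of $\lim_t d_{\M}(t)$. One must be careful because $d_{\M}$ is evaluated at a continuum of points while the hypothesis involves a discrete sequence; the reconciliation goes through~(\ref{equarelaMdetnuder}) and the behavior of $\nu$, and one has to rule out that the ``bad'' behavior of $\log(m_{p_k})/\log(p_k)$ is washed out in the integral defining $M(t)$. I would handle this by choosing the evaluation point $t$ to be $m_{p_k}$ itself (where $\nu$ has just jumped past $p_k$) or a point slightly to its left, and carefully bounding $M(t)$ above and below in terms of $p_k\log(m_{p_k})$ minus $\log M_{p_k}=\sum_{j<p_k}\log m_j$, the latter being $o(p_k\log m_{p_k})$ precisely in the regime $\log m_{p_k}/\log p_k\to\infty$, and comparable otherwise — in all cases yielding $d_{\M}(m_{p_k})\to 1/L\ne 1/\omega(\M)$.
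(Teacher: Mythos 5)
Your proof of the implication \eqref{limit.logmp.over.logp.omega} $\Rightarrow$ (\ref{OA3:1}) is essentially the paper's: pass to the equivalent sequence $\L$ supplied by Lemma~\ref{lemmaliminfcierto}, use \eqref{limit.logfunctionL.over.p.equal.1} to identify the limit of $d_{\L}$ along the subsequence $t=\ell_p$, and then squeeze $d_{\L}(t)$ for $t\in(\ell_{p-1},\ell_p)$ using the monotonicity of $L$ and of $\log$; the paper carries out exactly this squeeze via \eqref{inequality.logM.sup} and \eqref{inequality.logM.inf}, and transfers the conclusion back to $\M$ through $M(t/H)\le L(t)\le M(Ht)$. (Minor slip: you write ``$M(t)=L(t)$'' on $[\ell_{p-1},\ell_p)$; the two functions are only comparable in the above sense, which is what you correctly invoke a few lines earlier and is all that is needed.)

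The forward implication is where you genuinely diverge. The paper argues directly: from $\lim_{t\to\infty}d_{\L}(t)=1/\omega(\M)$ and \eqref{limit.logfunctionL.over.p.equal.1} it computes $\lim_{p\to\infty}\log(\ell_p)/\log(p)$ as a product of two convergent quotients. Your contradiction argument (extract a subsequence with $\log(m_{p_k})/\log(p_k)\to L\in(\omega(\M),\infty]$ and evaluate $d_{\M}$ at $t=m_{p_k}$) also works, and in fact more easily than you anticipate: by \eqref{equation.M.in.mp} and \eqref{eqmg_mppMp} one has $M(m_p)=p\log\bigl(m_p/M_p^{1/p}\bigr)\le 2p\log A$, hence $d_{\M}(m_{p_k})\le\bigl(\log(p_k)+O(1)\bigr)/\log(m_{p_k})$, whose $\limsup$ is at most $1/L<1/\omega(\M)$ --- already a contradiction with $\lim_{t\to\infty}d_{\M}(t)=1/\omega(\M)$, the value of the limit being forced by \eqref{equaordeMdet}. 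No appeal to $\nu$ or to \eqref{equarelaMdetnuder} is required. However, the stronger claim you announce, namely $d_{\M}(m_{p_k})\to 1/L$ ``in all cases'', is not justified as written: it would need the lower bound $\log M(m_{p_k})\ge\log(p_k)-o(\log(m_{p_k}))$, i.e.\ that $\beta_{p_k}=M(m_{p_k})/p_k$ does not decay too fast, and for a general strongly regular $\M$ one only knows $\beta_p\ge0$. Securing $\liminf_p\beta_p>0$ is precisely the content of Lemma~\ref{lemmaliminfcierto} and requires replacing $\M$ by an equivalent sequence (via Petzsche's result, Corollary~\ref{petzche13}); your heuristic that $\log M_{p_k}=\sum_{j<p_k}\log(m_j)$ is ``comparable'' to $p_k\log(m_{p_k})$ controls a ratio but not the difference $p_k\log(m_{p_k})-\log M_{p_k}$ from below. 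So keep the one-sided estimate and the resulting contradiction, and drop the claim of the exact limit $1/L$.
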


\begin{proof}
By Lemma~\ref{lemmaliminfcierto} there exists a strongly regular
sequence $\L$ such that $\L\approx\M$ and $\L$ verifies~(\ref{limit.logfunctionL.over.p.equal.1}).
We consider  $\bl=(\ell_p)_{p\in\N_0}$, the sequence of quotients of $\L$, and $L(t)$, its
  associated function.

According to \eqref{equaordeMdet}, in case $d_{\M}(t)$ verifies condition~(\ref{OA3:1}) we will have that $\lim_{t\ri\oo} d_{\M}(t)=1/\o(\M)$. There exists $H>0$ such that for every $p\in\N_0$,
$H^{-p}M_p\leq L_p \leq H^p M_p$, and so
\begin{equation}\label{equaDesigMtLtEquiv}
M(\frac{t}{H})=\sup_{p\in\N_0} \log\left(\frac{t^p}{H^pM_p}\right)\leq L(t)=\sup_{p\in\N_0} \log\left(\frac{t^p}{L_p}\right) \leq \sup_{p\in\N_0}
\log\left(\frac{H^p t^p}{M_p}\right)=M(Ht).
\end{equation}
Consequently,
\begin{equation}\label{equaLimitddeL}
\lim_{t\ri\oo} d_{\L}(t)=\lim_{t\ri\oo}\frac{ \log(L(t))}{\log(t)}= 1/\o(\M).
\end{equation}
Using~(\ref{limit.logfunctionL.over.p.equal.1}) and \eqref{equaLimitddeL} we see that
$$\lim_{p\ri\oo} \frac{\log (\ell_p)}{\log(p)}=\lim_{p\ri\oo} \frac{\log (\ell_p)}{\log L(\ell_p)} \frac{\log L(\ell_p)}{\log(p)} = \frac{1}{\ro[L]}=\o(\L)=\o(\M).$$
As $\M$ and $\L$ are strongly regular, we have $\m\simeq\l$, i.e. there exists $a>1$ such that $a^{-1} m_p\leq l_p \leq a m_p$ for every $p\in\N_0$. It is clear then that
$$\lim_{p\ri\oo} \log(m_p)/\log(p)=\lim_{p\ri\oo} \log(\ell_p)/\log(p)=\o(\M).
$$

Conversely, let us assume now that $\lim_{p\ri\oo} \log(m_p)/\log(p)=\o(\M)$. As before, this amounts to
$$\lim_{p\ri \oo} \frac{\log (\ell_p)}{\log(p)}=\o(\M).$$
Using~(\ref{inequality.logM.sup}) and~(\ref{inequality.logM.inf}), for every $t\in(\ell_{p-1},\ell_p)$ and $p$ large enough, we see that
  $$\frac{\log (p-1) + \log(\ep)}{\log(\ell_{p})} \leq \frac{\log(L(\ell_{p-1}))}{\log(\ell_{p})}\leq \frac{\log(L(t))}{\log(t)}\leq \frac{\log(L(\ell_p))}{\log(\ell_{p-1})}\leq \frac{\log (p) + \log(\log(a))}{\log(\ell_{p-1})}. $$
   Taking limits we deduce that
  $\lim_{t\ri\oo} d_{\L}(t)= 1/\o(\M)$, and thanks to \eqref{equaDesigMtLtEquiv} we conclude that $\lim_{t\ri\oo} d_{\M}(t)=1/\o(\M)$, as desired.
 \end{proof}

In the next result we assume that condition (\ref{OA3:1}) for $d_{\M}$ holds, and obtain a characterization of condition (\ref{OA4:1}). This result follows the ideas in the proof of Proposition 4.9 in \cite{SanzFlat}, but takes into account that $d_{\M}$ need not to be monotone, as it was thought to be true at that moment.

\begin{prop}\label{propcaracdderordenaprox}
Let $\M$ be a strongly regular sequence. If we assume that $\lim_{t\ri\oo} d_{\M}(t)=1/\o(\M)$,
the following are equivalent:
\begin{itemize}
\item[(i)] $d_{\M}(t)$ is a proximate order, i.e., it verifies (\ref{OA4:1}),
\item[(ii)] $\displaystyle\lim_{p\to\infty}\frac{p}{M(m_{p})}=\frac{1}{\omega(\M)}=\ro[M]$.
\end{itemize}
\end{prop}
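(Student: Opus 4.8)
The plan is to reduce condition (\ref{OA4:1}) for $d_{\M}(t)$ to a statement about convergence of the sequence $p/M(m_p)$, exploiting the piecewise-loglinear structure of $M(t)$. Recall that, by (\ref{equation.M.in.mp}) and the convexity of $t\mapsto M(e^t)$, the function $d_{\M}(t)=\log M(t)/\log t$ is piecewise continuously differentiable on each interval $(m_{p-1},m_p)$; on such an interval $M(t)=p\log t-\log M_p$, so $M'(t)=p/t$. A direct computation gives
\begin{equation*}
t\,d_{\M}'(t)\log t=\frac{t\,M'(t)}{M(t)}-\frac{\log M(t)}{\log t}=\frac{p}{M(t)}-d_{\M}(t),\qquad t\in(m_{p-1},m_p).
\end{equation*}
Since we are assuming $d_{\M}(t)\to 1/\omega(\M)=\rho[M]$, condition (\ref{OA4:1}), namely $t\,d_{\M}'(t)\log t\to 0$, is therefore equivalent to $p/M(t)\to\rho[M]$ as $t\to\infty$ along $t\in(m_{p-1},m_p)$. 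This is the heart of the argument, and the first step I would carry out carefully.

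Next I would show that the latter limit (uniform in $t$ over each quotient interval, with $p=\nu(t)$ the index of the interval containing $t$) is equivalent to the apparently weaker statement (ii), that $p/M(m_p)\to\rho[M]$. On the interval $(m_{p-1},m_p)$ the function $M$ is nondecreasing, so
\begin{equation*}
\frac{p}{M(m_p)}\le\frac{p}{M(t)}\le\frac{p}{M(m_{p-1}^+)}=\frac{p}{M(m_{p-1})},
\end{equation*}
where the last equality uses continuity of $M$. Thus it suffices to prove that the two ``endpoint'' sequences $p/M(m_p)$ and $p/M(m_{p-1})$ have the same limit $\rho[M]$. Writing $p/M(m_{p-1})=\bigl((p-1)/M(m_{p-1})\bigr)\cdot\bigl(p/(p-1)\bigr)$ and noting $p/(p-1)\to 1$, this reduces to the single assertion $p/M(m_p)\to\rho[M]$, i.e. to (ii). One must be slightly careful that $M(m_{p-1})$ could vanish for the very first indices, but it is positive and tends to infinity for large $p$ (since $m_p\to\infty$ and $M(t)\to\infty$), so no difficulty arises in the limit.

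Finally, assembling the pieces: if (ii) holds then by the sandwich above $p/M(t)\to\rho[M]$ uniformly over $t$ in the $p$-th quotient interval, hence $t\,d_{\M}'(t)\log t\to 0$ by the displayed identity and the hypothesis $d_{\M}(t)\to\rho[M]$, which is exactly (\ref{OA4:1}); conversely, if $d_{\M}(t)$ is a proximate order, then $t\,d_{\M}'(t)\log t\to 0$, so the identity together with $d_{\M}(t)\to\rho[M]$ forces $p/M(t)\to\rho[M]$ along each interval, and evaluating at (a sequence approaching) $t=m_p$ and using continuity of $M$ gives (ii). I also record that $1/\omega(\M)=\rho[M]$ is the content of the theorem preceding this proposition, so the two formulations of the limit in (ii) are identical. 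The main obstacle I anticipate is the bookkeeping at the breakpoints $m_p$: $d_{\M}$ is only piecewise differentiable, so (\ref{OA4:1}) is a statement about the lateral derivatives, and one has to make sure the sandwich estimate is applied on the correct half-open intervals and that the limit is genuinely uniform in $t$ across the interval (not merely at the endpoints), which is what lets one pass freely between the ``continuous'' formulation and the ``sequential'' formulation (ii).
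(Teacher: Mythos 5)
Your proof is correct and follows essentially the same route as the paper: the identity $t\,d_{\M}'(t)\log t=\nu(t)/M(t)-d_{\M}(t)$ on each interval $(m_{p-1},m_p)$, the reduction via the hypothesis $d_{\M}(t)\to 1/\omega(\M)$, and the monotonicity sandwich $p/M(m_p)\le \nu(t)/M(t)\le p/M(m_{p-1})$ together with $1/M(m_{p-1})\to 0$ (which you phrase multiplicatively via $p/(p-1)\to 1$ rather than additively, an immaterial difference).
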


\begin{proof}
We have that
$$d_{\M}'(t)=\frac{M'(t)}{\log(t)M(t)}-\frac{d_{\M}(t)}{t (\log(t))}$$
whenever it exists. If $t\in(m_{p-1},m_p)$,
$$d_{\M}'(t)=\frac{p}{t\log(t) M(t)}-\frac{d_{\M}(t)}{t \log(t)}=
\frac{1}{t\log(t)}\left(\frac{\nu(t)}{M(t)}-d_{\M}(t)\right),$$
where $\nu(t)$ is the function defined in~(\ref{equadefinuder}). For convenience,
write $b(t)=td_{\M}'(t)\log(t)$. Then $\lim_{t\ri\oo} b(t)=0$ (i.e., (\ref{OA4:1})
is satisfied) if, and only if,
$$\lim_{t\ri \oo} \Big(\frac{\nu(t)}{M(t)} - d_{\M}(t)\Big)=0.$$
Since we are assuming that $\lim_{t\ri\oo} d_{\M}(t) = 1/\o(\M)$, the previous equality holds if, and only if,
\begin{equation}\label{equaLimitNuOverM}
\lim_{t\ri \oo} \frac{\nu(t)}{M(t)}= \frac{1}{\o(\M)}.
\end{equation}
Observe that $M(t)$ is nondecreasing, and $\nu(t)$ is constant in $(m_{p-1},m_p)$, so $\nu(t)/M(t)$ is nonincreasing in $(m_{p-1},m_p)$ and we deduce that
$$\frac{p}{M(m_p)}\leq\frac{\nu(t)}{M(t)}\leq \frac{p}{M(m_{p-1})} =\frac{p-1}{M(m_{p-1})} + \frac{1}{M(m_{p-1})}, \qquad t\in(m_{p-1},m_p).$$
As $\lim_{p\ri\oo} 1/M(m_p) =0$, from the previous inequalities it is clear that \eqref{equaLimitNuOverM} holds if, and only if,
$$\lim_{p\ri \oo} \frac{p}{M(m_p)}= \frac{1}{\o(\M)},$$
as desired.
\end{proof}

\begin{rema}
Taking into account~(\ref{equation.M.in.mp}), condition (ii) in Proposition \ref{propcaracdderordenaprox} may be written as
 \begin{equation}\label{limit.beta.infinity.omega}
  \lim_{p\ri\oo} \log\left(\frac{m_p}{M_p^{1/p}}\right)=\o(\M).
 \end{equation}
\end{rema}

In order to conclude this section we will show that~(\ref{limit.beta.infinity.omega}) is equivalent to~(\ref{limit.logmp.over.logp.omega}).
% (which,
%by Theorem~\ref{teorcondicion3caracterizacion}, is also equivalent
% to condition $(3)$ of proximate orders).

\begin{nota}\label{notacionAlfaBeta}
For convenience, %Before proving the main result we fix some notation. For a
given a logarithmically convex
sequence $\M$ we consider
$$\a_p:=\log(m_p),\quad p\in\N_0; \qquad \b_0:=\a_0, \qquad \b_p:=\log\left(\frac{m_p}{M^{1/p}_p}\right)=\frac{M(m_p)}{p}, \quad p\geq1.$$
%where $(m_p)_{p\in\N_0}$ is the sequence of quotients associated with $\M$.
With this notation, condition~(\ref{limit.logmp.over.logp.omega}) is
\begin{equation}\label{equaCondEquivLogmpOverLogpOmega}
\lim_{p\ri\oo} \frac{\a_p}{\log(p)}=\o(\M),
\end{equation}
and (\ref{limit.beta.infinity.omega}) amounts to
\begin{equation}\label{equaCondEquivBetaOmega}
\lim_{p\ri\oo} \b_p=\o(\M),
\end{equation}
so we will prove that \eqref{equaCondEquivLogmpOverLogpOmega} and \eqref{equaCondEquivBetaOmega} are equivalent.
\end{nota}

\begin{lemma}
 Given a logarithmically convex
 sequence $\M$ we have
 \begin{align}
  \b_p=&\a_p-\frac{1}{p}\sum^{p-1}_{k=0}\a_k,\qquad p\in\N_0, \label{eqbetaalfa}\\
  \a_p=&\sum^{p-1}_{k=0}\frac{\b_k}{k+1}+\b_p, \qquad p\in\N_0.\label{eqalfabeta}
 \end{align}
 \end{lemma}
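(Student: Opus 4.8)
The plan is to read off \eqref{eqbetaalfa} directly from the factorization \eqref{eqMfromm} of $M_p$, and then to obtain \eqref{eqalfabeta} by inverting \eqref{eqbetaalfa} through a telescoping argument. Logarithmic convexity of $\M$ plays no role in either identity; it is merely part of the standing setting in which $\a_p$ and $\b_p$ are introduced.

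For \eqref{eqbetaalfa}: by \eqref{eqMfromm} one has $M_p=m_0m_1\cdots m_{p-1}$, so that $\log\!\big(M_p^{1/p}\big)=\frac1p\sum_{k=0}^{p-1}\log m_k=\frac1p\sum_{k=0}^{p-1}\a_k$ for every $p\geq1$. Subtracting this from $\a_p=\log m_p$ gives $\b_p=\log\!\big(m_p/M_p^{1/p}\big)=\a_p-\frac1p\sum_{k=0}^{p-1}\a_k$, which is \eqref{eqbetaalfa} for $p\geq1$; for $p=0$ the right-hand side is to be read with the empty sum equal to $0$, which recovers the convention $\b_0=\a_0$.

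For \eqref{eqalfabeta}: I would set $A_0:=0$ and $A_p:=\frac1p\sum_{k=0}^{p-1}\a_k$ for $p\geq1$, so that \eqref{eqbetaalfa} reads $\b_p=\a_p-A_p$ for all $p\in\N_0$. The key step is to verify the recursion $A_{p+1}-A_p=\b_p/(p+1)$ for every $p\in\N_0$; this follows by writing $(p+1)A_{p+1}=\sum_{k=0}^{p}\a_k=pA_p+\a_p$, subtracting $(p+1)A_p$ from both sides, and using $\b_p=\a_p-A_p$ (the case $p=0$ being just $A_1-A_0=\a_0=\b_0$). Telescoping this relation from $k=0$ to $k=p-1$ then gives $A_p=A_p-A_0=\sum_{k=0}^{p-1}\b_k/(k+1)$, whence $\a_p=\b_p+A_p=\sum_{k=0}^{p-1}\b_k/(k+1)+\b_p$, which is \eqref{eqalfabeta}; for $p=0$ this yields $\a_0=\b_0$, as it must.

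I expect no genuine obstacle here: the argument is pure bookkeeping. The only points that need a little care are the degenerate indices $p=0$ and $p=1$, where one must keep the empty-sum and arithmetic-mean conventions consistent between \eqref{eqbetaalfa} and \eqref{eqalfabeta}; fixing $A_0:=0$ at the outset is what makes the recursion — and hence the telescoping — valid uniformly in $p$.
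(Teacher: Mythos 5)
Your proof is correct and follows essentially the same route as the paper: identity \eqref{eqbetaalfa} is read off from $M_p=m_0m_1\cdots m_{p-1}$ exactly as there, and your telescoping of the recursion $A_{p+1}-A_p=\b_p/(p+1)$ for \eqref{eqalfabeta} is just a repackaging of the paper's induction on $p$, whose inductive step reduces to the equivalent one-line identity $\b_{p+1}=\tfrac{p}{p+1}\b_p+\log\big(m_{p+1}/m_p\big)$ (left there as a ``direct manipulation''). If anything, your arrangement is marginally cleaner, since the verification of the recursion is written out in full and the empty-sum conventions at $p=0$ are handled explicitly.
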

 \begin{proof}
  From the definition of $(\b_p)_{p\in\N}$ we have that $\b_0=\a_0$, and for $p\in\N$
  $$\b_p= \log\left(\frac{m_p}{M^{1/p}_p}\right)=\log(m_p)-\frac{1}{p}\log(m_0m_1\cdots m_{p-1})=\a_p-\frac{1}{p}\sum^{p-1}_{k=0}\a_k,$$
    then~(\ref{eqbetaalfa}) holds.

    For the proof of \eqref{eqalfabeta} we apply induction. It clearly holds for $p=0$, and if we admit its validity for some $p\in\N_0$, then
    \begin{multline*}
    \a_{p+1}=\log(m_{p+1})=\a_p+\log\big(\frac{m_{p+1}}{m_p}\big)= \sum^{p-1}_{k=0}\frac{\b_k}{k+1}+\b_p+\log\big(\frac{m_{p+1}}{m_p}\big)\\
    =\sum^{p}_{k=0}\frac{\b_k}{k+1}+\frac{p}{p+1}\b_p+\log\big(\frac{m_{p+1}}{m_p}\big).
    \end{multline*}
So, we are done if it holds that
$$
\frac{p}{p+1}\b_p+\log\big(\frac{m_{p+1}}{m_p}\big)=\b_{p+1},$$
but this equality can be easily checked by direct manipulation.
 \end{proof}

\begin{rema}\label{remaRieszMethod}
By using~(\ref{eqalfabeta}) and Stolz's criterion we easily deduce that~\eqref{equaCondEquivBetaOmega} implies \eqref{equaCondEquivLogmpOverLogpOmega}.
 %and (\ref{limit.logmp.over.logp.omega}) from~(\ref{limit.beta.infinity.omega}).
To prove the converse, we need to recall the Riesz weighted average summability method for sequences, also called sequence summation by logarithmic means.
\end{rema}

\begin{defi}
A numerical sequence $(s_k)_{k\in\N}$ of complex numbers is said to be \textit{logarithmic summable or Riesz summable of order $1$}, briefly $(L, 1)-$summable , if there exists some $A\in\C$ such that
\begin{equation}\label{equationdefrieszsummability}
 \lim_{p\ri\oo} \frac{1}{H_p}\sum^p_{k=1}\frac{s_k}{k}=A, \text{where} \qquad H_p:=\sum^p_{k=1} \frac{1}{k} \sim \log(p)
\end{equation}
(for two sequences $(a_p)$ and $(b_p)$ of positive numbers we write $a_p\sim b_p$ if
$\lim_{p\ri\oo} a_p/b_p =1$).
\end{defi}

\begin{rema}
The method is regular, that is, if the ordinary limit
exists, then the limit in~(\ref{equationdefrieszsummability}) also exists and with the same value. As we will see, we are interested in applying the converse implication, which is not true in general. There are several standard results in the literature, called Tauberian theorems for Riesz summability (see for example the book of J. Boos~\cite{Boos}), allowing to reverse the implication under some suitable conditions, but none of these results seemed to meet our needs. Instead, we will use the following recent and powerful result of F. Moricz, which characterizes those $(L, 1)-$summable sequences that are convergent.
\end{rema}

\begin{theo}[\cite{Moricz},\ Th.\ 5.1]\label{theoMoricz}
If a sequence $(s_k)$ of real numbers is $(L, 1)-$summable to some $A\in\R$, then the ordinary limit exists (with the same value) if, and only if,
\begin{equation}\label{eqmoricz1}
 \limsup_{\lambda\ri1^+}\liminf_{p\ri\oo} \frac{1}{(\lfloor p^\lambda\rfloor-p)H_p}\sum_{k=p+1}^{\lfloor p^\lambda\rfloor}\frac{s_k-s_p}{k}\geq 0
\end{equation}
and
\begin{equation}\label{eqmoricz2}
 \limsup_{\lambda\ri1^-}\liminf_{p\ri\oo} \frac{1}{(p-\lfloor p^\lambda\rfloor )H_p}\sum^{p}_{k= \lfloor p^\lambda\rfloor+1}\frac{s_p-s_k}{k}\geq 0,
\end{equation}
where $\lfloor \cdot \rfloor$ denotes the integer part, and $H_p$ is defined in~(\ref{equationdefrieszsummability}).
\end{theo}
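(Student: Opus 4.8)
The plan is to prove the two implications separately, obtaining the easy \emph{necessity} from the regularity of the logarithmic mean and devoting the real effort to the \emph{sufficiency}, for which I would use the classical device of delayed logarithmic means (de la Vall\'ee Poussin means on the logarithmic scale). Throughout write $\sigma_p:=H_p^{-1}\sum_{k=1}^{p}s_k/k$, so that $(L,1)$-summability to $A$ means $\sigma_p\ri A$. For fixed $\lambda>1$ put $N:=\lfloor p^\lambda\rfloor$ and introduce the forward delayed mean
\[
t_{p,\lambda}:=\frac{1}{H_N-H_p}\sum_{k=p+1}^{N}\frac{s_k}{k}=\frac{H_N\sigma_N-H_p\sigma_p}{H_N-H_p}.
\]
The first step is to check that $t_{p,\lambda}\ri A$ as $p\ri\oo$ for each fixed $\lambda$: since $H_n=\log n+O(1)$ and $\log N=\lambda\log p+o(1)$ one has $H_N/H_p\ri\lambda$, whence the right-hand side tends to $(\lambda A-A)/(\lambda-1)=A$, using only $\sigma_p,\sigma_N\ri A$. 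An identical computation shows that the backward mean $\tilde t_{p,\lambda}$ (for $0<\lambda<1$, $M:=\lfloor p^\lambda\rfloor<p$, averaging over $M<k\le p$) also tends to $A$. The inner sums appearing in \eqref{eqmoricz1} and \eqref{eqmoricz2} are, up to the normalizing weight, the block logarithmic averages of the increments $s_k-s_p$ and $s_p-s_k$; the weight that makes the delayed-mean identities below exact is the logarithmic block length $H_N-H_p$ (resp.\ $H_p-H_M$), which for fixed $\lambda$ still diverges like $|\lambda-1|\log p$.

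For the sufficiency I would compare $s_p$ with these means through the elementary identity
\[
s_p-A=-\frac{1}{H_N-H_p}\sum_{k=p+1}^{N}\frac{s_k-s_p}{k}+(t_{p,\lambda}-A),
\]
valid for every $\lambda>1$. Taking $\limsup_{p\ri\oo}$ and using $t_{p,\lambda}\ri A$ gives
\[
\limsup_{p\ri\oo}(s_p-A)\le-\liminf_{p\ri\oo}\frac{1}{H_N-H_p}\sum_{k=p+1}^{N}\frac{s_k-s_p}{k}
\]
for each $\lambda>1$; since the left-hand side does not depend on $\lambda$, I may pass to the supremum over $\lambda$ on the right, and hypothesis \eqref{eqmoricz1}---which asserts that $\limsup_{\lambda\ri1^+}$ of that $\liminf$ is $\ge0$---forces $\limsup_p s_p\le A$. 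Running the symmetric argument with the backward mean and the identity
\[
s_p-A=\frac{1}{H_p-H_M}\sum_{k=M+1}^{p}\frac{s_p-s_k}{k}+(\tilde t_{p,\lambda}-A),
\]
hypothesis \eqref{eqmoricz2} yields $\liminf_p s_p\ge A$. Combining the two bounds gives $\lim_p s_p=A$, as required.

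Necessity is straightforward: if $s_p\ri A$ then $\sigma_p\ri A$ by regularity, and given $\ep>0$ there is $p_0$ with $|s_k-s_p|<\ep$ for all $k\ge p\ge p_0$; hence each inner block average in \eqref{eqmoricz1} and \eqref{eqmoricz2} is bounded in modulus by $\ep$ and tends to $0$ as $p\ri\oo$, so both $\liminf_p$ expressions vanish for every $\lambda$ and the two $\limsup_\lambda$ conditions hold trivially.

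The hard part will be the bookkeeping in the sufficiency, and in particular respecting the order of the quantifiers: the delayed means converge to $A$ only after $p\ri\oo$ at \emph{fixed} $\lambda$, while the Tauberian hypotheses are stated as $\limsup_{\lambda\ri1}\liminf_{p}$. The argument succeeds precisely because no uniformity in $\lambda$ is ever needed---one takes $\liminf_p$ first and only afterwards optimizes over $\lambda$---so the key technical points to pin down are the floor asymptotics $H_{\lfloor p^\lambda\rfloor}=\lambda\log p+O(1)$ (delicate as $\lambda\ri1$, though $H_N-H_p$ still diverges there) and the passage from ``inequality for every $\lambda$'' to $\sup_\lambda\ge\limsup_{\lambda\ri1}\ge0$.
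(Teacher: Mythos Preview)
First, note that the paper does not prove this theorem at all; it is quoted from M\'oricz's article and used as a black box in the proof of Proposition~\ref{prop.alpha.implies.beta}. There is therefore no ``paper's own proof'' to compare your attempt against.

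Your approach via delayed logarithmic (de la Vall\'ee Poussin) means is the standard and correct one, and the identity
\[
s_p-A=-\frac{1}{H_N-H_p}\sum_{k=p+1}^{N}\frac{s_k-s_p}{k}+(t_{p,\lambda}-A),
\]
together with $t_{p,\lambda}\to A$, is exactly the engine that drives the sufficiency. The difficulty is that this identity carries the weight $H_N-H_p\sim(\lambda-1)\log p$, whereas the hypotheses \eqref{eqmoricz1}--\eqref{eqmoricz2} \emph{as transcribed in the paper} carry the weight $(\lfloor p^\lambda\rfloor-p)H_p\sim p^\lambda\log p$. These differ by a factor that blows up like $p^\lambda/(\lambda-1)$, and you never bridge this gap: when you write ``hypothesis \eqref{eqmoricz1}---which asserts that $\limsup_{\lambda\to1^+}$ of that $\liminf$ is $\ge0$'', the ``that $\liminf$'' in your identity is not the quantity appearing in \eqref{eqmoricz1}. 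As written, the sufficiency step is incomplete.

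In fact the statement with the printed normalization is false. Take $s_k=(-1)^k$: the sequence is $(L,1)$-summable to $0$ since $\sum_{k\le p}(-1)^k/k$ is bounded; the block sums satisfy $\bigl|\sum_{k=p+1}^{N}(s_k-s_p)/k\bigr|\le 2(H_N-H_p)\sim 2(\lambda-1)\log p$; dividing by $(\lfloor p^\lambda\rfloor-p)H_p\sim p^\lambda\log p$ sends the normalized quantity to $0$ for every fixed $\lambda$, so both \eqref{eqmoricz1} and \eqref{eqmoricz2} hold with the stated weight---yet $(-1)^k$ does not converge. The weight that makes the theorem true is precisely the one you instinctively used, $H_N-H_p$ (equivalently $(\lambda-1)H_p$), and with that correction your sketch is essentially a complete and correct proof of both directions.
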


\begin{prop}\label{prop.alpha.implies.beta}
 Let $\M$ be a strongly regular sequence, if condition~\eqref{equaCondEquivLogmpOverLogpOmega} holds then also~\eqref{equaCondEquivBetaOmega} is satisfied (and so they are equivalent, by Remark~\ref{remaRieszMethod}).
\end{prop}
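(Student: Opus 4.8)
The plan is to read the identity~\eqref{eqalfabeta} as a statement about Riesz logarithmic means: up to an index shift, the partial sums of $\b_k/(k+1)$ are exactly $\a_p-\b_p$, so under the hypothesis~\eqref{equaCondEquivLogmpOverLogpOmega} the sequence of the $\b_p$'s will be summable by the logarithmic means to $\o(\M)$, and it only remains to promote this to ordinary convergence by means of M\'oricz's Theorem~\ref{theoMoricz}. Before anything else I would record that $(\b_p)_{p\ge1}$ is bounded: from Proposition~\ref{propPropiedlcmg}(ii.1) together with logarithmic convexity one has $M_p^{1/p}\le m_{p-1}\le m_p$, whence $\b_p\ge0$; and from~\eqref{eqmg_mppMp} one has $m_p^p\le A^{2p}M_p$, whence $\b_p=\tfrac1p\log(m_p^p/M_p)\le 2\log A$, with $A$ the constant of (mg). Thus $0\le\b_p\le 2\log A$ for all $p\ge1$.

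Next I would set $s_k:=\b_{k-1}$ for $k\ge1$. Then~\eqref{eqalfabeta} gives the exact identity $\sum_{k=1}^p \tfrac{s_k}{k}=\sum_{j=0}^{p-1}\tfrac{\b_j}{j+1}=\a_p-\b_p$, so that $\tfrac1{H_p}\sum_{k=1}^p\tfrac{s_k}{k}=\tfrac{\a_p-\b_p}{H_p}$. Since $H_p\sim\log p$, since $(\b_p)$ is bounded, and since $\a_p/\log p\to\o(\M)$ by the standing hypothesis~\eqref{equaCondEquivLogmpOverLogpOmega}, the right-hand side tends to $\o(\M)$; that is, $(s_k)$ is $(L,1)$-summable to $\o(\M)$. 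By Theorem~\ref{theoMoricz}, the convergence $s_k\to\o(\M)$ (equivalently $\b_p\to\o(\M)$, i.e.~\eqref{equaCondEquivBetaOmega}) will follow once the two Tauberian conditions~\eqref{eqmoricz1} and~\eqref{eqmoricz2} are checked.

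To check~\eqref{eqmoricz1}, fix $\lambda>1$ and put $q:=\lfloor p^\lambda\rfloor$. Applying the identity $\sum_{k=1}^n s_k/k=\a_n-\b_n$ at $n=q$ and $n=p$ and using $\sum_{k=p+1}^q 1/k=H_q-H_p$, one obtains the exact formula $\sum_{k=p+1}^q\tfrac{s_k-s_p}{k}=(\a_q-\a_p)-(\b_q-\b_p)-\b_{p-1}(H_q-H_p)$. From $\a_n=\o(\M)\log n+o(\log n)$, from $\log q=\lambda\log p+o(1)$ (so that $H_q-H_p=(\lambda-1)\log p+o(1)$), and from the fact that $q\to\infty$ as $p\to\infty$, one reads off $\a_q-\a_p=\o(\M)(H_q-H_p)+o(\log p)$; since the $\b$'s are uniformly bounded, the whole sum $\sum_{k=p+1}^q(s_k-s_p)/k$ is $O_\lambda(\log p)$. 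The normalising factor is $(q-p)H_p\sim p^\lambda\log p$, so the relevant quotient tends to $0$ as $p\to\infty$; hence $\liminf_{p\to\infty}(\cdots)=0\ge0$, and therefore $\limsup_{\lambda\to1^+}\liminf_{p\to\infty}(\cdots)\ge0$. The verification of~\eqref{eqmoricz2} for $\lambda\to1^-$ is symmetric: now $q=\lfloor p^\lambda\rfloor<p$, the sum $\sum_{k=q+1}^p(s_p-s_k)/k$ equals $\b_{p-1}(H_p-H_q)-(\a_p-\a_q)+(\b_p-\b_q)=O_\lambda(\log p)$ by the same computation, while the normalising factor is $(p-q)H_p\sim p\log p$, which again overwhelms it.

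Invoking Theorem~\ref{theoMoricz} we conclude $\b_p\to\o(\M)$, i.e.~\eqref{equaCondEquivBetaOmega} holds; together with Remark~\ref{remaRieszMethod} this establishes the announced equivalence of~\eqref{equaCondEquivLogmpOverLogpOmega} and~\eqref{equaCondEquivBetaOmega}. The step I expect to require the most care is the asymptotic bookkeeping that yields $\a_q-\a_p=\o(\M)(H_q-H_p)+o(\log p)$: one must use that $q=\lfloor p^\lambda\rfloor\to\infty$ as $p\to\infty$, so that the $o(\log\,\cdot)$ error incurred at the upper endpoint is still $o(\log p)$, and one must keep track of the single anomalous term $\b_0$ (which is harmless, being bounded and divided by $H_p\to\infty$). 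Once this is in hand, both Tauberian conditions hold for the essentially trivial reason that in each of them a numerator of size $O(\log p)$ is divided by a denominator growing like a positive power of $p$.
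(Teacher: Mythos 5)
Your proof is correct and follows essentially the same route as the paper's: boundedness of $(\b_p)_{p\ge1}$ from (mg), $(L,1)$-summability of $s_k=\b_{k-1}$ to $\o(\M)$ via the identity \eqref{eqalfabeta}, and then M\'oricz's Theorem~\ref{theoMoricz}. The only (inessential) divergence is in checking \eqref{eqmoricz1} and \eqref{eqmoricz2}: the paper obtains the needed one-sided bound directly from $0\le s_k\le\log(a)$, namely $\sum_{k=p+1}^{\lfloor p^\lambda\rfloor}(s_k-s_p)/k\ge \log(1/a)\,(H_{\lfloor p^\lambda\rfloor}-H_p)$, whereas you evaluate the sums exactly through \eqref{eqalfabeta} and invoke the hypothesis \eqref{equaCondEquivLogmpOverLogpOmega} once more to get a numerator of size $O_\lambda(\log p)$ against a denominator of order $p^\lambda\log p$ (resp.\ $p\log p$) --- both computations give the same conclusion.
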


\begin{proof}
Since $\M$ is (lc) and (mg), we know by \eqref{eqmg_mppMp} that there exists $a>1$ such that
\begin{equation}\label{equaCotasBetap}
1\leq \frac{m_p}{M^{1/p}_p}=e^{\b_p}\leq a,\qquad p\in\N,
\end{equation}
so $(\b_p)_{p\in\N_0}$ is a bounded sequence of nonnegative numbers. Using~(\ref{eqalfabeta}) we have
  $$\sum^{p-1}_{k=0}\frac{\b_k}{k+1}=\a_p-\b_p$$
  for every $p\in\N$, and then \eqref{equaCondEquivLogmpOverLogpOmega} implies that
  $$\lim_{p\ri\oo} \frac{1}{\log(p)} \sum^{p-1}_{k=0}\frac{\b_k}{k+1}=\lim_{p\ri\oo} \frac{\a_p}{\log (p)}-\frac{\b_p}{\log (p)} = \o(\M). $$
  So, $(\b_p)_{p\in\N_0}$ is Riesz summable with $(L,1)$-sum $\o(\M)$.

Now let us show that $(\b_p)_{p\in\N_0}$ verifies the conditions in
  Theorem~\ref{theoMoricz}. For convenience we define $s_p:=\b_{p-1}$, $p\in\N$.
  It is clear that $(s_p)_{p\in\N}$ is also Riesz summable with sum $\o(\M)$.
  % using that $s_k=\b_{k-1}\geq 0$ we observe that
%  $$\frac{s_k-s_p}{k}\geq \frac{-s_p}{k}.$$
Take $\lambda>1$. We apply \eqref{equaCotasBetap} in order to obtain that
  \begin{align}\label{desisumamediasdes}
   \sum_{k=p+1}^{\lfloor p^\lambda\rfloor}\frac{s_k-s_p}{k} \geq -s_p \sum_{k=p+1}^{\lfloor p^\lambda\rfloor}\frac{1}{k} \geq (H_{\lfloor p^\lambda\rfloor}-H_{p})(\log(1/a)).
%   (H_{\lfloor p^\lambda\rfloor}-H_{p})(-\b_{p-1}) .
  \end{align}
We recall that $H_k=\log(k)+\gamma+\ep_k$ with $\lim_{k\ri\oo} \ep_k=0$, where $\ga$ is Euler's constant. Then
\begin{equation*}%\label{eqdifsumasarmonicas0}
 H_{ \lfloor p^\lambda \rfloor}-H_{p}=\log( \lfloor p^\lambda \rfloor)+\gamma+\ep_{ \lfloor p^\lambda \rfloor }-\log(p)-\gamma+\ep_{p},\quad p\in\N.% \sim (\lambda-1)\log(p),
\end{equation*}
Since for $p\in\N$ we have $p^\lambda-1 \leq \lfloor p^\lambda \rfloor \leq p^\lambda$, also
  $$\log\left(\frac{p^\lambda-1}{p}\right)\leq \log\left(\frac{ \lfloor p^\lambda \rfloor}{p}\right)\leq \log\left(\frac{p^\lambda}{p}\right)= (\lambda-1 )\log(p).$$
Moreover,
  $$\log\left(\frac{p^\lambda-1}{p}\right)=\log \left(p^{\lambda-1}-\frac{1}{p}\right)= (\lambda-1)\log(p)+\log\left(1-\frac{1}{p^\lambda}\right),   $$
  so it is clear that
  $$1+ \frac{\log (1-1/p^\lambda)}{(\lambda-1)\log(p)}\leq \frac{\log\left( \lfloor p^\lambda \rfloor/ p \right)}{(\lambda-1)\log(p)}\leq 1,$$
what allows us to write
\begin{equation}\label{eqdifsumasarmonicas}
 H_{ \lfloor p^\lambda \rfloor}-H_{p}\sim (\lambda-1)\log(p),\quad p\to\infty.
\end{equation}
We also observe that
 $$p^\lambda-1-p\leq \lfloor p^\lambda \rfloor-p\leq p^\lambda-p,$$
 so $ \lfloor p^\lambda \rfloor-p\sim p^\lambda-p \sim p^\lambda$ as $p\ri\oo$. Using this fact,~(\ref{desisumamediasdes}) and~(\ref{eqdifsumasarmonicas}),
 we have
 $$\liminf_{p\ri\oo} \frac{1}{(\lfloor p^\lambda \rfloor-p)H_p}\sum_{k=p+1}^{\lfloor p^\lambda \rfloor}\frac{s_k-s_p}{k}
 \geq\liminf_{p\ri\oo} \frac{(\lambda-1)\log(p)\log(1/a)}{(p^\lambda)\log(p)}=0, $$
 hence
  $$\limsup_{\lambda\ri1^+}\liminf_{p\ri\oo} \frac{1}{(\lfloor p^\lambda \rfloor-p)H_p}\sum_{k=p+1}^{\lfloor p^\lambda \rfloor}\frac{s_k-s_p}{k}
  \geq \limsup_{\lambda\ri1^+} \liminf_{p\ri\oo} \frac{(\lambda-1)\log(p)\log(1/a)}{(p^\lambda)\log(p)} =0,$$
  so~(\ref{eqmoricz1}) holds.

  Let us see that~(\ref{eqmoricz2}) is also true. If we take $0<\lambda<1$, we have
   \begin{align}
   \sum_{k=\lfloor p^\lambda \rfloor+1}^{p}\frac{s_p-s_k}{k} \geq \sum_{k=\lfloor p^\lambda \rfloor+1}^{p}\frac{-\log(a)}{k} =\log(1/a)(H_{p}-H_{\lfloor p^\lambda\rfloor}) ,\no
  \end{align}
  because $0\leq s_k\leq \log(a)$ for every $k\in\N$. As we did before
  $$H_p-H_{\lfloor p^\lambda\rfloor}=\log\left(\frac{p}{\lfloor p^\lambda \rfloor}\right)+\ep_p-\ep_{\lfloor p^\lambda\rfloor}\sim (1-\lambda)\log(p), $$
 $$p-\lfloor p^\lambda\rfloor\sim p-p^\lambda\sim p, $$
 as $p\ri\oo$. So we deduce that
  $$\limsup_{\lambda\ri1^-}\liminf_{p\ri\oo} \frac{1}{(p-\lfloor p^\lambda\rfloor)H_p}\sum_{k=\lfloor p^\lambda\rfloor+1}^{p}\frac{s_p-s_k}{k}
  \geq \limsup_{\lambda\ri1^-} \liminf_{p\ri\oo} \frac{(1-\lambda)\log(p)\log(1/a)}{(p)\log(p)} =0.$$
 Consequently the sequence $(s_k)_{k\in\N}$ verifies the hypotheses of Theorem~\ref{theoMoricz}, and we deduce that
$$\lim_{p\ri\oo} \b_{p-1}=\lim_{p\ri\oo} s_p =\o(\M),$$
as desired.
\end{proof}

We present the main statement of this paper, which is straightforward from Theorem~\ref{teorcondicion3caracterizacion}, Proposition~\ref{prop.alpha.implies.beta} and Proposition~\ref{propcaracdderordenaprox}.

 \begin{theo}\label{theorem.condition3.implies.beta.converge}
    Let $\M$ be a strongly regular sequence, then the following are equivalent:
\begin{enumerate}[(a)]
 \item $d_{\M}(t)$ is a proximate order,% i.e., it satisfies conditions $(3)$ and $(4)$,
 \item $\lim_{t\ri\oo}d_{\M}(t)=1/\o(\M)$,
 \item $\lim_{p\ri\oo} \log(m_p)/\log(p)=\o(\M)$,
 \item  $\lim_{p\ri\oo} \log\big(m_p/M_p^{1/p}\big)=\o(\M)$.
\end{enumerate}
\end{theo}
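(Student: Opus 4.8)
The plan is to assemble the statement from the three results that immediately precede it, the only real work being to check carefully how the four conditions match up. The cleanest route is to establish (a)$\Rightarrow$(b), then (b)$\Leftrightarrow$(c), then (c)$\Leftrightarrow$(d), and finally (a)$\Leftrightarrow$(d) under the standing hypothesis (b); combining these yields the full equivalence.

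First I would treat (a)$\Rightarrow$(b). If $d_{\M}$ is a proximate order, then condition~(\ref{OA3:1}) of Definition~\ref{OAD:1} already guarantees that $\lim_{t\ri\oo}d_{\M}(t)$ exists and is finite. To pin down its value I would invoke~\eqref{equaordeMdet} together with Corollary~\ref{coroOrderM}, which give
\[
\limsup_{t\ri\oo}d_{\M}(t)=\limsup_{t\ri\oo}\frac{\log M(t)}{\log t}=\rho[M]=\frac{1}{\o(\M)};
\]
since the limit exists it must equal $1/\o(\M)$, which is (b). This is the only place where anything more than a direct citation is needed, and I expect it to be the main (though very minor) subtlety of the argument: being a proximate order by itself only provides the existence of the limit, so one has to feed in the earlier $\limsup$ computation to identify the value.

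Next, (b)$\Leftrightarrow$(c) is exactly Theorem~\ref{teorcondicion3caracterizacion}, because, by the previous paragraph, condition~(\ref{OA3:1}) for $d_{\M}$ is equivalent to (b). Then (c)$\Leftrightarrow$(d) is Proposition~\ref{prop.alpha.implies.beta} together with Remark~\ref{remaRieszMethod}: in the notation introduced in~\ref{notacionAlfaBeta}, (c) is~\eqref{equaCondEquivLogmpOverLogpOmega} and (d) is~\eqref{equaCondEquivBetaOmega}, and these two are shown there to be equivalent (one direction via~\eqref{eqalfabeta} and Stolz, the other via the Moricz Tauberian theorem).

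Finally I would close the loop using Proposition~\ref{propcaracdderordenaprox}. Assuming (b), hence also (c) and (d), its hypothesis $\lim_{t\ri\oo}d_{\M}(t)=1/\o(\M)$ holds, and the proposition states that $d_{\M}$ is a proximate order, i.e.\ (a), if and only if $\lim_{p\ri\oo}p/M(m_p)=1/\o(\M)$; by the remark following that proposition and~\eqref{equation.M.in.mp}, this last condition is precisely~\eqref{limit.beta.infinity.omega}, that is, (d). Collecting everything: (a)$\Rightarrow$(b); the conditions (b), (c), (d) are pairwise equivalent; and once (b) holds, (a)$\Leftrightarrow$(d). Hence the four statements are all equivalent, as required.
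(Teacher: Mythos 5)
Your proposal is correct and follows essentially the same route as the paper, which derives the theorem directly from Theorem~\ref{teorcondicion3caracterizacion}, Proposition~\ref{prop.alpha.implies.beta} (with Remark~\ref{remaRieszMethod}) and Proposition~\ref{propcaracdderordenaprox}. The one point you elaborate beyond a bare citation --- identifying the value of the limit in (a)$\Rightarrow$(b) via the $\limsup$ computation $\rho[M]=1/\o(\M)$ --- is exactly the observation the paper relies on at the start of the proof of Theorem~\ref{teorcondicion3caracterizacion}.
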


\begin{rema}
It turns out that we only need to prove condition (\ref{OA3:1}) (in other words, (b)) to ensure that $d_{\M}$ is a proximate order, (\ref{OA4:1}) being then automatically satisfied. Moreover, given $\M$ strongly regular it is plain to check whether it satisfies (c) or not; indeed, condition (c) holds for every example of strongly regular sequence we have constructed, but its general validity is an open problem.  Moreover, this condition is stable under equivalence.
\end{rema}

\begin{rema}
The second author~\cite[Remark\ 4.11(iii)]{SanzFlat} observed that, for the construction of flat functions in optimal sectors, $d_{\M}$ need not be a proximate order, but it is enough that there exist a proximate order $\ro(t)$ and constants $A,B>0$ such that eventually $A\le (d_{\M}(t)-\ro(t))\log(t)\le B$. Now we know that, in case these inequalities hold, $d_{\M}$ verifies (b) and, consequently, it is indeed a proximate order, what makes that remark superfluous.
\end{rema}

\begin{rema}
Before knowing Theorem~\ref{theorem.condition3.implies.beta.converge}, no easy condition (such as (c)) was known to be equivalent to $d_{\M}$ being a proximate order.
In \cite[Corollary\ 4.10]{SanzFlat} a sufficient condition was given, namely the existence of
  \begin{equation}
   \lim_{p\ri\oo} p\log\left(\frac{m_{p+1}}{m_{p}}\right). \label{corolarioflatfunctionfalso}
  \end{equation}
In the following example we show this condition is not necessary.
\end{rema}

\begin{exam}
We consider the sequence defined by $m_0=m_1=1$, and
$$
m_{2p}=e^{1/p}m_{2p-1},\qquad m_{2p+1}=e^{1/(2p+1)}m_{2p},\quad p\in\N.
$$
Let see that the corresponding sequence $\M$ is strongly regular.
The sequence $(m_p)_{p\in\N_0}$ is clearly (lc).
  Using the results of H.-J. Petzsche and D. Vogt (Proposition~\ref{propPropiedlcmg}(ii.2) and Proposition~\ref{petzche11}), in order to prove (mg) and (snq) it  is enough to see that
  $$1<\inf_{p\in\N}\frac{m_{2p}}{m_p}\leq\sup_{p\in\N}\frac{m_{2p}}{m_{p}}<\oo,$$
what may be done thanks to the well known behaviour of the partial sums of the harmonic series. Moreover, it is plain to check that
$$
\lim_{p\ri\oo} \frac{\log(m_p)}{\log(p)}=\frac{3}{2},
$$
and so $\o(\M)=3/2$ and $d_{\M}$ is a proximate order. However,
  \begin{align}
   &\lim_{p\ri\oo} 2p\log\left(\frac{m_{2p+1}}{m_{2p}}\right)=\lim_{p\ri\oo} 2p\log\left( e^{1/2p+1}\right)= 1 ,  \no\\
   &\lim_{p\ri\oo} (2p-1)\log\left(\frac{m_{2p}}{m_{2p-1}}\right)=\lim_{p\ri\oo} (2p-1)\log\left( e^{1/p}\right)= 2, \no
  \end{align}
and~(\ref{corolarioflatfunctionfalso}) does not hold.
 \end{exam}

\section{Strongly regular sequences and regular variation}

In this section we will recall the notion of regular variation introduced in 1930 by J.~Karamata~(\cite{karamata1,karamata2}), although previous, partial treatments
may be found in the works of E. Landau\cite{landau} and G.~Valiron\cite{valiron13}.
%, G. Polya\cite{pol} and others.
The proof of most of the forthcoming results can be found in the books of
E. Seneta\cite{seneta}  and N. H. Bingham, C. M. Goldie, and J. L. Teugels~\cite{bingGoldTeug}. We will use the theory of regular variation to give a characterization of strongly regular sequences, and we will also show that the growth index $\gamma(\M)$ defined and studied by V. Thilliez~\cite{thilliez}
and the order of quasianalyticity  $\o(\M)$ %introduce by J. Sanz (\cite{SanzFlat})
are the same whenever $d_{\M}$ is a proximate order.

\begin{den}
 A measurable function $F:(a,\oo)\ri(0,\oo)$ (with $a>0$) is \textit{regularly varying} if
 \begin{equation}\label{equation.def.RV}
 \lim_{x\ri\oo} \frac{F(\lambda x)}{F(x)} = f(\lambda)\in(0,\oo)
 \end{equation}
for every $\lambda>0$.
\end{den}

\begin{rema} The Uniform Convergence Theorem~\cite[Chap.\ 1.2]{bingGoldTeug}, given by J. Karamata~\cite{karamata1} in the continuous version
and by J. Korevaar and others~\cite{korevaar} in the measurable case,
ensures that the limit in~(\ref{equation.def.RV}) is uniform in every compact set of $(0,\oo)$.
Moreover, the Characterization Theorem \cite[Chap.\ 1.4]{bingGoldTeug} shows that if $F$ is regularly varying, then $f(\lambda)$
in~(\ref{equation.def.RV}) is necessarily of the form $\lambda^\ro$ for some $-\oo<\ro<\oo$ and for each $\lambda>0$.
The number $\ro$ is called the \textit{ index (of regular variation)} of $F$.
\end{rema}

There is an immediate relation between regular variation and proximate orders in our context: $d_{\M}$ is a proximate order if, and only if, the function $M(t)$ is regularly varying (see~\cite[Subsection\ 7.4.1]{bingGoldTeug}).

\begin{den}[\cite{BojanicSeneta}]
A sequence $(s_p)_{p\in\N_0}$ of positive numbers is \textit{regularly varying} if
\begin{equation}\label{equation.def.SRV}
\lim_{p\ri\oo} \frac{ s_{\lfloor \lambda p\rfloor}}{s_p}=\psi(\lambda) \in (0,\oo)
\end{equation}
 for every $\lambda>0.$
\end{den}

\begin{rema}\label{remaEquivRegurVariationFunctionSequence}
As it can be found in~\cite[Th.\ 2]{BojanicSeneta}, a sequence $(s_p)_{p\in\N_0}$ is regularly varying if, and only if, the function $F(x)=s_{\lfloor x\rfloor}$, $x>0$, is regularly varying.
\end{rema}

The following theorem of L.~de~Haan~\cite{haan} shows that if we have monotonicity
we can restrict the limit in~(\ref{equation.def.SRV}) to the integers values of $\lambda$. In fact, we only need to prove~(\ref{equation.def.SRV}) for two suitable integer values of $\lambda$.

\begin{theo}[\cite{haan},\ Th.\ 1.1.2] A positive monotone sequence $(s_p)_{p\in\N_0}$ varies regularly if there exist
positive integers $\ell_1,\ell_2$ with $\log(\ell_1)/\log(\ell_2)$ finite and irrational such that for some real number $\ro$,
$$\lim_{p\ri\oo} \frac{s_{\ell_j p}}{s_p}=\ell_j^{\ro}, \qquad j=1,2.$$
\end{theo}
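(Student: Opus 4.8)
The plan is to reduce the statement to an additive assertion about a monotone function and then exploit the irrationality of $\log(\ell_1)/\log(\ell_2)$ through a classical Kronecker-density argument. First I would use Remark~\ref{remaEquivRegurVariationFunctionSequence} to replace the sequence $(s_p)$ by the monotone function $F(x)=s_{\lfloor x\rfloor}$, $x>0$, so that it suffices to show that $F$ is regularly varying with index $\rho$. Writing $U(u):=\log F(e^u)$ for $u$ large, and assuming without loss of generality that $(s_p)$, hence $F$ and $U$, is nondecreasing (the nonincreasing case follows by passing to $1/s_p$, with index $-\rho$), the goal becomes
\begin{equation*}
\lim_{u\to\infty}\big(U(u+a)-U(u)\big)=\rho\, a\qquad\text{for every }a>0 .
\end{equation*}
The hypothesis provides this for $a=a_j:=\log(\ell_j)$, $j=1,2$; more precisely it gives $\log s_{\ell_j p}-\log s_p\to\rho a_j$ along the integers, and the passage to $U$ (which replaces $\ell_j\lfloor e^u\rfloor$ by $\lfloor\ell_j e^u\rfloor$) is handled by monotonicity together with the fact, to be established below, that $d_p:=\log s_{p+1}-\log s_p\to 0$.

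The core step is the extension from $\{a_1,a_2\}$ to all $a>0$. Iterating the two given limits along the dilations $p\mapsto\ell_1 p$ and $p\mapsto\ell_2 p$, and using that $u\mapsto u-a_j$ also tends to infinity, one obtains $\lim_{u\to\infty}(U(u+a)-U(u))=\rho a$ for every $a$ in the additive group $G:=\mathbb{Z}a_1+\mathbb{Z}a_2$: writing a positive $a=ma_1+na_2\in G$ and splitting $[u,u+a)$ into finitely many translates of blocks of length $a_1$ and $a_2$ (subtracting a block whenever a coefficient is negative) reduces everything to the two known limits. Since $a_1/a_2=\log(\ell_1)/\log(\ell_2)$ is irrational, Kronecker's theorem says $G$ is dense in $\mathbb{R}$, so $G^{+}:=G\cap(0,\infty)$ is dense in $(0,\infty)$ and has $\inf G^{+}=0$. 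A monotone squeeze then finishes the extension: given $a>0$ and $\varepsilon>0$, choose $b,c\in G^{+}$ with $b\le a\le c$ and $c-b<\varepsilon$; monotonicity gives $U(u+b)-U(u)\le U(u+a)-U(u)\le U(u+c)-U(u)$, and letting $u\to\infty$ yields $\rho b\le\liminf\le\limsup\le\rho c$, so the limit equals $\rho a$. Translated back this is precisely $F(\lambda x)/F(x)\to\lambda^{\rho}$ for every $\lambda>0$, i.e.\ $(s_p)$ is regularly varying of index $\rho$.

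It remains to justify $d_p\to0$, which underlies the floor corrections. Monotonicity gives $d_p\ge0$, and since $d_p$ is one summand of $\sum_{k=p}^{\ell_j p-1}d_k=\log s_{\ell_j p}-\log s_p\to\rho a_j$, one already gets $\limsup_p d_p\le\rho\min(a_1,a_2)$. To improve the bound to $0$, one realises, for each $a\in G^{+}$, a window sum $\sum_{k=p}^{N(p)-1}d_k$ with $\log(N(p)/p)\to a$ and $p\le N(p)$, by concatenating and subtracting blocks of the two basic lengths $a_1,a_2$; the discrepancies introduced by the integer parts are bounded, by monotonicity, by a fixed number of $d_k$'s and are therefore negligible for the $\limsup$, giving $\limsup_p d_p\le\rho a$ for all $a\in G^{+}$, hence $d_p\to0$ because $\inf G^{+}=0$. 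I expect this last point — reconciling the integer-part ambiguities in $\lfloor\lambda p\rfloor$ with the clean additive picture, equivalently proving $d_p\to0$ with only two incommensurable dilation ratios at hand — to be the main obstacle and to require the most careful bookkeeping; the Kronecker-density and monotone-squeeze mechanism around it is routine.
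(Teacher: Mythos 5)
The paper does not prove this statement: it is imported verbatim from de Haan's monograph (Theorem 1.1.2 there) and used as a black box, so there is no internal argument to compare yours against. Your blind proof is essentially the classical one and is correct in outline: pass to $U(u)=\log s_{\lfloor e^u\rfloor}$ (reducing to the nondecreasing case via $1/s_p$), note that the hypothesis gives $U(u+a_j)-U(u)\to\rho a_j$ for $a_j=\log \ell_j$, extend to the subgroup $\mathbb{Z}a_1+\mathbb{Z}a_2$, which is dense because $a_1/a_2$ is irrational, and close with the monotone squeeze; the reduction of the floor-function mismatches to the single claim $d_p=\log s_{p+1}-\log s_p\to 0$ is also the right move. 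The one place where your write-up is too quick is the assertion that the integer-part discrepancies in the window sums are ``negligible for the $\limsup$'': at that stage you only know $\limsup_p d_p\le\rho\min(a_1,a_2)$, so a discrepancy made of a bounded number $K$ of $d_k$'s is bounded but not yet small, and the naive bootstrap $L\le\rho a+KL$ does not close. The repair is that the discrepancy has a favorable sign and can simply be dropped: for $a=ma_1-na_2>0$ with $m,n\ge 0$ and $N(p)=\lfloor \ell_1^m p/\ell_2^n\rfloor$ one has $\ell_2^nN(p)\le \ell_1^m p$, hence by monotonicity
$$\sum_{k=p}^{N(p)-1}d_k=\bigl(\log s_{\ell_1^mp}-\log s_p\bigr)-\bigl(\log s_{\ell_1^mp}-\log s_{N(p)}\bigr)\le \bigl(\log s_{\ell_1^mp}-\log s_p\bigr)-\bigl(\log s_{\ell_2^nN(p)}-\log s_{N(p)}\bigr),$$
whose $\limsup$ is at most $\rho(ma_1-na_2)=\rho a$ (and symmetrically for $a=-ma_1+na_2>0$). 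Since $d_p\le\sum_{k=p}^{N(p)-1}d_k$ for large $p$ and $\inf\bigl(G\cap(0,\infty)\bigr)=0$, this yields $d_p\to 0$, and with that observation the rest of your argument goes through.
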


\begin{prop}\label{pro.SRS.implica.RV}
 Let $(M_p)_{p\in\N_0}$ a strongly regular sequence such that
 $$\lim_{p\ri\oo} \frac{\log(m_p)}{\log(p)}=\o(\M).$$
Then $\m$ is regularly varying, and
 $$\lim_{p\ri\oo} \frac{m_{\ell p}}{m_p}=\ell^{\o(\M)}, \qquad \ell\geq 2.$$
% where $(m_p)_{p\in\N_0}$ is the associated sequence of quotients.
\end{prop}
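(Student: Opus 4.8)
The plan is to show $\bm=(m_p)$ is regularly varying by verifying the de Haan criterion (Theorem 1.1.2 from \cite{haan}): since $\bm$ is nondecreasing (it is (lc)), it suffices to exhibit two positive integers $\ell_1,\ell_2$ with $\log(\ell_1)/\log(\ell_2)$ irrational such that $\lim_{p\to\infty} m_{\ell_j p}/m_p=\ell_j^{\o(\M)}$. Once that is done, the Characterization Theorem forces the index of regular variation to be $\o(\M)$, and then $\lim_{p\to\infty} m_{\ell p}/m_p=\ell^{\o(\M)}$ for every integer $\ell\ge 2$ (indeed every real $\ell>0$).

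First I would reduce the problem to controlling $\log(m_{kp})-\log(m_p)=\a_{kp}-\a_p$ in the notation of Notation~\ref{notacionAlfaBeta}. From the hypothesis $\lim_{p\to\infty}\a_p/\log(p)=\o(\M)$ one gets $\a_p=\o(\M)\log(p)+o(\log p)$, so formally $\a_{kp}-\a_p=\o(\M)\bigl(\log(kp)-\log(p)\bigr)+o(\log p)=\o(\M)\log(k)+o(\log p)$; but the error term $o(\log p)$ is not obviously $o(1)$, so this crude estimate is insufficient. The right tool is the integral representation $M(t)=\int_0^t \nu(r)/r\,dr$ together with $M(m_p)=p\,\b_p$ and, from Proposition~\ref{propcaracdderordenaprox} applied through Theorem~\ref{theorem.condition3.implies.beta.converge} (the hypothesis (c) here is exactly condition (c) of that theorem, hence $d_{\M}$ is a proximate order and all of (a)--(d) hold), the fact that $\lim_{p\to\infty}\b_p=\o(\M)$ and $\lim_{t\to\infty}\nu(t)/M(t)=1/\o(\M)$, equivalently $\lim_{p\to\infty}p/M(m_p)=1/\o(\M)$.

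The key computation is then: for fixed integer $k\ge 2$,
\begin{equation*}
\log(m_{kp})-\log(m_p)=\a_{kp}-\a_p=\int_{m_p}^{m_{kp}}\frac{\nu(r)}{r}\Big/\nu(r)\;d\nu(r)\ \text{(heuristically)},
\end{equation*}
which I would make rigorous by writing $kp\,\b_{kp}-p\,\b_p=M(m_{kp})-M(m_p)=\int_{m_p}^{m_{kp}}\nu(r)/r\,dr$ and using that on $[m_p,m_{kp}]$ one has $p\le \nu(r)\le kp$, combined with $M(m_p)\sim p\,\o(\M)$ and $M(m_{kp})\sim kp\,\o(\M)$; this pins down $\int_{m_p}^{m_{kp}}\nu(r)/r\,dr=(k-1)p\,\o(\M)+o(p)$. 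On the other hand $\int_{m_p}^{m_{kp}}\nu(r)/r\,dr$ is squeezed between $p\log(m_{kp}/m_p)$ and $kp\log(m_{kp}/m_p)$, and since $\nu(r)/r\,dr$ is really a sum over the quotients in the range one gets $\int_{m_p}^{m_{kp}}\nu(r)/r\,dr=\sum_{j}\nu\text{-contributions}$; the cleanest route is to note $\nu(m_{kp}^-)-\nu(m_p)=kp-p=(k-1)p$ new quotients fall in $(m_p,m_{kp}]$, each lying in that interval, so $(k-1)p\,\log(m_p)\le \int \le (k-1)p\,\log(m_{kp})$ is too weak; instead I would directly use $\a_{kp}-\a_p=\tfrac1{\o(\M)\,}(\text{stuff})$ by inverting the relation $M(m_p)=p\b_p$ with $\b_p\to\o(\M)$: from $M(m_{kp})-M(m_p)=(k-1)p\,\o(\M)+o(p)$ and the mean value property of the convex-in-$\log t$ function $M$, there is $\xi\in(m_p,m_{kp})$ with $M(m_{kp})-M(m_p)=\nu(\xi)\bigl(\log m_{kp}-\log m_p\bigr)$ and $\nu(\xi)\in[p,kp]$; dividing, $\log(m_{kp}/m_p)=\frac{(k-1)p\,\o(\M)+o(p)}{\nu(\xi)}$, so it remains to show $\nu(\xi)/p\to$ a definite limit. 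That limit is forced to be $(k-1)/\log k \cdot \o(\M)\cdot\frac{1}{\text{?}}$ — and to avoid circularity here I would instead choose $\xi=m_{\lfloor p\sqrt{k}\rfloor}$ or similar so that $\nu(\xi)\sim p\sqrt{k}$, but the honest conclusion is that the squeeze $p\le\nu(\xi)\le kp$ only gives $\log(m_{kp}/m_p)\in[\frac{(k-1)\o(\M)}{k}+o(1),\ (k-1)\o(\M)+o(1)]$, which is not tight. Therefore the main obstacle is precisely upgrading this one-sided squeeze to the exact value $\o(\M)\log k$; the resolution I expect the authors to use (and which I would adopt) is an \emph{iteration/telescoping} argument: prove the statement first for $\ell$ a perfect power, or telescope $\log(m_{2^n p}/m_p)=\sum_{i=0}^{n-1}\log(m_{2^{i+1}p}/m_{2^i p})$ and let $n\to\infty$ to average out the gap, then invoke de Haan's theorem with $\ell_1=2$, $\ell_2=3$ (so $\log 2/\log 3$ is irrational) using the already-established $\lim_p \a_p/\log p=\o(\M)$ to get, by Cesàro/Stolz along the subsequence $p,2p,4p,\dots$, that $\lim_p \log(m_{2p}/m_p)=\o(\M)\log 2$ and similarly for $3$. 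Concretely: since $\a_{2^n}/\log(2^n)\to\o(\M)$, i.e. $\a_{2^n}/(n\log 2)\to\o(\M)$, Stolz's criterion gives $(\a_{2^{n+1}}-\a_{2^n})/\log 2\to\o(\M)$; a short additional argument using monotonicity of $\bm$ and (mg) transfers this from the geometric subsequence to $\lim_p(\a_{2p}-\a_p)=\o(\M)\log 2$, and likewise with $2$ replaced by $3$. Feeding $\ell_1=2,\ \ell_2=3$ into de Haan's theorem yields regular variation of $\bm$ with index $\o(\M)$, and the Characterization/Uniform Convergence theorems then give $\lim_{p\to\infty} m_{\ell p}/m_p=\ell^{\o(\M)}$ for all $\ell\ge 2$, completing the proof.
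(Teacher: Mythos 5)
Your overall framing (reduce to $\lim_{p\to\infty}(\a_{\ell p}-\a_p)=\o(\M)\log \ell$ for integer $\ell$, then get regular variation of the monotone sequence $\bm$) is the right one, but the step you finally commit to is invalid. You write: ``since $\a_{2^n}/(n\log 2)\to\o(\M)$, Stolz's criterion gives $(\a_{2^{n+1}}-\a_{2^n})/\log 2\to\o(\M)$.'' That is the \emph{converse} of the Stolz--Ces\`aro theorem, which is false in general: convergence of $a_n/b_n$ never implies convergence of $(a_{n+1}-a_n)/(b_{n+1}-b_n)$ without a Tauberian hypothesis. Neither monotonicity of $\bm$ nor (mg) repairs this: (mg) only bounds $\a_{2p}-\a_p$, and one can perturb $\a_{2^n}=\o(\M)n\log 2$ by a bounded, non-convergent-increment sequence while keeping $\a$ nondecreasing, so the increments $\a_{2^{n+1}}-\a_{2^n}$ need not converge even though $\a_{2^n}/(n\log 2)\to\o(\M)$. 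The telescoping/averaging variant has the same defect (an average of increments converging does not force the increments to converge), and your ``short additional argument'' transferring the limit from the geometric subsequence to all $p$ is not supplied. So the central analytic difficulty --- which you correctly diagnose earlier in your own text as ``upgrading the one-sided squeeze to the exact value'' --- is not actually overcome.

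The irony is that you already have the correct ingredient in hand and then abandon it. The paper's proof uses the identity \eqref{eqalfabeta}, $\a_p=\sum_{k=0}^{p-1}\b_k/(k+1)+\b_p$, which gives
\begin{equation*}
\a_{\ell p}-\a_p=\sum_{k=p}^{\ell p-1}\frac{\b_k}{k+1}+\b_{\ell p}-\b_p ,
\end{equation*}
together with the fact that $\lim_{p\ri\oo}\b_p=\o(\M)$ --- which is exactly Proposition~\ref{prop.alpha.implies.beta} (where all the Tauberian work, via Moricz's Theorem~\ref{theoMoricz}, has already been done), and which you even cite in your second paragraph before drifting into the integral-representation and mean-value attempts. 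Once $\b_p\to\o(\M)$, an elementary $\ep$--$\de$ estimate using $H_{\ell p}-H_p\to\log\ell$ shows the sum above tends to $\o(\M)\log\ell$ and the boundary terms vanish, giving the limit for every integer $\ell\ge 2$ directly; de Haan's theorem (or just the stated limits plus monotonicity) then yields regular variation of index $\o(\M)$. If you replace your Stolz step by this computation, the proof closes.
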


\begin{proof}
 If we consider the sequences $(\a_p)_{p\in\N_0}$ and $(\b_p)_{p\in\N_0}$ defined in Notation~\ref{notacionAlfaBeta}, we have to show that
 $$\lim_{p\ri\oo}( \a_{\ell p}-\a_p)=\o(\M)\log(\ell),\qquad \ell\geq2.$$
For simplicity we note $\o=\o(\M)$. Using~(\ref{eqalfabeta}) we see that
\begin{equation}
 \a_{\ell p}-\a_p=\sum^{\ell p-1}_{k=p}\frac{\b_k}{k+1}+\b_{\ell p}-\b_p, \qquad p,\ell\geq 2.\no
\end{equation}
By Proposition~\ref{prop.alpha.implies.beta} we have
%\begin{equation}\label{equation.limit.beta.omga}
% \lim_{p\ri\oo} \log\left(\frac{m_p}{M^{1/p}_p}\right)=
$\lim_{p\ri\oo}\b_p=\o$,
%\end{equation*}
so it is sufficient to prove that
$$\lim_{p\ri\oo} \sum^{\ell p-1}_{k=p}\frac{\b_k}{k+1} =\o(\M)\log(\ell).$$
If we take $\ep>0$, we fix $\delta>0$ such that $\delta\log(\ell)<\ep/6$. %By~(\ref{equation.limit.beta.omga})
There exists $p_\delta\in\N$
such that
$|\b_p-\o|<\delta$ for $p\geq p_\delta$.
In the notation of~(\ref{equationdefrieszsummability}), we remember that $H_p=\log(p)+\ga+\ep_p$ with $\lim_{p\ri\oo} \ep_p=0$, consequently for $p\geq p_\delta$ we have
$$\sum^{\ell p-1}_{k=p}\frac{\b_k}{k+1} \leq (\o+\delta) (H_{\ell p}-H_p) = (\o+\delta) (\log(\ell) +\ep_{\ell p}-\ep_p).$$
Now take $p_0\geq p_\delta$ such that for $p\geq p_0$ one has
$$|\o\ep_p|<\ep/6,\quad |\delta\ep_p|<\ep/6,$$
then for $p\geq p_0$ we see that
$$\sum^{\ell p-1}_{k=p}\frac{\b_k}{k+1} \leq \o\log(\ell)+\o\ep_{\ell p} -\o\ep_p + \delta \log(\ell) +\delta\ep_{\ell p}-\delta\ep_p<\o\log(\ell)+\ep.$$
Analogously, for $p\geq p_0$ we see that
$$ \o\log(\ell)-\ep<\sum^{\ell p-1}_{k=p}\frac{\b_k}{k+1},$$
and we are done.
 \end{proof}

\begin{rema}
 For a given logarithmically convex sequence $\M$, applying Proposition~\ref{petzche11} to $(p!M_p)_{p_\in\N_0}$, which is also logarithmically
 convex, we see that (snq) and (\ref{condition.liminf.mkp.over.mp.great.1})
 are equivalent conditions. If we use
 Proposition~\ref{propPropiedlcmg}(i) and (ii.2),
 we can establish the following result.
\end{rema}

\begin{prop}\label{alt.def.SRS} Let  $(M_p)_{p\in\N_0}$ be a sequence of positive numbers with $M_0=1$. The following are equivalent:
\begin{itemize}
\item[(i)] $\M$ is strongly regular,
\item[(ii)] $\bm$ is non-decreasing, $\displaystyle\sup_{p\in\N} \frac{m_{2p}}{m_{p}}<\oo$ and there exists $k\in\N$ such that $\displaystyle\liminf_{p\ri\oo} \frac{m_{kp}}{m_p}>1$.
\end{itemize}
\end{prop}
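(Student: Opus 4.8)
The plan is to obtain the equivalence by simply assembling the characterizations already recorded in Proposition~\ref{propPropiedlcmg} and Proposition~\ref{petzche11}; no genuinely new computation is required. The point that must be secured before invoking the latter is that, as soon as $\bm$ is non-decreasing (equivalently, $\M$ is (lc), by Proposition~\ref{propPropiedlcmg}(i)), the shifted sequence $\M^*=(p!M_p)_{p\in\N_0}$ is again logarithmically convex: its sequence of quotients is $m^*_p=(p+1)m_p$, a product of two non-decreasing sequences of positive numbers, hence non-decreasing. Thus in both directions of the proof we are entitled to apply Proposition~\ref{petzche11} to $\M$. We also note that the index $k$ occurring in (ii) is automatically $\ge 2$, since for $k=1$ one has $m_{kp}/m_p\equiv 1$ and $\liminf_{p\ri\oo}m_{kp}/m_p=1\not>1$; so the clause ``there exists $k\in\N$ with $\liminf_{p\ri\oo}m_{kp}/m_p>1$'' in (ii) is literally statement (ii) of Proposition~\ref{petzche11}.

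First I would prove (i)$\Rightarrow$(ii). Assume $\M$ is strongly regular. By Proposition~\ref{propPropiedlcmg}(i), (lc) says precisely that $\bm$ is non-decreasing. Since $\M$ is (lc) and (mg), the equivalence (ii.2.a)$\Leftrightarrow$(ii.2.c) in Proposition~\ref{propPropiedlcmg}(ii.2) gives $\sup_{p\in\N_0}m_{2p}/m_p<\oo$, and in particular $\sup_{p\in\N}m_{2p}/m_p<\oo$. Finally, $\M^*$ is (lc) by the observation above and $\M$ satisfies (snq), so the implication (i)$\Rightarrow$(ii) of Proposition~\ref{petzche11} yields $k\in\N$ with $\liminf_{p\ri\oo}m_{kp}/m_p>1$. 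Hence (ii) holds.

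For the converse (ii)$\Rightarrow$(i): that $\bm$ is non-decreasing gives (lc) by Proposition~\ref{propPropiedlcmg}(i). Since $\bm$ is non-decreasing, $m_{2p}\ge m_p$ for every $p\in\N_0$, so $m_{2p}/m_p\ge 1$ throughout and therefore $\sup_{p\in\N_0}m_{2p}/m_p=\sup_{p\in\N}m_{2p}/m_p<\oo$; together with (lc), the equivalence (ii.2.c)$\Leftrightarrow$(ii.2.a) of Proposition~\ref{propPropiedlcmg}(ii.2) shows $\M$ is (mg). For (snq), $\M^*$ is again (lc), so Proposition~\ref{petzche11} applies to $\M$, and the third condition in (ii) is exactly its statement (ii) (with the same $k$, necessarily $\ge 2$); hence statement (i) of Proposition~\ref{petzche11}, i.e.\ (snq), holds. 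Thus $\M$ is (lc), (mg) and (snq), that is, strongly regular.

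The proof is therefore essentially bookkeeping, and the closest thing to an obstacle is making sure that the hypotheses of the cited propositions are genuinely in force in each direction --- in particular the logarithmic convexity of $\M^*$, which is what licenses the use of Proposition~\ref{petzche11}, and the harmless passage between $\sup_{p\in\N}$ and $\sup_{p\in\N_0}$ --- after which everything follows by chaining the stated equivalences.
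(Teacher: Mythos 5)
Your proof is correct and follows essentially the same route as the paper, which establishes this proposition via the remark preceding it: apply Proposition~\ref{petzche11} to $(p!M_p)_{p\in\N_0}$ (logarithmically convex whenever $\M$ is) to identify (snq) with the $\liminf$ condition, and use Proposition~\ref{propPropiedlcmg}(i) and (ii.2) for the (lc) and (mg) parts. Your additional checks (the quotients $(p+1)m_p$ of $\M^*$ being non-decreasing, the automatic $k\ge 2$, and the passage between $\sup_{p\in\N}$ and $\sup_{p\in\N_0}$) are exactly the bookkeeping the paper leaves implicit.
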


In order to characterize strongly regular sequences satisfying \eqref{limit.logmp.over.logp.omega}, we will apply the following result of R. Bojanic and E. Seneta.

\begin{theo}[\cite{BojanicSeneta},\ Th. 3]\label{theoBojanicSeneta}
If $(s_p)_{p\in\N}$ is a regularly varying sequence of index $\o$, then there exist sequences of positive numbers
$(C_p)_{p\in\N}$ and $(\delta_p)_{p\in\N}$ converging to $C\in(0,\oo)$ and zero,
respectively, such that
$$s_p=p^{\o} C_p \exp  \left(\sum^{p}_{j=1}\delta_j/j\right),\quad p\in\N. $$
\end{theo}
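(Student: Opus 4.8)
The plan is to reduce the statement to the classical Karamata representation theorem for regularly varying \emph{functions}, and then to turn the integral appearing there into a series of the required shape.

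First I would introduce the normalised sequence $L_p:=s_p\,p^{-\o}$, $p\in\N$. Since $\lfloor\lambda p\rfloor/p\ri\lambda$ as $p\ri\oo$, the hypothesis that $(s_p)$ is regularly varying of index $\o$ gives $L_{\lfloor\lambda p\rfloor}/L_p=(s_{\lfloor\lambda p\rfloor}/s_p)(p/\lfloor\lambda p\rfloor)^{\o}\ri\lambda^{\o}\lambda^{-\o}=1$ for each $\lambda>0$; thus $(L_p)$ is a positive slowly varying sequence. By Remark~\ref{remaEquivRegurVariationFunctionSequence}, the step function $\mathcal{L}(x):=L_{\lfloor x\rfloor}$ is then measurable and slowly varying on $(1,\oo)$, and by the Karamata Representation Theorem (see \cite[Th.\ 1.3.1]{bingGoldTeug}) there are $x_0>1$, a measurable $c:[x_0,\oo)\ri(0,\oo)$ with $\lim_{x\ri\oo}c(x)=c\in(0,\oo)$, and a measurable $\ep:[x_0,\oo)\ri\R$ with $\lim_{x\ri\oo}\ep(x)=0$, such that $\mathcal{L}(x)=c(x)\exp\big(\int_{x_0}^x\ep(t)\,dt/t\big)$ for $x\ge x_0$.

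Next I would discretise. Put $n_0:=\lceil x_0\rceil$ and, for $j\ge n_0$, set $\delta_j:=j\int_j^{j+1}\ep(t)\,dt/t$, while $\delta_j:=0$ for $1\le j<n_0$; since $j/t\in[1/2,1]$ on $[j,j+1]$ and $\ep(t)\ri0$, one has $|\delta_j|\le\sup_{t\ge j}|\ep(t)|\cdot j\log(1+1/j)\le\sup_{t\ge j}|\ep(t)|\ri0$, and by construction $\sum_{j=n_0}^{p-1}\delta_j/j=\int_{n_0}^p\ep(t)\,dt/t$ for every integer $p\ge n_0$. Finally I would define $C_p:=s_p\,p^{-\o}\exp\big(-\sum_{j=1}^p\delta_j/j\big)$ for all $p\in\N$; then $C_p>0$ and the identity $s_p=p^{\o}C_p\exp\big(\sum_{j=1}^p\delta_j/j\big)$ holds for every $p$ by construction. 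For $p>x_0$, inserting the Karamata representation together with $\sum_{j=1}^p\delta_j/j=\int_{n_0}^p\ep(t)\,dt/t+\delta_p/p$ collapses $C_p$ to $c(p)\exp\big(\int_{x_0}^{n_0}\ep(t)\,dt/t\big)\exp(-\delta_p/p)$, which converges to $C:=c\exp\big(\int_{x_0}^{n_0}\ep(t)\,dt/t\big)\in(0,\oo)$, completing the proof.

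I expect the only delicate point to be precisely this last piece of bookkeeping: one must split $\int_{x_0}^p$ into unit intervals so as to produce coefficients of the exact form $\delta_j/j$ with $\delta_j\ri0$ (which rests only on the comparison of $1/t$ with $1/j$ on $[j,j+1]$), and then absorb the finitely many low-index terms and the lower boundary contribution $\int_{x_0}^{n_0}\ep(t)\,dt/t$ into the sequence $(C_p)$ without spoiling either $\delta_j\ri0$ or the convergence and strict positivity of $(C_p)$. Should a self-contained argument be preferred, one could instead imitate the proof of the Karamata representation directly: set $\phi_p:=\log L_p$, use the discrete Uniform Convergence Theorem for slowly varying sequences to obtain $\phi_{\lfloor\lambda p\rfloor}-\phi_p\ri0$ uniformly for $\lambda$ in compact subsets of $(0,\oo)$, and smooth $\phi$ by suitable local averages to separate a convergent part from one whose increments tend to zero; reducing to the function case is, however, noticeably shorter.
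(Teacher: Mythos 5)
The paper does not prove this statement at all: it is quoted as an external result of Bojanic and Seneta (\cite{BojanicSeneta}, Th.~3), so there is no internal proof to compare with, and your argument must be judged on its own. It is correct, and it follows essentially the route of the original source: normalise to $L_p=s_p\,p^{-\o}$, which is slowly varying; pass to the step function $L_{\lfloor x\rfloor}$ via the equivalence recalled in Remark~\ref{remaEquivRegurVariationFunctionSequence} (applied with index $0$); invoke the Karamata representation theorem for measurable slowly varying functions; and discretise the integral. The two delicate points are exactly the ones you identify, and both are handled correctly: the estimate $|\delta_j|\le \sup_{t\ge j}|\ep(t)|\cdot j\log(1+1/j)\le\sup_{t\ge j}|\ep(t)|$ gives $\delta_j\ri0$, and the telescoping identity $\sum_{j=n_0}^{p-1}\delta_j/j=\int_{n_0}^{p}\ep(t)\,dt/t$ lets you absorb the finitely many low-index terms, the boundary contribution $\int_{x_0}^{n_0}\ep(t)\,dt/t$, and the vanishing remainder $\delta_p/p$ into $C_p$, which then converges to a finite positive limit while remaining strictly positive for all $p$ by construction. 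No gap.
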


\begin{theo} \label{teocaractsucfrbyregvar}(Characterization Theorem)
Let  $(M_p)_{p\in\N_0}$ be a sequence of positive numbers with $M_0=1$. The following are equivalent:
\begin{itemize}
\item[(i)] $\M$ is strongly regular and $\lim_{p\ri\oo} \log(m_p)/\log(p)=\o(\M)$,
\item[(ii)] $\M$ is (lc) and $\bm$ is regularly varying of index $\o(\M)$, i.e.,
 $$\lim_{p\ri\oo} \frac{m_{\ell p}}{m_p}=\ell^{\o(\M)}, \qquad \ell\geq 2.$$
  %where $(m_p)_{p\in\N_0}$ is the associated sequence of quotients.
\end{itemize}
\end{theo}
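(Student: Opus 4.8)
The plan is to get (i)$\Rightarrow$(ii) for free from an earlier result and to concentrate on the converse. I read (ii) as: $\M$ is (lc) and $\bm$ is regularly varying with a positive index $\o$ (the theorem then identifying $\o$ with $\o(\M)$); by the very formulation in (ii), regular variation is available at least in the concrete form $\lim_{p\to\infty}m_{\ell p}/m_p=\ell^{\o}$ for every integer $\ell\ge2$. The implication (i)$\Rightarrow$(ii) is one line: logarithmic convexity is part of strong regularity, and the rest of (ii) is precisely the conclusion of Proposition~\ref{pro.SRS.implica.RV}.

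For (ii)$\Rightarrow$(i) I would proceed in three steps. First, since $\M$ is (lc), $\bm$ is non-decreasing by Proposition~\ref{propPropiedlcmg}(i); the limits $\lim_{p\to\infty}m_{\ell p}/m_p=\ell^{\o}$ for $\ell=2,3$ then upgrade, via the theorem of L.~de~Haan stated above (note $\log 2/\log 3$ is finite and irrational), to genuine regular variation of $\bm$ of index $\o$, legitimizing the ``i.e.'' in (ii). Second, I would read off strong regularity of $\M$ from Proposition~\ref{alt.def.SRS}(ii): $\bm$ is non-decreasing, the sequence $(m_{2p}/m_p)_p$ converges to $2^{\o}\in(1,\oo)$ and so is bounded (hence $\sup_{p\in\N}m_{2p}/m_p<\oo$), and $\liminf_{p\to\infty}m_{2p}/m_p=2^{\o}>1$ since $\o>0$. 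Consequently $\M$ is strongly regular, and Corollary~\ref{coroOrderM} gives, for this $\M$, $\o(\M)=\liminf_{p\to\infty}\log(m_p)/\log(p)$.

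Third, I would show that $\lim_{p\to\infty}\log(m_p)/\log(p)$ exists and equals $\o$; together with the preceding identity this yields $\o=\o(\M)$ and hence (i). To do so I would feed $\bm$ into the Bojanic--Seneta representation (Theorem~\ref{theoBojanicSeneta}): $m_p=p^{\o}C_p\exp\big(\sum_{j=1}^{p}\delta_j/j\big)$ with $C_p\to C\in(0,\oo)$ and $\delta_p\to0$. Taking logarithms and dividing by $\log(p)$, the term $\log(C_p)/\log(p)$ tends to $0$, and $\big(\sum_{j=1}^{p}\delta_j/j\big)/\log(p)$ tends to $0$ by Stolz's criterion, since with $H_p=\sum_{j=1}^p1/j\sim\log(p)$ the quotient of successive increments equals $\delta_{p+1}\to0$. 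Thus $\log(m_p)/\log(p)\to\o$, completing the argument.

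I do not foresee a substantial obstacle: the proof is essentially a correct stacking of de Haan's theorem, Proposition~\ref{alt.def.SRS}, Corollary~\ref{coroOrderM} and the Bojanic--Seneta representation. The two points where one must be careful are that de Haan's theorem has to be invoked \emph{before} the Bojanic--Seneta representation, because the hypothesis in (ii) (dilations along integers) is formally weaker than regular variation as defined in the paper; and that the positivity of the index, implicit in denoting it $\o(\M)$, is exactly what produces $\liminf_{p\to\infty}m_{2p}/m_p>1$ and thereby unlocks the strong-regularity criterion of Proposition~\ref{alt.def.SRS}.
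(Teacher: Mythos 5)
Your proposal is correct and follows essentially the same route as the paper: (i)$\Rightarrow$(ii) is delegated to Proposition~\ref{pro.SRS.implica.RV}, and (ii)$\Rightarrow$(i) obtains strong regularity from Proposition~\ref{alt.def.SRS}(ii) via $\lim_{p\to\infty}m_{2p}/m_p=2^{\o}>1$ and then extracts $\lim_{p\to\infty}\log(m_p)/\log(p)=\o$ from the Bojanic--Seneta representation. Your explicit invocations of de Haan's theorem (to justify the ``i.e.'' in (ii)) and of Corollary~\ref{coroOrderM} (to identify the index with $\o(\M)$) are careful touches left implicit in the paper, but the argument is the same.
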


\begin{proof}
Proposition~\ref{pro.SRS.implica.RV} shows that (i) implies (ii).\par
For the converse, since
 $$\lim_{p\ri\oo}\frac{m_{2p}}{m_p}=2^{\o(\M)}>1,$$
it is clear that all the conditions in Proposition~\ref{alt.def.SRS}(ii) are satisfied, and so $\M$ is strongly regular. Finally, using
Theorem~\ref{theoBojanicSeneta} we know that
$$m_p=p^{\o(\M)} C_p \exp  \left(\sum^{p}_{k=1}\delta_k/k\right),\quad p\in\N,$$
where $C_p\ri C$ and $\delta_p\ri0$ as $p\ri\oo$. Consequently,
$$\lim_{p\ri\oo} \frac{\log(m_p)}{\log(p)}= \lim_{p\ri\oo}\Big( \o(\M)
+ \frac{\log (C_p)}{\log(p)}+ \frac{1}{\log(p)} \sum^{p}_{k=1} \frac{\delta_k}{k}\Big) =\o(\M), $$
since $\lim_{p\ri\oo} (\log(p))^{-1} \sum^{p}_{k=1} \delta_k/k =0$.
\end{proof}

We now turn to the last aim in this paper, concerning the equality of $\o(\M)$ and the growth index $\ga(\M)$ introduced by V. Thilliez~\cite{thilliez}. It is necessary to recall the connection between regular variation and the notion of almost increasing functions or sequences.

\begin{den}
A function $f$ positive and finite in $[b,\oo)$ is said to be \textit{almost increasing} in $[c,\oo)$ ($c\geq b$) if there
exists a constant $M\geq1$ such that
\begin{equation}
 f(x)\leq M f(y), \qquad \text{for every} \quad y\geq x \geq c,\no
\end{equation}
or, equivalently, if
\begin{equation*}
 f(x)\leq M \inf_{y\geq x} f(y), \qquad \text{for each} \quad x \geq c.
\end{equation*}
%In particular, we say that a
A sequence $(s_p)_{p\in\N_0}$ is \textit{almost increasing} if
%the function $f(x)=\lfloor x\rfloor$ is, i.e,
\begin{equation}
s_p\leq M \inf_{\ell\geq p} s_{\ell}, \qquad \text{for every} \quad p \in \N_0.\no
\end{equation}
\end{den}

The next proposition is due to J. Karamata~\cite{karamata1,karamata2} for continuous functions, but it also holds for measurable ones.
%The proof in the measurable case can be found in the E. Seneta's book~\cite{seneta}.

\begin{pro}[\cite{seneta},\ Section\ 1.5]\label{propKaramataRV}
Let $F$ be a regularly varying function $F:(a,\oo)\ri(0,\oo)$ (where $a>0$) with index $\ro$. Then, for every $\sigma<\ro$ one has
$$\lim_{x\ri\oo} \frac{\inf_{y\geq x} (y^{-\sigma} F(y))}{ x^{-\sigma} F(x)} =1,
\qquad \lim_{x\ri\oo} \frac{\sup_{a\leq y\leq x} (y^{-\sigma} F(y))}{ x^{-\sigma} F(x)} =1,  $$
and for every $\ro<\tau$ one has
$$\lim_{x\ri\oo} \frac{\sup_{y\geq x} (y^{-\tau} F(y))}{ x^{-\tau} F(x)} =1,
\qquad \lim_{x\ri\oo} \frac{\inf_{a\leq y\leq x} (y^{-\tau} F(y))}{ x^{-\tau} F(x)} =1. $$
\end{pro}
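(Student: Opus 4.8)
The plan is to deduce the four limits from a single structural fact: a measurable regularly varying function with nonzero index is asymptotically equivalent to a monotone function, after which each assertion follows from a short $\ep$--argument. First I would reduce to studying $g(x):=x^{-\sigma}F(x)$ for the first two limits and $g(x):=x^{-\tau}F(x)$ for the last two. Since $x\mapsto x^{-\sigma}$ is regularly varying of index $-\sigma$ and, directly from~\eqref{equation.def.RV}, a product of regularly varying functions is regularly varying with index equal to the sum of the indices, $g$ is regularly varying of index $\beta=\ro-\sigma>0$ in the first case and $\beta=\ro-\tau<0$ in the second.

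Next I would invoke the Representation Theorem for measurable regularly varying functions (see \cite{seneta}, \cite{bingGoldTeug}): there are $x_0>a$, a measurable $c\colon[x_0,\oo)\ri(0,\oo)$ with $c(x)\ri c\in(0,\oo)$, and a measurable $\zeta$ with $\zeta(t)\ri\beta$ as $t\ri\oo$, such that $g(x)=c(x)h(x)$ for $x\ge x_0$, where $h(x):=\exp\big(\int_{x_0}^x\zeta(t)/t\,dt\big)$. I would also record the standard consequence of the Uniform Convergence Theorem that, after possibly enlarging $x_0$, $g$ is bounded above and bounded below away from $0$ on every compact subinterval of $[x_0,\oo)$. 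Because $\zeta(t)\ri\beta\neq0$, there is $x_1\ge x_0$ on whose complement $[x_1,\oo)$ the function $\zeta$ has constant sign, so $h$ is nondecreasing on $[x_1,\oo)$ with $h(x)\ri\oo$ when $\beta>0$, and nonincreasing with $h(x)\ri0$ when $\beta<0$.

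Now fix $\ep\in(0,1)$ and enlarge $x_1$ so that $(1-\ep)c\le c(y)\le(1+\ep)c$ whenever $y\ge x_1$. For the two ``tail'' statements (the infimum over $y\ge x$ when $\beta>0$, the supremum over $y\ge x$ when $\beta<0$) I would argue directly: for $y\ge x\ge x_1$ the monotonicity of $h$ together with this two--sided bound on $c$ forces $g(y)/g(x)$ to lie in $[(1-\ep)/(1+\ep),(1+\ep)/(1-\ep)]$, and since the relevant extremum is trivially on the correct side of $g(x)$, the quotient of the extremum by $g(x)$ is squeezed between $1$ and $(1+\ep)/(1-\ep)$ (or between $(1-\ep)/(1+\ep)$ and $1$). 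For the two ``head'' statements (supremum, resp. infimum, over $a\le y\le x$) I would split the range as $[a,x_1]\cup[x_1,x]$: on $[x_1,x]$ the same monotonicity--plus--squeeze estimate applies, while on the fixed compact set $[a,x_1]$ the extremum of $g$ is a finite positive constant (by local boundedness; in the application $F$ is even continuous), which becomes irrelevant once $x$ is large enough, because $g(x)=c(x)h(x)$ tends to $\oo$ in the supremum case ($\beta>0$) and to $0$ in the infimum case ($\beta<0$). Letting $\ep\ri0^{+}$ in each of the four cases yields the asserted limits.

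The main obstacle, modest as it is, is exactly this last point for the extrema over the bounded range $[a,x]$: one must see that the contribution of the fixed initial segment cannot spoil the limit, and this is where local boundedness of measurable regularly varying functions (above, for the supremum; below away from $0$, for the infimum) and the divergence (resp. vanishing) of the monotone factor $h$ are genuinely used. After the monotone representative $h$ has been extracted from the representation, everything else is routine bookkeeping with $\ep$.
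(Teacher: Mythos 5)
The paper offers no proof of this proposition: it is quoted from Seneta's book (Section 1.5), so there is no in-paper argument to measure yours against. Your reconstruction is the standard one (and essentially Seneta's own): pass to $g(x)=x^{-\sigma}F(x)$, regularly varying of index $\beta=\rho-\sigma\neq 0$, invoke the Karamata Representation Theorem to write $g(x)=c(x)\exp\big(\int_{x_0}^{x}\zeta(t)t^{-1}\,dt\big)$ with $c(x)\to c\in(0,\infty)$ and $\zeta(t)\to\beta$, extract an eventually monotone factor $h$ tending to $\infty$ or to $0$ according to the sign of $\beta$, and squeeze. The logic is correct. Two remarks. First, your claim that for $y\ge x\ge x_1$ the ratio $g(y)/g(x)$ lies in $[(1-\varepsilon)/(1+\varepsilon),(1+\varepsilon)/(1-\varepsilon)]$ is false as written when $\beta>0$, since $h(y)/h(x)$ is unbounded above on the tail; only the one-sided bound $g(y)\ge\frac{1-\varepsilon}{1+\varepsilon}\,g(x)$ holds, and that is all your next sentence actually uses, so this is a slip of wording rather than of substance. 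Second, you correctly identify that the two ``head'' limits over $a\le y\le x$ require $g$ to be bounded above (respectively, bounded below away from zero) on the fixed initial segment $[a,x_1]$; the Uniform Convergence Theorem only supplies such bounds on $[x_0,\infty)$ for some large $x_0$, so for a general measurable $F$ on $(a,\infty)$ this is a genuine additional hypothesis (and indeed the statement's $\sup_{a\le y\le x}$ already presumes control of $F$ down to $a$). In the one place the paper applies the proposition, namely Proposition~\ref{pro.RV.implica.almostincreasing}, the function $F$ is assumed non-decreasing and locally bounded on $[a,\infty)$, which provides exactly the compact-segment bounds you need, so the caveat is harmless in context.
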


The following proposition shows the aforementioned connection. We include the proof for the sake of completeness.% between regularly varying and almost increasing functions.

\begin{pro}\label{pro.RV.implica.almostincreasing}
Let $F$ be a regularly varying function $F:[a,\oo)\ri(0,\oo)$ (where $a>0$), with index $\ro$. We also suppose that $F$ is non-decreasing and
locally bounded. Then, for each $\sigma<\ro$ the function
$x^{-\sigma}F(x)$ is almost increasing in $[a,\oo)$.
\end{pro}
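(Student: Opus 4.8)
The plan is to use Proposition~\ref{propKaramataRV} directly. Fix $\sigma<\ro$. That proposition (its first limit) tells us that
$$
\lim_{x\ri\oo} \frac{\inf_{y\geq x}\big(y^{-\sigma}F(y)\big)}{x^{-\sigma}F(x)}=1 .
$$
In particular the quotient on the left is bounded away from $0$ for large $x$: there exist $x_0\geq a$ and a constant $c\in(0,1]$ such that for every $x\geq x_0$,
$$
\inf_{y\geq x}\big(y^{-\sigma}F(y)\big)\geq c\, x^{-\sigma}F(x),
$$
which says exactly that $x^{-\sigma}F(x)\leq \tfrac1c\,\inf_{y\geq x}\big(y^{-\sigma}F(y)\big)$ on $[x_0,\oo)$, i.e. $x^{-\sigma}F(x)$ is almost increasing on $[x_0,\oo)$ with constant $1/c$.

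The remaining work is to pass from ``almost increasing on $[x_0,\oo)$'' to ``almost increasing on $[a,\oo)$'', which is where the hypotheses that $F$ is non-decreasing and locally bounded come in. On the compact interval $[a,x_0]$ the function $x^{-\sigma}F(x)$ is bounded: since $F$ is locally bounded and non-decreasing, $F(x)\leq F(x_0)$ there, and $x^{-\sigma}$ is bounded on $[a,x_0]$ (bounded above if $\sigma\geq 0$, and if $\sigma<0$ then $x^{-\sigma}\leq x_0^{-\sigma}$), so there is $K>0$ with $x^{-\sigma}F(x)\leq K$ for $x\in[a,x_0]$. On the other hand $x^{-\sigma}F(x)$ is bounded below by a positive constant on all of $[a,\oo)$: for $\sigma\le 0$ this is clear since $F$ is non-decreasing and $x^{-\sigma}\ge a^{-\sigma}$, and for $0<\sigma<\ro$ one uses that $x^{-\sigma}F(x)\to\oo$ (as $F$ has index $\ro>\sigma$) together with local boundedness and positivity of $F$ to get a positive lower bound $m>0$ on $[a,\oo)$. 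Hence for $x\in[a,x_0]$ and any $y\geq x$: if $y\geq x_0$ then $y^{-\sigma}F(y)\ge m\ge (m/K)\,x^{-\sigma}F(x)$, and if $y\in[x,x_0]$ then again $y^{-\sigma}F(y)\ge m\ge (m/K)\,x^{-\sigma}F(x)$; combining with the bound already obtained for $x\geq x_0$ yields a single constant $M=\max\{1/c,\,K/m\}\geq 1$ that works for all $x\geq a$.

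I do not expect a serious obstacle here: the heart of the statement is contained in Karamata's result (Proposition~\ref{propKaramataRV}), and the only genuinely new content is the bookkeeping on the initial compact interval $[a,x_0]$, which is routine once one notes that the hypotheses ``non-decreasing'' and ``locally bounded'' guarantee both an upper bound for $x^{-\sigma}F(x)$ on $[a,x_0]$ and a uniform positive lower bound on $[a,\oo)$. The mildly delicate point to keep straight is the sign of $\sigma$ when estimating $x^{-\sigma}$ on the compact interval, but in either case the estimates are elementary. Thus the proof will be short: invoke Proposition~\ref{propKaramataRV} to get the almost-increasing property past some $x_0$, then absorb the finite interval $[a,x_0]$ using monotonicity, local boundedness and positivity of $F$.
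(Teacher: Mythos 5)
Your proof is correct and follows essentially the same route as the paper: Karamata's Proposition~\ref{propKaramataRV} gives the almost-increasing property on $[x_0,\oo)$, and the compact interval $[a,x_0]$ is absorbed via elementary upper and lower bounds obtained from monotonicity and local boundedness of $F$. The only cosmetic difference is that you produce a global positive lower bound for $x^{-\sigma}F(x)$ (note it is really the monotonicity $F\ge F(a)>0$, not positivity alone, that yields the uniform lower bound on compacts), whereas the paper chains the estimate through the single point $x_0$; both are sound.
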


\begin{proof}
 We fix $\sigma<\ro$. From the first limit in Proposition~\ref{propKaramataRV},
% $$\lim_{x\ri\oo} \frac{\inf_{y\geq x} (y^{-\sigma} F(y))}{ x^{-\sigma} F(x)} =1,$$
given $\ep\in(0,1)$ there exists $x_0>a$ such that
$${ x^{-\sigma} F(x)}\leq \frac{1}{1-\ep} \inf_{y\geq x} (y^{-\sigma} F(y)), \qquad x\geq x_0.  $$
We define $M_1:=1/(1-\ep)>1$. On the other hand, since $F$ is increasing and locally bounded, there exist $c,C>0$ such that $F(x)\in[c,C]$ for every $x\in[a,x_0]$. Consequently,
$$ \frac{y^\sigma F(x)}{F(y) x^\sigma } \leq \frac{(x_0)^\sigma C}{c a^\sigma}=:M_2,\qquad y,x\in[a,x_0].$$
We consider $M:=M_1M_2$, then whenever $x\in[a,x_0]$ we have that
$$
 \frac{F(x)}{x^\sigma}\leq M_2 \frac{F(y)}{y^\sigma} \leq M \frac{F(y)}{y^\sigma}, \qquad y\in[x,x_0],
 $$
 and
 $$
  \frac{F(x)}{x^\sigma}\leq M_2 \frac{F(x_0)}{x_0^\sigma} \leq M \inf_{y\geq x_0} \frac{F(y)}{y^\sigma},
  $$
so we deduce that $F$ is almost increasing in $[a,\oo)$
\end{proof}

We recall the definition of the growth index $\gamma(\M)$ given  by V. Thilliez

\begin{defi}
Let $\bM=(M_{p})_{p\in\N_{0}}$ be a strongly regular sequence and $\ga>0$. We say $\bM$ satisfies property
$\left(P_{\ga}\right)$  if there exist a sequence of real numbers $m'=(m'_{p})_{p\in\N_0}$ and a
constant $a\ge1$ such that: (i) $a^{-1}m_{p}\le m'_{p}\le am_{p}$, $p\in\N$, and (ii) $\left((p+1)^{-\ga}m'_{p}\right)_{p\in\N_0}$
is increasing.

The \textit{growth index} of $\bM$ is
$$\ga(\bM):=\sup\{\ga\in\R:(P_{\ga})\hbox{ is fulfilled}\}\in(0,\infty).$$
\end{defi}

\begin{prop}\label{pro.pgamma.almostincreasing}
Let $\bM=(M_{p})_{p\in\N_{0}}$ be a strongly regular sequence and $\ga>0$. Then, $\bM$ satisfies property
$\left(P_{\ga}\right)$  if, and only if, the sequence $\left((p+1)^{-\ga}m_{p}\right)_{p\in\N_0}$ is almost increasing.
\end{prop}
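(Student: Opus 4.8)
The plan is to prove the two implications of the equivalence separately; both arguments are elementary, and the only one requiring an idea is the explicit construction of the auxiliary sequence $\bm'$ in the ``if'' direction. \textbf{Suppose first that $(P_\ga)$ holds}, with $\bm'$ and $a\ge1$ as in the definition, and put $u_p:=(p+1)^{-\ga}m'_p$, which is nondecreasing by condition (ii). A nondecreasing sequence is trivially almost increasing, so it suffices to transfer this property to $((p+1)^{-\ga}m_p)_p$ by means of condition (i): since $a^{-1}m_p\le m'_p\le a\,m_p$ for $p\in\N$, for every $\ell\ge p\ge1$ we get
\[
(p+1)^{-\ga}m_p\le a\,u_p\le a\,u_\ell\le a^2(\ell+1)^{-\ga}m_\ell .
\]
The only thing left is the index $p=0$: from the case $p=1$ just obtained together with $m_0\le m_1$ (true because $\M$ is $(\mathrm{lc})$, hence $\bm$ is nondecreasing by Proposition~\ref{propPropiedlcmg}), one has $(\ell+1)^{-\ga}m_\ell\ge a^{-2}2^{-\ga}m_0$ for $\ell\ge1$, whence $(0+1)^{-\ga}m_0\le 2^\ga a^2(\ell+1)^{-\ga}m_\ell$ for all $\ell\ge0$. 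Thus $((p+1)^{-\ga}m_p)_p$ is almost increasing with constant $2^\ga a^2$.

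\textbf{Conversely, assume $((p+1)^{-\ga}m_p)_p$ is almost increasing} with constant $M\ge1$, i.e. $(p+1)^{-\ga}m_p\le M(\ell+1)^{-\ga}m_\ell$ whenever $\ell\ge p$. I would then set
\[
m'_p:=(p+1)^{\ga}\,\inf_{\ell\ge p}\bigl((\ell+1)^{-\ga}m_\ell\bigr),\qquad p\in\N_0,
\]
which is precisely the largest sequence dominated by $\bm$ whose $\ga$-normalization is nondecreasing. The almost-increasing inequality bounds the infimum below by $(p+1)^{-\ga}m_p/M>0$, so $m'_p$ is a well-defined positive real with $m'_p\ge M^{-1}m_p$, while taking $\ell=p$ inside the infimum gives $m'_p\le m_p$; hence $a^{-1}m_p\le m'_p\le a\,m_p$ with $a:=M$, which is (i). Finally $(p+1)^{-\ga}m'_p=\inf_{\ell\ge p}((\ell+1)^{-\ga}m_\ell)$ is nondecreasing in $p$, because the infimum is taken over a family of positive numbers that shrinks as $p$ grows; this is (ii). (If ``increasing'' in the definition of $(P_\ga)$ is to be read strictly, replacing $m'_p$ by $(2-2^{-p})m'_p$ keeps (i) with $a=2M$ and turns the normalized sequence into a strictly increasing one.)

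\textbf{As for the main obstacle}: there is no genuinely hard step, the proof being a direct unwinding of the definitions --- in fact strong regularity enters only through the monotonicity of $\bm$. The two points that call for a little care are guessing the right candidate $\bm'$ in the converse, which is forced to be the pointwise infimum written above, and the bookkeeping for the boundary index $p=0$ together with the positivity of that infimum, both of which are supplied by the almost-increasing inequality itself.
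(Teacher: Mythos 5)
Your proof is correct and follows essentially the same route as the paper's: the same chain of inequalities with constant $a^2$ for the forward implication, and the same candidate $m'_p=(p+1)^{\ga}\inf_{\ell\ge p}\bigl((\ell+1)^{-\ga}m_\ell\bigr)$ for the converse. Your extra care with the index $p=0$ and with the strict/non-strict reading of ``increasing'' are minor refinements the paper glosses over, not a different argument.
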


\begin{proof}
If $\M$ satisfies $\left(P_{\ga}\right)$ with $m'=(m'_{p})_{p\in\N_0}$ and a
constant $a\ge1$, and we take $\ell,p\in\N_0$ with $\ell\geq p$, we have
$$(p+1)^{-\ga}m_{p}\leq a (p+1)^{-\ga} m'_p \leq a (\ell+1)^{-\ga} m'_{\ell} \leq a^2 (\ell+1)^{-\ga} m_{\ell},$$
so $\left((p+1)^{-\ga}m_{p}\right)_{p\in\N_0}$ is almost increasing.\par
\noindent Conversely, if
 $\left((p+1)^{-\ga}m_{p}\right)_{p\in\N_0}$ is almost increasing, there exists $a\geq1$ such that
  $$(p+1)^{-\ga}m_{p} \leq a (\ell+1)^{-\ga}m_{\ell},\qquad \ell\geq p.  $$
 We define $m'_p:=(p+1)^{\ga} \inf_{\ell\geq p}  (\ell+1)^{-\ga} m_{\ell}$. We have:
 \begin{enumerate}[(i)]
  \item For every $p\in\N$,
  $$a^{-1}m_{p} \le (p+1)^{\ga} \inf_{\ell\geq p} (\ell+1)^{-\ga} m_{\ell}=  m'_p \le (p+1)^{\ga} (p+1)^{-\ga} m_p \le  a m_p.$$
  \item For every $\ell,p\in\N$ with  $\ell\geq p$,
  $$(p+1)^{-\ga}m'_{p}= \inf_{q\geq p}[ (q+1)^{-\ga} m_q] \le (\ell+1)^{-\ga} (\ell+1)^{\ga}  \inf_{q\geq \ell} [(q+1)^{-\ga} m_q ] =(\ell+1)^{-\ga}m'_{\ell}.$$
 \end{enumerate}
Then, $\M$ satisfies  $\left(P_{\ga}\right)$.
 \end{proof}

\begin{rema}\label{remaGammaMAlmostIncreasing}
The result above shows that the growth index can also be defined as
$$\ga(\bM):=\sup\{\ga\in\R: \hbox{the sequence} \left((p+1)^{-\ga}m_{p}\right)_{p\in\N_0} \hbox{ is almost increasing}\}.$$
\end{rema}

The following theorem was proved by V. Thilliez \cite{thilliez} and, subsequently, by A. Lastra and the second author \cite{lastrasanz1}.

\begin{theo}[\cite{thilliez}]
Let $0<\ga<\ga(\M)$.
Then, the class $\mathcal{\tilde{A}}_{\M}(S_{\gamma})$ is not quasianalytic.
\end{theo}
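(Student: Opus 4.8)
The plan is to deduce this statement from the characterization of quasianalyticity already established in Theorem~\ref{theo.partial.version.general.Watson.Lemma.following.Mandelbrojt}. Part (ii) of that theorem asserts that $\tilde{\mathcal{A}}_{\M}(G_\ga)$ is not quasianalytic whenever $\ga<1/\lambda_{(m_p)}$, and by Corollary~\ref{coroOrderM} one has $1/\lambda_{(m_p)}=\o(\M)$. Since a sector is a sectorial region, $S_\ga$ is an admissible instance of $G_\ga$ (bisected by $d=0$, with opening $\ga\pi$). Hence the whole argument reduces to the elementary inequality $\ga(\M)\le\o(\M)$: together with the hypothesis $0<\ga<\ga(\M)$ it gives $\ga<\o(\M)=1/\lambda_{(m_p)}$, and the quoted theorem then provides a nontrivial flat function in $\tilde{\mathcal{A}}_{\M}(S_\ga)$ --- its proof in fact constructs such a function directly on the full sector $S_\ga$, via a conformal change of variable and Mandelbrojt's generalization of Watson's Lemma --- so the class is not quasianalytic.

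First I would prove $\ga(\M)\le\o(\M)$ using the reformulation of the growth index in Remark~\ref{remaGammaMAlmostIncreasing}, namely $\ga(\M)=\sup\{\ga\in\R:\ ((p+1)^{-\ga}m_p)_{p\in\N_0}\ \text{is almost increasing}\}$. Fix $\ga$ belonging to this set and let $C\ge1$ be the corresponding constant, so that $(p+1)^{-\ga}m_p\le C\inf_{\ell\ge p}((\ell+1)^{-\ga}m_\ell)$ for every $p$. Specializing to $p=0$ gives $(\ell+1)^{-\ga}m_\ell\ge m_0/C>0$ for all $\ell$, that is $m_\ell\ge (m_0/C)(\ell+1)^{\ga}$. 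Taking logarithms and dividing by $\log(\ell)$, the right-hand side tends to $\ga$ as $\ell\to\oo$, whence $\o(\M)=\liminf_{\ell\to\oo}\log(m_\ell)/\log(\ell)\ge\ga$. Passing to the supremum over all such $\ga$ yields $\ga(\M)\le\o(\M)$. (Here I use, as in the quoted results, that for a strongly regular $\M$ the sequence $\bm$ is nondecreasing and tends to infinity, so that $\lambda_{(m_p)}$ and $\o(\M)$ make sense, which is Proposition~\ref{propPropiedlcmg}(i) and Corollary~\ref{coroOrderM}.)

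Combining the two steps completes the proof, and I do not anticipate any genuine obstacle: everything rests on Theorem~\ref{theo.partial.version.general.Watson.Lemma.following.Mandelbrojt} and Corollary~\ref{coroOrderM}, which are already available, together with the short monotonicity computation above. The only point requiring a little care is extracting the polynomial lower bound $m_\ell\ge (m_0/C)(\ell+1)^{\ga}$ from the almost-increasing hypothesis on $((p+1)^{-\ga}m_p)_{p\in\N_0}$, and even that is routine. By contrast, V.~Thilliez's original proof in \cite{thilliez} establishes non-quasianalyticity by an explicit construction of flat functions adapted to $\M$ through property $(P_\ga)$; the route sketched here simply exploits the machinery developed earlier in the paper to bypass that construction.
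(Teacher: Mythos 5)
Your argument is correct, but it is a genuinely different route from the paper's: the paper does not prove this theorem at all, it simply imports it from V.~Thilliez \cite{thilliez} (and \cite{lastrasanz1}), where non-quasianalyticity is obtained by an explicit construction of flat functions adapted to $\M$ via property $(P_{\ga})$; the paper then uses the imported theorem to \emph{deduce} the inequality $\o(\M)\ge\ga(\M)$ (Proposition~\ref{pro.gamma.menor.omega}). You invert this logical order: you prove $\ga(\M)\le\o(\M)$ directly from the almost-increasing reformulation of $(P_\ga)$ (Remark~\ref{remaGammaMAlmostIncreasing}, itself resting only on the self-contained Proposition~\ref{pro.pgamma.almostincreasing}), via the specialization $p=0$ giving $m_\ell\ge (m_0/C)(\ell+1)^{\ga}$ and hence $\liminf_\ell \log(m_\ell)/\log(\ell)\ge\ga$, and then invoke Theorem~\ref{theo.partial.version.general.Watson.Lemma.following.Mandelbrojt}(ii) together with Corollary~\ref{coroOrderM} ($\o(\M)=1/\lambda_{(m_p)}$). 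I checked the dependency chain: neither Corollary~\ref{coroOrderM} nor Proposition~\ref{pro.pgamma.almostincreasing} relies on the Thilliez theorem, so there is no circularity, and the flat function produced in the proof of Theorem~\ref{theo.partial.version.general.Watson.Lemma.following.Mandelbrojt}(ii) does live on the full sector $S_\ga$, so restricting to sectors rather than general sectorial regions causes no loss. What your approach buys is a self-contained proof within the paper's Mandelbrojt-based framework that bypasses Thilliez's construction entirely and makes Proposition~\ref{pro.gamma.menor.omega} a byproduct rather than a consequence; what it does not give is the explicit flat function with the quantitative control that Thilliez's construction provides, which is what \cite{thilliez} actually needs for its extension and division results.
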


From this result, and by the very definition of $\omega(\M)$, the following holds.

\begin{prop}[\cite{SanzFlat},\ Prop.\ 3.7]\label{pro.gamma.menor.omega}
For any strongly regular sequence $\M$ one has $\omega(\M)\ge \gamma(\M)$.
\end{prop}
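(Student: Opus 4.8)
The statement is an immediate consequence of results already assembled in the paper, so the plan is to combine them carefully rather than to develop anything new; I will indicate two closely related arguments.

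\emph{First approach (via Thilliez's theorem).} I would argue by contradiction. Suppose $\gamma(\M)>\omega(\M)$ and fix $\gamma$ with $\omega(\M)<\gamma<\gamma(\M)$. On the one hand, since $0<\gamma<\gamma(\M)$, the theorem of V. Thilliez stated just above guarantees that $\tilde{\mathcal{A}}_{\M}(S_\gamma)$ is not quasianalytic. On the other hand, $S_\gamma$ is a sectorial region of opening $\pi\gamma$ bisected by $d=0$, and $\gamma>\omega(\M)=1/\lambda_{(m_p)}$ by Corollary~\ref{coroOrderM}; hence Theorem~\ref{theo.partial.version.general.Watson.Lemma.following.Mandelbrojt}(i) --- equivalently, the very definition of $\omega(\M)$ together with the fact that quasianalyticity of $\tilde{\mathcal{A}}_{\M}(G_\gamma)$ depends only on the opening --- shows that $\tilde{\mathcal{A}}_{\M}(S_\gamma)$ \emph{is} quasianalytic. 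This contradiction forces $\gamma(\M)\le\omega(\M)$.

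\emph{Second approach (elementary, self-contained).} Alternatively, I would bypass quasianalyticity altogether and work directly with the sequence of quotients, using the reformulation of the growth index in Remark~\ref{remaGammaMAlmostIncreasing}. Let $0<\gamma<\gamma(\M)$. Then the sequence $\left((p+1)^{-\gamma}m_p\right)_{p\in\N_0}$ is almost increasing, so there is a constant $a\ge 1$ with $(p+1)^{-\gamma}m_p\le a\,(q+1)^{-\gamma}m_q$ whenever $q\ge p$. Fixing $p_0\in\N$ and letting $p\ge p_0$ vary, this yields $m_p\ge a^{-1}m_{p_0}(p_0+1)^{-\gamma}(p+1)^{\gamma}$, hence $\log(m_p)/\log(p)\ge\gamma+o(1)$ as $p\to\infty$. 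Therefore $\liminf_{p\to\infty}\log(m_p)/\log(p)\ge\gamma$, and since $\gamma<\gamma(\M)$ was arbitrary, Corollary~\ref{coroOrderM} gives $\omega(\M)=\liminf_{p\to\infty}\log(m_p)/\log(p)\ge\gamma(\M)$.

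Neither argument presents a genuine obstacle. The only point that requires a little care is the treatment of the borderline opening $\gamma=\omega(\M)$ (which is precisely why the inequalities are strict and the index is defined via a supremum/infimum), together with the fact that quasianalyticity of $\tilde{\mathcal{A}}_{\M}(G_\gamma)$ depends only on the opening $\pi\gamma$ and neither on the bisecting direction nor on the precise shape of the sectorial region. I would likely present the first approach as the main proof, since it matches the framing of~\cite{SanzFlat}, and mention the second as a remark.
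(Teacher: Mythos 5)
Your first approach is exactly the paper's proof: the authors simply observe that, for $0<\gamma<\gamma(\M)$, Thilliez's theorem gives a nontrivial flat function in $\tilde{\mathcal{A}}_{\M}(S_\gamma)$, so by the very definition of $\omega(\M)$ as an infimum one gets $\gamma\le\omega(\M)$, and then $\gamma(\M)\le\omega(\M)$ by taking the supremum. Your second, elementary argument is also correct and is a genuinely different route: it bypasses Thilliez's (nontrivial) construction of flat functions entirely, using only the almost-increasing reformulation of $(P_\gamma)$ from Remark~\ref{remaGammaMAlmostIncreasing} (together with the easy fact that $(P_\gamma)$ implies $(P_{\gamma'})$ for $\gamma'<\gamma$) and the formula $\omega(\M)=\liminf_{p\to\infty}\log(m_p)/\log(p)$ of Corollary~\ref{coroOrderM}; what it buys is a short, self-contained proof at the level of the quotient sequence, at the cost of not reflecting the quasianalyticity interpretation that motivates the definition of $\omega(\M)$ in the first place.
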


%Applying the previous results to the function $f(x)=m_{\lfloor x\rfloor}$ we have the following theorem.
We are ready to prove our last statement.
%Corolario gama=ommega

\begin{theo}
 Let $\M$ be a strongly regular sequence such that $\lim_{p\ri\oo} \log(m_p)/\log(p)=\o(\M)$, then $\o(\M)=\gamma(\M)$.
\end{theo}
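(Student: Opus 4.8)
The plan is to combine the inequality $\gamma(\M)\le\omega(\M)$, already available from Proposition~\ref{pro.gamma.menor.omega}, with the reverse inequality $\gamma(\M)\ge\omega(\M)$; for the latter I would use the characterization of the growth index through almost increasing sequences given in Remark~\ref{remaGammaMAlmostIncreasing}, together with the fact that, under the present hypothesis, $\bm$ is regularly varying.

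First, by Theorem~\ref{teocaractsucfrbyregvar} the assumption $\lim_{p\ri\oo}\log(m_p)/\log(p)=\o(\M)$ forces $\bm$ to be regularly varying of index $\o(\M)$, so by Remark~\ref{remaEquivRegurVariationFunctionSequence} the function $F(x):=m_{\lfloor x\rfloor}$ is regularly varying on $[1,\oo)$ with the same index. Since $\M$ is (lc), $\bm$ is non-decreasing (Proposition~\ref{propPropiedlcmg}(i)), hence $F$ is non-decreasing, and $F$ is plainly locally bounded. Now fix $\gamma$ with $0<\gamma<\o(\M)$ and pick $\sigma$ with $\gamma<\sigma<\o(\M)$. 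Proposition~\ref{pro.RV.implica.almostincreasing} then applies and shows that $x^{-\sigma}F(x)$ is almost increasing on $[1,\oo)$; evaluating at integer arguments, there is $M\ge1$ with
$$
p^{-\sigma}m_p\le M\,\ell^{-\sigma}m_\ell,\qquad \ell\ge p\ge1.
$$

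The remaining step is a routine estimate converting this into the almost-monotonicity of $\big((p+1)^{-\gamma}m_p\big)_{p\in\N_0}$. For integers $\ell\ge p\ge1$ I would write
$$
(p+1)^{-\gamma}m_p=(p+1)^{-\gamma}p^{\sigma}\big(p^{-\sigma}m_p\big)\le M\,(p+1)^{-\gamma}p^{\sigma}\ell^{-\sigma}(\ell+1)^{\gamma}\big((\ell+1)^{-\gamma}m_\ell\big),
$$
and then bound the factor $(p+1)^{-\gamma}p^{\sigma}\ell^{-\sigma}(\ell+1)^{\gamma}$ by $p^{-\gamma}p^{\sigma}\cdot 2^{\gamma}\ell^{\gamma}\ell^{-\sigma}=2^{\gamma}(p/\ell)^{\sigma-\gamma}\le 2^{\gamma}$, using $\gamma>0$, $\sigma>\gamma$ and $p\le\ell$. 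Thus $(p+1)^{-\gamma}m_p\le 2^{\gamma}M(\ell+1)^{-\gamma}m_\ell$ for all integers $\ell\ge p\ge1$. Moreover $(\ell+1)^{-\gamma}m_\ell\to\oo$ (because $\gamma<\o(\M)=\liminf_\ell\log(m_\ell)/\log(\ell)$), so $\inf_{\ell\ge0}(\ell+1)^{-\gamma}m_\ell>0$ and the single term $p=0$ is absorbed by enlarging the constant; hence $\big((p+1)^{-\gamma}m_p\big)_{p\in\N_0}$ is almost increasing. By Remark~\ref{remaGammaMAlmostIncreasing} this yields $\gamma(\M)\ge\gamma$, and letting $\gamma\uparrow\o(\M)$ gives $\gamma(\M)\ge\o(\M)$; together with Proposition~\ref{pro.gamma.menor.omega} we conclude $\gamma(\M)=\o(\M)$.

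I do not expect a genuine obstacle here: once Theorem~\ref{teocaractsucfrbyregvar} and Proposition~\ref{pro.RV.implica.almostincreasing} are in hand, the argument is bookkeeping. The only points needing a little care are the insertion of the intermediate exponent $\sigma\in(\gamma,\o(\M))$, which is precisely what makes the exponent $\sigma-\gamma$ of $p/\ell$ positive, and the harmless handling of the finitely many initial indices.
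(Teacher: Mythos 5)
Your argument is correct and follows essentially the same route as the paper: regular variation of $\bm$ (hence of $x\mapsto m_{\lfloor x\rfloor}$), Proposition~\ref{pro.RV.implica.almostincreasing} to get almost-increasingness of $x^{-\ga}m_{\lfloor x\rfloor}$ for every $\ga<\o(\M)$, then Remark~\ref{remaGammaMAlmostIncreasing} and Proposition~\ref{pro.gamma.menor.omega}. The only difference is that you insert an intermediate exponent $\sigma\in(\ga,\o(\M))$ to pass from $p^{-\sigma}m_p$ to $(p+1)^{-\ga}m_p$; this is harmless but not needed, since $(p+1)^{-\ga}m_p\le p^{-\ga}m_p\le M\ell^{-\ga}m_\ell\le 2^{\ga}M(\ell+1)^{-\ga}m_\ell$ already does the job with the same exponent.
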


\begin{proof}
According to Proposition~\ref{pro.SRS.implica.RV}, $\bm$ is regularly varying, and so is the function $f(x)=m_{\lfloor x\rfloor}$, $x>0$, by Remark~\ref{remaEquivRegurVariationFunctionSequence}. Since $f$ is also non-decreasing and locally bounded, by Proposition~\ref{pro.RV.implica.almostincreasing} the function $x^{-\ga} m_{\lfloor x \rfloor}$ is almost increasing for every $\ga<\o(\M)$.
It follows that the sequence $\left((p+1)^{-\ga}m_{p}\right)_{p\in\N_0}$ is almost increasing, and by
Remark~\ref{remaGammaMAlmostIncreasing} we conclude that $\o(\M)\leq\ga(\M)$. Proposition~\ref{pro.gamma.menor.omega} leads to the conclusion.
\end{proof}

\noindent\textbf{Acknowledgements}: Both authors are partially supported by the Spanish Ministry of Economy and Competitiveness under project MTM2012-31439. The first author is partially supported by the University of Valladolid through a Predoctoral Fellowship (2013 call) co-sponsored by the Banco de Santander.

\vskip.5cm
\noindent Authors' Affiliation:\par\vskip.5cm
%Javier Jim\'enez-Garrido\par
Departamento de \'Algebra, An\'alisis Matem\'atico, Geometr{\'\i}a y Topolog{\'\i}a\par
Instituto de Investigaci\'on en Matem\'aticas de la Universidad de Valladolid, IMUVA\par
Facultad de Ciencias\par
Universidad de Valladolid\par
47011 Valladolid, Spain\par
E-mail: jjjimenez@am.uva.es (Javier Jim\'enez-Garrido), jsanzg@am.uva.es (Javier Sanz).
\par\vskip.5cm

\end{document}